\documentclass[12 pt]{amsart}
\usepackage{amssymb}

\newcommand{\N}{\mathbb N}

\newcommand{\R}{\mathbb R}

\usepackage[margin=2.5cm]{geometry}
\usepackage{amsmath,amsthm,amssymb,amscd}
\usepackage[colorlinks=true, pdfstartview=FitV, linkcolor=blue,
citecolor=red,urlcolor=blue]{hyperref}

\newtheorem{thm}{Theorem}[section]
\newtheorem{corollaire}[thm]{Corollary}
\newtheorem{lem}[thm]{Lemma}
\newtheorem{hyp}[thm]{Assumption}
\newtheorem{prop}[thm]{Proposition}
\theoremstyle{definition}
\newtheorem{definition}[thm]{Definition}
\newtheorem{rem}[thm]{Remark}

\numberwithin{equation}{section}

\def\be#1 {\begin{equation} \label{#1}}
\newcommand{\ee}{\end{equation}}

\def \virg {\, , \,\,}

\newcommand{\BMO}{\textrm{BMO}}
\newcommand\<{\langle}
\renewcommand\>{\rangle}
\let\Re=\relax
\DeclareMathOperator{\Re}{Re}

\def\Xint#1{\mathchoice
   {\XXint\displaystyle\textstyle{#1}}%
   {\XXint\textstyle\scriptstyle{#1}}%
   {\XXint\scriptstyle\scriptscriptstyle{#1}}%
   {\XXint\scriptscriptstyle\scriptscriptstyle{#1}}%
   \!\int}
\def\XXint#1#2#3{{\setbox0=\hbox{$#1{#2#3}{\int}$}
     \vcenter{\hbox{$#2#3$}}\kern-.5\wd0}}

\def\aver#1{\Xint-_{#1}}

\newcommand{\calM}{\ensuremath{\mathcal{M}}}
\newcommand{\diam}{{\mathrm{diam}}}

\title[Dispersion via the heat semigroup]{Dispersive estimates with loss of
derivatives via the heat semigroup and the wave operator}

\author{Fr\'ed\'eric Bernicot}
\address{Fr\'ed\'eric Bernicot - CNRS - Universit\'e de Nantes \\ Laboratoire Jean
Leray \\ 2, rue de la Houssini\`ere
44322 Nantes cedex 3, France}
\email{frederic.bernicot@univ-nantes.fr }
\urladdr{http://www.math.sciences.univ-nantes.fr/~bernicot}

\author{Valentin Samoyeau}
\address{Valentin Samoyeau - Universit\'e de Nantes \\ Laboratoire Jean Leray
\\ 2, rue de la Houssini\`ere
44322 Nantes cedex 3, France}
\email{valentin.samoyeau@univ-nantes.fr}

\thanks{F. Bernicot's research is supported by ANR projects AFoMEN no.
2011-JS01-001-01 and HAB no. ANR-12-BS01-0013.
V. Samoyeau's research is supported by Centre Henri Lebesgue (program
"Investissements d'avenir" --- ANR-11-LABX-0020-01).}


\date{July 15, 2014}

\numberwithin{equation}{section}

\begin{document}

\begin{abstract} 
This paper aims to give a general (possibly compact or noncompact) analog of Strichartz 
inequalities with loss of derivatives, obtained by Burq, G\'erard, and Tzvetkov \cite{BGT} 
and Staffilani and Tataru \cite{ST}. Moreover we present a new approach, relying only on 
the heat semigroup in order to understand the analytic connexion between the heat semigroup 
and  the unitary Schr\"odinger group (both related to a same self-adjoint operator). One of 
the novelty is to forget the endpoint $L^1-L^\infty$ dispersive estimates and to look for a 
weaker $H^1-\BMO$ estimates (Hardy and BMO spaces both adapted to the heat semigroup).
This new point of view allows us to give a general framework (infinite metric spaces, 
Riemannian manifolds with rough metric, manifolds with boundary, \dots) where Strichartz 
inequalities with loss of derivatives can be reduced to microlocalized $L^2-L^2$ dispersive properties. 
We also use the link between the wave propagator and the unitary Schr\"odinger group to 
prove how short time dispersion for waves implies dispersion for the Schr\"odinger group.
\end{abstract}

\keywords{dispersive inequalities; Strichartz estimates; space of homogeneous type; heat semigroup; unitary Schr\"odinger group; Wave operator}

\subjclass[2010]{Primary; 35B30; 35Q55; 42B37; 47D03; 47D06}

\maketitle

\begin{quote}
\footnotesize\tableofcontents
\end{quote}

\section{Introduction}\label{section_introduction}

A powerful tool to study nonlinear Schr\"odinger equations is the family of so called Strichartz estimates. 
Those estimates are useful to control the size of solutions to a linear problem in terms of the size of the 
initial data. The ``size'' notion is usually given by a suitable functional space $L^p_tL^q_x$.
Such inequalities were first introduced by Strichartz in \cite{Strichartz} for Schr\"odinger waves on 
Euclidean space. They were then extended by Ginibre and Velo in \cite{GV} (and the endpoint is due to 
Keel and Tao in \cite{KT}) for the propagator operator
associated with the linear Schr\"odinger equation in $\mathbb R^d$. So for an initial data $u_0$, we 
are interested in controlling $u(t,\ldotp) = e^{it\Delta}u_0$ which is the solution of the linear Schr\"odinger equation:
$$\begin{cases}
   &i\partial_tu + \Delta u=0 \\
   &u_{|t=0}=u_0
  \end{cases}.$$
It is well-known that the unitary group $e^{it\Delta}$ satisfies the following inequality: $$\|e^{it\Delta}u_0\|_{L^pL^q([-T,T] \times \mathbb R^d)} \leq C_T \|u_0\|_{L^2(\mathbb R^d)}$$
for every pair $(p,q)$ of admissible exponents which means : $2\leq p,q \leq \infty$, $(p,q,d)\neq(2,\infty,2)$, and 
\begin{equation}\label{admissible}\tag{$p,q$}
 \frac 2p + \frac dq = \frac d2.
\end{equation}
The Strichartz estimates can be deduced via a $TT^*$ argument from the dispersive estimates 
\begin{equation}\label{disp_L1_Linfty}
 \|e^{it \Delta}u_0\|_{L^{\infty}( \mathbb R^d)} \lesssim |t|^{-\frac d2} \|u_0\|_{L^1(\mathbb R^d)}.
\end{equation}
If $\sup_{T>0}C_T<+\infty$, we will say that a global-in-time Strichartz estimate holds. Such a global-in-time 
estimate has been proved by Strichartz for the flat Laplacian on $\mathbb R^d$ 
while the local-in-time estimate is known in several geometric situation where the manifold is nontrapping 
(asymptotically Euclidean, conic, or hyperbolic, Heisenberg group); see \cite{BT,Bouclet, HTW,ST,BGX,AP} or 
with variable coefficients \cite{RZ,T}. 
The finite volume of the manifold and the presence of trapped geodesics appear to limit the extent to 
which dispersion can occur, see \cite{BGH}. 

The situation for compact manifolds presents a new difficulty, since considering the
constant initial data $u_0= 1\in L^2$ yields a contradiction for large time.
%

Burq, G\'erard, and Tzvetkov \cite{BGT} and Staffilani and Tataru \cite{ST} proved that Strichartz estimates
hold on compact manifolds for finite time if one considers regular data $u_0\in W^{1/p,2}$. Those are 
called ``with a loss of derivatives''. An interesting problem is to determine for specific situations, 
which loss of derivatives is optimal (for example the work of Bourgain \cite{Bourgain} on the flat torus and \cite{TT} of Takaoka and Tzvetkov).

Numerous recent works aim also to obtain such Strichartz estimates with a loss of 
derivatives in various situations, for example corresponding to a Laplacian operator 
on a smooth domain with boundary condition (Dirichlet or Neumann) see the works of 
Anton \cite{Ramona}, Blair-Smith-Sogge \cite{BSS} and Blair-Ford-Herr-Marzuola 
\cite{BFHM}. All these works are built on the approach for compact manifolds of 
\cite{BGT}. Concerning noncompact manifolds, Strichartz estimates with the same loss 
of derivatives have been obtained in \cite{BGT2} by Burq-G\'erard-Tzvetkov for the 
complement of a smooth and bounded domain in the Euclidean space. Let us precise 
that the two approaches \cite{BGT} (for the compact situation) and \cite{BGT2} (for
the non-compact situation) are completely different, although they give exactly the
same loss of derivatives. Indeed in \cite{BGT} the loss of derivatives is due to the
use of the only semi-classical dispersive inequality and in \cite{BGT2} the loss of
derivatives is due to the use of Sobolev embeddings together with the local
smoothing near the boundary. The case of infinite manifolds (with boundary) with one
trapped orbit was considered by Christianson in \cite{Christianson} where a larger
loss of derivatives of $1/p+\varepsilon$ is obtained. There the author allows to
perturb the Laplacian by a smooth potential.

We remark that, by Sobolev embedding, the loss of $2/p$ derivatives is straightforward.
Indeed $W^{\frac 2p,2} \hookrightarrow L^p$ since $d(\frac 12-\frac 1q)=\frac 2p$ so that 
\begin{equation} \|e^{it\Delta}u_0\|_{L^p} \lesssim \|e^{it\Delta}u_0\|_{W^{\frac 2p,2}} \leq \|u_0\|_{W^{\frac 2p,2}} \label{eq:tri} \end{equation}
and taking the $L^q([-T,T])$ norm it comes $$\|e^{it\Delta}u_0\|_{L^pL^q} \leq C_T \|u_0\|_{W^{\frac 2p,2}}.$$

So Strichartz estimates with loss of derivatives are interesting for a loss, smaller than $2/p$.

\bigskip

The purpose of this article is multiple:
\begin{itemize}
 \item To present a general/unified result with loss of derivatives for a (possibly compact or noncompact) general setting
(involving metric spaces with a self-adjoint generator);
 \item To try to understand the link between the heat semigroup and the unitary Schr\"odinger group, through the use 
of corresponding Hardy and BMO spaces. Such spaces allow us to get around the pointwise dispersive estimates and only to consider $L^2-L^2$ localized estimates (in space and in frequency); 
 \item To connect (short time) dispersive properties for Schr\"odinger group with dispersion for the wave propagators.
\end{itemize}

\bigskip

Let us set the general framework of our study.
Let $(X, d, \mu)$ be a metric measured space of homogeneous type. 
That is $d$ is a metric on $X$ and $\mu$ a nonnegative $\sigma$-finite Borel measure satisfying the doubling property: 
$$\forall x \in X, \forall r>0, \, \mu(B(x,2r)) \lesssim \mu(B(x,r)),$$
where $B(x,r)$ denote the open ball with center $x \in X$ and radius $r>0$.
As a consequence, there exists a homogeneous dimension $d>0$, such that
\begin{equation} \label{dd} \forall x \in X \virg \forall r>0 \virg \forall t \geq 1 \virg \mu(B(x,tr))\lesssim t^d \mu(B(x,r)). \end{equation}
Thus we aim our result to apply in numerous cases of metric spaces such as open subsets of $\mathbb R^d$, smooth $d$-manifolds, some fractal sets, Lie groups, Heisenberg group, \dots

Keeping in mind the canonical example of the Laplacian operator in $\mathbb R^d$: $\Delta = \sum_{1\leq j \leq d} \partial_j^2$, we will be more general in the following sense:
we consider a nonnegative, self-adjoint operator $H$ on $L^2=L^2(X,\mu)$ densely defined, which means that its domain 
$$ {\mathcal D}(H):=\{f \in L^2,\, Hf \in L^2\}$$
is supposed to be dense in $L^2$. It is known that $-H$ is the generator of a $L^2$-holomorphic semigroup $(e^{-tH})_{t\geq0}$ 
(see Definition \ref{def_semi-groupe} and \cite{Davies}) and we assume that it satisfies $L^2$ Davies-Gaffney estimates: 
for every $t>0$ and every subsets $E,F \subset X$
\begin{equation}\label{davies_gaffney_estimates}\tag{$DG$}
\|e^{-tH}\|_{L^2(E) \to L^2(F)} \lesssim e^{- \frac{d(E,F)^2}{4t}}
\end{equation}
(with the restriction to $t\lesssim \diam(X)$ if $X$ is bounded).
Without losing generality (up to consider $\lambda H$ for some positive real $\lambda>0$), we assumed 
that $H$ satisfies the previous normalized estimates, which are equivalent to a finite speed propagation 
property at speed $1$ (see later \eqref{speed_propagation}).

We will assume also that the heat semigroup $(e^{-tH})_{t\geq0}$ satisfies the typical upper estimates 
(for a second order operator): that for every $t>0$  the operator $e^{-tH}$ admits a kernel $p_t$ with
\begin{equation}\label{due} \tag{$DU\!E$}
0\leq  p_{t}(x,x)\lesssim
\frac{1}{\mu(B(x,\sqrt{t}))}, \quad \forall~t>0,\,\mbox{a.e. }x\in X.
\end{equation}

It is well-known that such on-diagonal pointwise estimates self-improve into the 
full pointwise Gaussian estimates  (see \cite[Theorem 1.1]{Gr1} or \cite[Section 4.2]{CS} e.g.):
\begin{equation} \label{UE} \tag{$U\!E$}
0\leq p_{t}(x,y)\lesssim
\frac{1}{\mu(B(x,\sqrt{t}))}\exp
\left(-c\frac{d(x,y)^2}{t}\right), \quad \forall~t>0,\, \mbox{a.e. }x,y\in
 X.
\end{equation}




Before we carry on, let us give some examples to point out that \eqref{due} is a quite common estimate:
\begin{itemize}
 \item It is well known that on a Riemannian manifold \cite[Theorem 1.1]{Gr1} or for the Laplacian 
on a subset with boundary conditions \cite{GSC}, under very weak conditions the heat kernel 
satisfies \eqref{due} and so \eqref{UE}. It is also the case for the semigroup generated by a 
self-adjoint elliptic operator of divergence form $L=-div(A \nabla)$ on the Euclidean space 
with a bounded and real valued matrix $A$ (see \cite[Theorem 4]{AT});
 \item If $(X_1, \dots, X_n)$ is a family of vector fields satisfying H\"ormander condition 
and if $H:= - \sum_{i=1}^n X_i^2$, then, in the situation of Lie groups or Riemannian 
manifolds with bounded geometry, the heat semigroup satisfies Gaussian upper-bounds 
\eqref{UE} too (see \cite[Theorem 5.14]{Robinson} and \cite[Section 3, Appendix 1]{CRT});
 \item When one consider an infinite volume Euclidean surface with conic singularities 
with $H$ equals to its Laplacian, then it is proved in \cite[Section 4]{BFHM} that 
the heat kernel satisfies Gaussian pointwise estimates \eqref{UE}.
\end{itemize}

Let us now emphasize why we put so importance on such estimates and on the heat semigroup. 
The considered operator $H$ is self-adjoint and so admits a $C^\infty$-functional calculus, 
which allows us to control $\| \phi(H) \|_{L^p \to L^p}$  for  some regular functions $\phi$. 
Such estimates can be obtained as explained in the Appendix of \cite{IP} as a consequence of 
pointwise Gaussian estimates \eqref{UE} on the heat kernel. 
Moreover, we aim to use some extrapolation techniques (to go from localized $L^2-L^2$ dispersive 
estimates to $L^p-L^{p'}$ estimates) which require local informations, as off-diagonal estimates 
of some functional operators. Such local informations could be transferred from those on the heat 
semigroup to some operators coming from a $C^\infty$-calculus (see \cite{KU} for example). However, it requires to deal with the whole class of $C^\infty$ functions (compactly supported) with suitable norms\ldots 
For an easier readability and a more intrinsic method, we prefer to work only with the semigroup 
and its time derivatives. We refer the reader to Remark \ref{rem:calculus} for the equivalence between the two points of view. 

Moreover, we still keep in mind the following very general/interesting question: what assumptions 
on the heat semigroup $(e^{-tH})_{t\geq0}$ could imply dispersive estimates and Strichartz estimates 
(possibly with a loss of derivatives) for the unitary Schr\"odinger group $(e^{-itH})_{t \in \mathbb R}$ ? 
Such a question is natural, since the application $z \mapsto e^{-zH}$ is holomorphic on 
$\{z\in {\mathbb C}, \ \Re(z)\geq 0\}$. Dispersive information on Schr\"odinger group 
should be connected to some specific properties of the heat kernel. \\

Motivated by this program, we decide to only work with the holomorphic functional 
calculus associated with the operator $H$ and more precisely, we will see that all 
of our study relies on (a sectorial) functional calculus only involving the heat 
semigroup and its time derivatives, and could also be written in terms of a $C^\infty$-calculus, see Remark \ref{rem:calculus}.

\bigskip

In the first part of this work, we investigate this question, allowing loss of derivatives, 
as in \cite{BGT}, proposing a new approach, related to the heat semigroup and the use of 
Hardy-$\BMO$ spaces associated with the semigroup. We point out that our approach gives 
an ``unified'' way to prove Strichartz estimates with loss of derivatives for the compact and noncompact framework.

\bigskip

Let us briefly explain the study of Hardy and $\BMO$ spaces associated with such a heat semigroup.
The classical spaces Hardy space $H^1$ (also called of Coifman-Weiss \cite{CW}, \cite{FS}) and $\BMO$ 
(introduced by John and Nirenberg in \cite{JN}) naturally arises (from a point of view of 
Harmonic Analysis) as a ``limit/extension" of the Lebesgue spaces scale $(L^p)_{1<p<\infty}$ 
when $p \to \infty$ (for $\BMO$) and $p \to 1$ (for $H^1$). Indeed, these two spaces have 
many properties, which are very useful and which fail for the critical spaces $L^1$ and 
$L^\infty$, as Fourier characterization, duality, boundedness of some maximal functions 
or Calder\'on-Zygmund operators, equivalence between several definitions\ldots
Even if $\BMO$ is strictly containing $L^\infty$ and $H^1$ stricly contained in $L^1$, 
these spaces still satisfies a very convenient interpolation results: indeed $H^1$ or 
$\BMO$ interpolates with Lebesgue spaces $L^p$, $1 < p < \infty$ and the intermediate 
spaces are the corresponding intermediate Lebesgue spaces. 

However, there are situations where these spaces $H^1$ and $\BMO$ are not the right 
substitutes to $L^1$ or $L^\infty$ (for example it can be shown that the Riesz 
transform may be not bounded from $H^1$ to $L^1$) and there has been recently 
numerous works whose goal is to define Hardy and BMO spaces adapted to the context 
of a semigroup (see \cite{Aus1, ACDH, Ber, BZ, BZ2, DY1, DY2, HM}). In \cite{Ber, BZ}, 
Bernicot and Zhao have described a very abstract theory for Hardy spaces (built via 
atomic decomposition) and interpolation results with Lebesgue spaces. The main idea 
is to consider the oscillation given by the semigroup instead of the classical oscillation 
involving the average operators. Then these adapted Hardy and $\BMO$ spaces have 
been extensively studied these last years (see the previous references) and it is 
known that interpolation property still holds. We refer the reader to Subsection 
\ref{sectionhardybmo} for the precise definition and refer to the previous citations for more details on this theory.

\bigskip

Still having in mind to connect the heat semigroup with the Schr\"odinger propagator, 
we aim to prove $H^1-\BMO$ dispersive estimates, using these spaces corresponding to 
the heat semigroup. Even if we loose the endpoint ($L^1-L^\infty$ estimate) by 
interpolation, we know that we could at least recover the intermediate $L^p-L^{p'}$ 
dispersive estimates. Moreover, such approach has the advantage that we do not require 
any pointwise estimates on the Schr\"odinger propagators. We first point out that such 
Hardy-$\BMO$ approach have already been used in \cite{MT,Taylor} to obtain dispersive 
estimates, but there the authors considered the classical spaces and not the ones associated with the heat semigroup.

It seems to us that the combination of dispersive estimates and $H^1-\BMO$ spaces 
associated with the heat semigroup is a new problematic. Due to the novelty of such 
approach, we first describe it in a very general setting, by introducing the following 
notion: we say that a $L^2$-bounded operator $T$ satisfies Property \eqref{disp} for some 
integer $m\geq 0$ and constant $A$ (which is intended to be $|t|^{-\frac d2}$ in the 
applications for dispersive estimates), if for every $r>0$ (or $r\lesssim \diam(X)$ if $X$ is bounded)
\begin{equation}\label{disp} \tag{$H_m(A)$}
       \|T \psi_{m}(r^2 H)\|_{L^2(B_r)\to L^2(\widetilde{B_r})} \lesssim A \mu(B_r)^{\frac 12} \mu(\widetilde{B_r})^{\frac12}
  \end{equation}
where $B_r$ and $\widetilde{B_r}$ are any two balls of radius $r$ and $\psi_{m}(x):=x^me^{- x}$.

Under
a uniform lower control of the volume: it exists $\nu>0$ such that
\begin{equation} \label{d}
     r^\nu \lesssim \mu(B(x,r)) \qquad \forall x\in X,r\lesssim \min(1,\diam(X)),
 \end{equation}
we then prove the following:

\begin{thm} \label{thm1} Assume \eqref{dd}, \eqref{d} with \eqref{due}. 
Consider a self-adjoint and $L^2$-bounded operator $T$ (with $\|T\|_{L^2 \to L^2} \lesssim 1$), 
which commutes with $H$ and satisfies Property \eqref{disp} for  some $m \geq \frac d2 $. 
Then $T$ is bounded from $H^1$ to $BMO$ and from $L^p$ to $L^{p'}$ for $p\in(1,2)$ with 
$$ \|T\|_{H^1 \to BMO} \lesssim A \qquad \textrm{and} \qquad \|T\|_{L^p \to L^{p'}} \lesssim A^{\frac{1}{p}-\frac{1}{p'}}$$
if the ambient space $X$ is unbounded and
$$ \|T\|_{H^1 \to BMO} \lesssim \max(A,1) \qquad \textrm{and} \qquad \|T\|_{L^p \to L^{p'}} \lesssim \max(A^{\frac{1}{p}-\frac{1}{p'}},B)$$ 
if the ambient space $X$ is bounded, and where, for the last inequality, we assumed that $\|T\|_{L^p \to L^2}\lesssim B$.
\end{thm}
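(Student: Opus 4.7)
The plan is to reduce the $H^1\to BMO$ statement to a uniform $BMO$-bound on atoms of the Hardy space $H^1$ adapted to the semigroup (as recalled in Subsection \ref{sectionhardybmo}), and then obtain the $L^p\to L^{p'}$ estimates by interpolating this $H^1\to BMO$ bound against the $L^2\to L^2$ bound. The fact that $T$ commutes with $H$ is crucial: it then commutes with every $\phi(H)$ coming from the functional calculus, so we can move oscillation operators freely across $T$.

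An atom has the form $a=\psi_m(r^2H)b$ with $\mathrm{supp}(b)\subset B_0=B(x_0,r)$ and $\|b\|_{L^2}\leq \mu(B_0)^{-1/2}$. For a test ball $B=B(x,s)$ appearing in the $BMO$ norm one has to estimate $\mu(B)^{-1/2}\|T(I-e^{-s^2H})^M\psi_m(r^2H)b\|_{L^2(B)}$. Expanding binomially gives
$$
(I-e^{-s^2H})^M\psi_m(r^2H)=\sum_{k=0}^{M}\binom{M}{k}(-1)^k\Bigl(\tfrac{r^2}{r^2+ks^2}\Bigr)^{m}\psi_m\bigl((r^2+ks^2)H\bigr),
$$
so each piece is a multiple of $T\psi_m(\rho_k^2 H)$ with $\rho_k=\sqrt{r^2+ks^2}$. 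In the regime $s\ll r$, I would instead factor $(1-e^{-s^2x})^M x^m e^{-r^2x}$ as $(s/r)^{2M}\,q_M(s^2x)\psi_m(r^2x)$ with a bounded profile $q_M$, gaining $M$ spare powers of $s/r$; in the regime $s\gg r$ the prefactor $(r/\rho_k)^{2m}$ already provides the decay in $r/s$.

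The hypothesis \eqref{disp} then furnishes, at the scale $\rho_k$, a $L^2$--$L^2$ bound on $T\psi_m(\rho_k^2H)$ localised on pairs of $\rho_k$-balls, and the Davies--Gaffney assumption \eqref{davies_gaffney_estimates} upgrades this to a Gaussian off-diagonal estimate for $\psi_m(\rho_k^2H)$ (via the analytic extension of $e^{-zH}$ and spectral calculus). Decomposing $X$ by annuli around $B_0$ (or around $B$ when $\rho_k>r$) and summing, one obtains
$$
\bigl\|T\psi_m(\rho_k^2H)b\bigr\|_{L^2(B)}\lesssim A\,\mu(B)^{1/2}\mu(B_0)^{1/2}\,e^{-cd(B,B_0)^2/\rho_k^2}\|b\|_{L^2}.
$$
Combining with the normalization of $b$ and summing over $k$, the local $BMO$ contribution is uniformly controlled by $A$, whence $\|T\|_{H^1\to BMO}\lesssim A$.

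For the $L^p\to L^{p'}$ bound with $p\in(1,2)$, I would invoke the abstract interpolation result between $H^1$, $L^p$ and $BMO$ adapted to $H$ (Subsection \ref{sectionhardybmo}), giving
$$
\|T\|_{L^p\to L^{p'}}\lesssim \|T\|_{L^2\to L^2}^{1-\theta}\|T\|_{H^1\to BMO}^{\theta}\lesssim A^{\theta}\qquad \theta=\tfrac1p-\tfrac1{p'}.
$$
In the bounded case, constants do not belong to the atomic Hardy space, so the $BMO$ estimate must absorb the constant component, producing the $\max(A,1)$; the interpolated $L^p\to L^{p'}$ bound then picks up an additive contribution from the finite-dimensional complement, bounded using $\|T\|_{L^p\to L^2}\lesssim B$, which yields $\max(A^{1/p-1/p'},B)$. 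The main obstacle is the estimate preceding interpolation: trading the factors $(r/\rho_k)^{2m}$ and $(s/r)^{2M}$ against the doubling growth of volumes across the annular decomposition. The assumption $m\geq d/2$ is exactly what is needed so that $(r/\rho_k)^{2m}$ compensates the volume ratio $\mu(B_{\rho_k})/\mu(B_r)$ from \eqref{dd}, while the lower bound \eqref{d} prevents any degeneracy at small scales.
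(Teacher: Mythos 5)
Your overall architecture (uniform bound on atoms, then pass to $H^1\to BMO$, then interpolate) matches the paper's, and you correctly identify the crucial role of $[T,H]=0$. But there are two genuine gaps and one misidentification that together would derail the argument.

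First, the atom is misstated. In this paper an atom is $a=B_Q(f_Q)=(1-e^{-r^2H})^M f_Q$ with $f_Q$ supported in $Q$ and $\|f_Q\|_{L^2(Q)}\leq\mu(Q)^{-1/2}$; it is \emph{not} $\psi_m(r^2H)b$. This matters because the oscillation built into the atom is the $M$-fold factor $(1-e^{-r^2H})^M$, and the paper's proof of Theorem \ref{thm_atome_1} specifically exploits the integral representation $(1-e^{-r^2H})^M f=\int_0^{Mr^2} I_M(u)\,H^Me^{-uH}f\,du$ to turn this into a continuous family of $\psi_M(uH)$ operators. Expanding binomially as you propose is possible in principle, but the cancellation structure you need then hinges on the vanishing order of $(1-e^{-r^2x})^M$ at $0$, not on $x^m e^{-r^2x}$; your factorization analysis is therefore set up for the wrong profile. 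Moreover the paper tests $\langle Ta, b\rangle$ against a second atom $b$ (via Remark \ref{rem:BMO}), leading to a symmetric double integral in $(u,v)$ with four terms treated in two regimes; nothing in your proposal reproduces this bookkeeping, and it is where the constraint $m\geq d/2$ actually enters, not in the volume-compensation you describe.

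Second, the intermediate estimate you claim,
$$\bigl\|T\psi_m(\rho_k^2H)b\bigr\|_{L^2(B)}\lesssim A\,\mu(B)^{1/2}\mu(B_0)^{1/2}\,e^{-cd(B,B_0)^2/\rho_k^2}\|b\|_{L^2},$$
cannot be deduced from the hypotheses: Property $(H_m(A))$ provides a bound $A\mu(B_r)^{1/2}\mu(\widetilde{B_r})^{1/2}$ which is \emph{uniform over all pairs of $r$-balls, with no decay in $d(B_r,\widetilde{B_r})$}. Davies--Gaffney gives Gaussian off-diagonal decay for $\psi(\rho_k^2H)$ alone, not for the composition $T\psi_m(\rho_k^2H)$; when you factor $T\psi_m(\rho_k^2H)=cT\psi_m(\rho_k^2H/2)\,e^{-\rho_k^2H/2}$ and sum over annuli, the Gaussian gain from the second factor is exactly consumed by the volume growth of the annular covers, and the net result is a uniform bound with \emph{no} spatial decay (this is in fact what the paper obtains: $|\langle Ta,b\rangle|\lesssim A$ with no dependence on $d(B_1,B_2)$). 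So the Gaussian factor in your displayed estimate is spurious and the route through it is not available.

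Third, and most seriously, you jump from ``uniformly controlled by $A$ on atoms'' to ``$\|T\|_{H^1\to BMO}\lesssim A$'' with no justification. The paper explicitly warns (citing Bownik's example, due to Meyer) that a linear form bounded on atoms need \emph{not} extend boundedly to $H^1$, and devotes Theorems \ref{thm_atome_2}, \ref{thm_continuite}, and the regularization-and-limit argument in Theorem \ref{thm_hardy_1} precisely to this passage: one first shows that $Te^{-sH}$ satisfies the same atom bound uniformly in $s$, that $Te^{-sH}$ maps $L^1\to L^\infty$ so that the infinite atomic sum commutes with $Te^{-sH}$, and then lets $s\to0$ using density of $H^1\cap L^2$. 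Omitting this step leaves a hole that the known counterexample shows is not merely cosmetic. Your interpolation step and the treatment of the bounded case are in the right spirit (the paper uses Theorems \ref{interpolation} and \ref{interpolation_finie}), though the ``finite-dimensional complement'' heuristic is not what happens: the bounded case is handled by splitting $f=a+b$ with $a\in L^2$ and $b\in(L^2,H^1)_\theta$ and applying the $L^p\to L^2$ hypothesis to the $L^2$ piece.
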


As a consequence, this allows us to reduce $L^p-L^{p'}$ dispersive estimates to microlocalized $L^2-L^2$ estimates 
(localized in the frequency through the operators $\psi(r^2H)$ and in the physical space through 
the balls $B_r$ and $\widetilde{B_r}$, respecting the Heisenberg uncertainty principle).

Note that Property \eqref{disp} is weaker (and also necessary to have) than a $L^1-L^\infty$ estimate: 
indeed if $T$ is supposed to map $L^1$ to $L^\infty$ with a bound lower than $A$ then Cauchy-Schwarz inequality leads us to Property \eqref{disp}:
\begin{align*}
 \|T \psi(r^2H)f\|_{L^2(\widetilde{B_r})} &\leq A  \| \psi(r^2H)f\|_{L^1}  \mu(\widetilde{B_r})^{1/2} \\  
& \lesssim A \mu(\widetilde{B_r})^{\frac 12} \|f\|_{L^1(B_r)} \\
 &\leq A \mu(\widetilde{B_r})^{\frac 12} \mu(B_r)^{\frac 12} \|f\|_{L^2(B_r)}.
\end{align*}
For the $L^1-L^1$ continuity of $\psi(r^2H)$ see Corollary \ref{coro_continuite_semigroupe}.


\medskip

Our goal is to obtain the dispersive estimate $$\|T_t(H)\|_{H^1 \to \BMO} \lesssim |t|^{-\frac
d2},$$ where $T_t(H)=e^{itH} \psi_{m}(h^2H)$, $h>0$. 
The case of the full range $|t|\leq 1$ (which is independent of $h$) is the most difficult. 
However the case $|t|\leq h^2$ is straightforward: indeed for $m=0$ ($m\neq0$ then deduces easily) 
we have $T_t(H)=e^{itH}e^{-h^2H}=e^{-zH}$ with $z=h^2-it$. The key observation is that $|z|=\sqrt{h^4+t^2}\lesssim h^2 = \text{Re}(z)$. 
Thus (the complex time $z$ lives into a sector far away from the axis of imaginary complex numbers) 
by analyticity, Property \eqref{UE} can be extended to complex time semigroup and so
$$\|T_t(H)\|_{L^1 \to L^{\infty}} \leq \frac{1}{\text{Re}(z)^{\frac d2}}\lesssim
\frac{1}{|z|^{\frac d2}}\leq \frac{1}{|t|^{\frac d2}}.$$
So the full $L^1- L^\infty$ and so Property \eqref{disp} (which is weaker, as we have just seen) are obviously satisfied in the range $|t|\lesssim h^2$. 

The intermediate case $h^2 \lesssim |t|\leq h$ is treated in the particular case of compact
Riemannian manifolds in \cite{BGT} together with the implicated Strichartz estimates with a nontrivial loss of derivatives. 
We will focus on this interesting situation and we describe in this very general setting how dispersive estimates imply these Strichartz estimates (see Theorem \ref{thm_strichartz}).

\bigskip

We intend to emphasize the link between the heat semigroup and the wave operator. 
In the second part of the paper (from Section \ref{section_wave_propagation}) we aim 
to study what dispersive properties on the wave equation would be sufficient to ensure 
our hypothesis \eqref{disp} for $T=e^{itH}$, in order to regain the dispersive estimates 
and then Strichartz estimates for the Schr\"odinger group. Mainly, we are interested in 
the wave propagator $\cos(t\sqrt{H})$ which is defined as follows: for any $f\in L^2$, 
$u(t):=t\mapsto \cos(t\sqrt{H})f$ is the unique solution of the wave equation:
$$\begin{cases}
&\partial_t^2u +Hu = 0 \\
&u_{|t=0}=f\\
&\partial_t u_{|t=0}=0. \end{cases} $$
One can find the explicit solutions of this problem in \cite{Fol} for the Euclidean 
case and in \cite{Berard} for the Riemannian manifold case through precise formulae 
for the kernel of the wave propagator. 
The remarkable property of this operator comes from its finite speed propagation. 
We know that Davies-Gaffney estimates \eqref{davies_gaffney_estimates} imply 
(and indeed are equivalent (\cite[Theorem 3.4]{CS}) to) the finite speed propagation 
property at a speed equals to $1$: namely, for every disjoint open subsets $U_1,U_2 \subset X$, 
every function $f_i \in L^2(U_i)$, $i=1,2$, then
\begin{equation}\label{speed_propagation}
 \langle \cos(t\sqrt H)f_1,f_2 \rangle =0
\end{equation}
for all $0<t<d(U_1,U_2)$. If $\cos(t\sqrt H)$ is an integral operator with kernel $K_t$, 
then \eqref{speed_propagation} simply means that $K_t$ is supported in the ``light cone'' 
$\mathcal D_t:=\{(x,y) \in X^2,\ d(x,y)\leq t\}$.

To apply Theorem \ref{thm1} in order to get dispersive estimates, we first have to prove 
that Schr\"odinger propagators satisfy Property \eqref{disp} for some suitable constant $A$. 
The following formula (see Section \ref{section_wave_propagation}):  for all $z \in \mathbb C$ with $\Re(z)>0$:
$$e^{-zH}=\frac{1}{\sqrt{\pi}} \int_0^{+\infty} \cos(s \sqrt H) e^{-\frac{s^2}{4z}} \frac{ds}{\sqrt z},$$
alllows us to describe the link between Schr\"odinger propagators and wave propagators.

We will need extra assumptions (deeper than just the finite speed propagation property), 
in order to be able to check Property \eqref{disp}. More precisely, we need the following 
short time $L^2-L^2$ dispersive estimates:
\begin{hyp}\label{hyp_cossH_intro} There exist $\kappa\in(0,\infty]$ and an integer $m_0$ 
such that for every $s\in(0,\kappa)$ we have: for every $r>0$, every balls $B_r$, $\widetilde{B_r}$ 
of radius $r$ then
$$\|\cos(s\sqrt H) \psi_{m_0}(r^2H)\|_{L^2(B_r) \to L^2(\widetilde{B_r})} \lesssim \left(\frac{r}{s+r}\right)^{\frac{d-1}{2}} \left( 1+\frac{|L-s|}{r} \right)^{-\frac{d+1}{2}}$$
 where $L=d(B_r,\widetilde{B_r})$.
 \end{hyp}
To obtain our second result we will need more regularity on the measure than \eqref{d}. 
For the next theorem assume that $\mu$  is Ahlfors regular: there exist two 
absolute positive constants $c$ and $C$ such that for all $x \in X$ and $r > 0$:
\begin{equation} \label{ahbis}
 c r^{d} \leq \mu(B(x, r)) \leq C r^{d}.
\end{equation}

Then our second main theorem is the following:
\begin{thm}\label{demo_hyp_Hmn2_intro} Suppose \eqref{ahbis} with $d>1$, \eqref{due} and
Assumption \ref{hyp_cossH_intro} with $\kappa\in(0,\infty]$. Then for every integer $m\geq \max(\frac d2, m_0+\left\lceil\frac{d-1}{2}\right\rceil)$ we have
\begin{itemize}
\item if $\kappa=\infty$: the propagator $e^{itH}$ satisfies Property $(H_m(|t|^{-\frac d2}))$
for every $t\in {\mathbb R}$ and so we have Strichartz estimates without loss of derivatives;
\item if $\kappa<\infty$: for every $\varepsilon>0$, every $h>0$ with
$|t|<h^{1+\varepsilon}$ and integer $m'\geq 0$ the propagator $e^{itH} \psi_{m'}(h^2H)$ satisfies Property $(H_m(|t|^{-\frac d2}))$
and so we have Strichartz estimates with loss of $\frac{1+\varepsilon}{p}$ derivatives.
\end{itemize}
\end{thm}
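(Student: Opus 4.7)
The plan is to represent the frequency-localized Schr\"odinger propagator as an integral of wave propagators via the subordination identity
\[
e^{-zH}=\frac{1}{\sqrt{\pi z}}\int_0^\infty\cos(s\sqrt H)\,e^{-s^2/(4z)}\,ds,\qquad \Re z>0,
\]
apply Assumption~\ref{hyp_cossH_intro} to the $\cos(s\sqrt H)\psi_{m_0}(r^2H/2)$ factors produced by this representation, and thereby verify Property $(H_m(|t|^{-d/2}))$ for $e^{itH}$ (or for $e^{itH}\psi_{m'}(h^2H)$ when $\kappa<\infty$). Once this is secured, Theorem~\ref{thm1} (which is the source of the condition $m\geq d/2$) upgrades the frequency-localized $L^2$-dispersion to an $L^p\to L^{p'}$ dispersive estimate, and the Keel--Tao $TT^*$ mechanism delivers the Strichartz bounds. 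The key algebraic step is the factorization $\psi_m(r^2\lambda)=\psi_{m_0}(r^2\lambda/2)\cdot 2^{m_0}r^{2(m-m_0)}\lambda^{m-m_0}e^{-r^2\lambda/2}$, which is available because $m-m_0\geq\lceil(d-1)/2\rceil\geq 0$, and which pulls out $\psi_{m_0}(r^2H/2)$ while leaving a Gaussian-decaying heat factor of the form $\lambda^{m-m_0+m'}e^{-z'\lambda}$ with $z'=r^2/2+h^2-it$. Subordinating and differentiating $m-m_0+m'$ times in $z'$ (legitimate by the Gaussian decay in $s$) yields
\[
\psi_m(r^2H)\psi_{m'}(h^2H)e^{itH}=\psi_{m_0}(r^2H/2)\int_0^\infty\cos(s\sqrt H)\,\widetilde{K}(s;r,h,t)\,ds,
\]
with
\[
|\widetilde{K}(s;r,h,t)|\lesssim r^{2(m-m_0)}h^{2m'}|z'|^{-(m-m_0+m')-1/2}\bigl(1+s/\sqrt{|z'|}\bigr)^{2(m-m_0+m')}e^{-cs^2\Re(z')/|z'|^2}.
\]

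For $s<\kappa$, Assumption~\ref{hyp_cossH_intro} (after covering $B_r$ and $\widetilde{B_r}$ by a bounded number of balls of radius $r/\sqrt{2}$, using doubling) gives
\[
\|\cos(s\sqrt H)\psi_{m_0}(r^2H/2)\|_{L^2(B_r)\to L^2(\widetilde{B_r})}\lesssim\Bigl(\frac{r}{s+r}\Bigr)^{(d-1)/2}\Bigl(1+\frac{|L-s|}{r}\Bigr)^{-(d+1)/2},
\]
with $L=d(B_r,\widetilde{B_r})$. Since the second factor is essentially a bump of mass $\sim r$ centered at $s=L$, and Ahlfors regularity~\eqref{ahbis} gives $\mu(B_r)^{1/2}\mu(\widetilde{B_r})^{1/2}\sim r^{d}$, the task reduces to verifying
\[
r\cdot\Bigl(\frac{r}{L+r}\Bigr)^{(d-1)/2}|\widetilde{K}(L;r,h,t)|\lesssim |t|^{-d/2}\,r^{d}.
\]
When $|t|\leq r^2+h^2$, $z'$ lies in a sector of holomorphy of the heat semigroup and the bound follows from \eqref{due} extended to complex time. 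When $|t|>r^2+h^2$ one has $|z'|\sim|t|$ and a case analysis on $L$ versus $\sqrt{|t|}$ closes the estimate: in the diagonal regime $L\lesssim\sqrt{|t|}$ the polynomial factor is harmless and the precise exponent $m-m_0\geq(d-1)/2$ is exactly what is needed; in the off-diagonal regime $L\gg\sqrt{|t|}$ the polynomial growth is swallowed by the stretched-exponential factor in $\widetilde{K}$.

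For $\kappa<\infty$, the tail $s\geq\kappa$ must additionally be controlled, and there only the trivial $\|\cos(s\sqrt H)\|_{L^2\to L^2}\leq 1$ is available, so one relies on the Gaussian factor in $\widetilde{K}$. Under the restriction $|t|<h^{1+\varepsilon}$ one has $\Re(z')/|z'|^2\gtrsim h^{-2(1-\varepsilon)}$ on the dominant range of parameters, so the tail is bounded by $\exp(-c\kappa^2 h^{-2(1-\varepsilon)})$, which is absorbed by any polynomial in $|t|$. Once $(H_m(|t|^{-d/2}))$ is established, Theorem~\ref{thm1} converts it into $\|e^{itH}\psi_{m'}(h^2H)\|_{L^p\to L^{p'}}\lesssim |t|^{-d(1/p-1/p')/2}$ and Keel--Tao provides Strichartz. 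In the $\kappa<\infty$ case, dispersion only holds on intervals of length $h^{1+\varepsilon}$; partitioning $[-T,T]$ into such intervals, applying Strichartz on each, summing at the cost $h^{-(1+\varepsilon)/p}$, and finally performing a dyadic Littlewood--Paley decomposition in frequency produce exactly the loss of $(1+\varepsilon)/p$ derivatives.

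I expect the principal obstacle to be the two-scale case analysis: the polynomial factor $(1+s/\sqrt{|z'|})^{2(m-m_0+m')}$ in $\widetilde{K}$ competes directly with the wave-decay profile, and the target bound $|t|^{-d/2}r^d$ closes only after a careful split on $L$ relative to $\sqrt{|t|}$, with the off-diagonal regime $L\gg\sqrt{|t|}$ relying on the stretched-exponential factor and on the sharp condition $m-m_0\geq\lceil(d-1)/2\rceil$. Secondary technical points include justifying the derivative/integral interchange uniformly in $(r,h,t)$, handling the ball-decomposition step needed to use the Assumption with the rescaled regularizer $\psi_{m_0}(r^2H/2)$, and extending \eqref{due} analytically to complex times.
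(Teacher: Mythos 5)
Your strategy (Hadamard subordination, plug in Assumption~\ref{hyp_cossH_intro}, conclude Property $(H_m(|t|^{-d/2}))$, feed into Theorem~\ref{thm1} and a Keel--Tao $TT^*$ argument, then patch in time and sum a Littlewood--Paley decomposition) is the same high-level program as the paper's proof of Theorems~\ref{demo_hyp_Hmn2-bis} and \ref{demo_hyp_Hmn2}. The mechanism you use to produce the core $L^2$--$L^2$ estimate, however, is genuinely different. The paper introduces a smooth cutoff $\chi$ supported in $[0,2|t|/r]$, splits the $s$-integral at $|t|/r$ (and at $\kappa$), and on the cutoff piece performs $2n$ integrations by parts in $s$, trading $\psi_m(r^2H)$ for $\psi_{m-n}(r^2H)$ and transferring the derivatives to $\chi(s)e^{-s^2/4z}$. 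You instead factor $\psi_m(r^2\lambda)\psi_{m'}(h^2\lambda)e^{it\lambda}=\psi_{m_0}(r^2\lambda/2)\cdot C\,r^{2(m-m_0)}h^{2m'}\lambda^{k}e^{-z'\lambda}$ with $z'=r^2/2+h^2-it$ and $k=m-m_0+m'$, so that the extra powers of $\lambda$ become $(-\partial_{z'})^k$ on the heat semigroup, and you differentiate the subordination identity $k$ times in $z'$. The real gain of your route is that $\Re z'\sim r^2$ (rather than the paper's $\Re z=h^2\le r^2$), so the Gaussian factor $e^{-cs^2\Re z'/|z'|^2}\sim e^{-cs^2r^2/t^2}$ furnishes the cutoff at $s\sim|t|/r$ automatically and the explicit $\chi$ is unnecessary. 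The thresholds $m\ge d/2$ and $m-m_0\ge\lceil(d-1)/2\rceil$ emerge from the same exponent bookkeeping, so the final scope is identical.

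Two points in your write-up are imprecise, though neither breaks the argument. First, the off-diagonal regime. In the range $\sqrt{|t|}<L<|t|/r$ the Gaussian $e^{-cL^2r^2/t^2}$ is $\approx 1$ and does \emph{not} swallow the polynomial factor $(1+L/\sqrt{|t|})^{2(m-m_0)}$; what closes the estimate there is the wave-decay factor $(r/(L+r))^{(d-1)/2}\cdot L^{0}$, i.e.\ the $L$-power from Assumption~\ref{hyp_cossH_intro}, and the balance is exact at $L\sim|t|/r$ provided $2(m-m_0)\ge(d-1)/2$ (implied by your hypothesis). The Gaussian only becomes effective once $L\gtrsim|t|/r$, which is also the boundary imposed by finite speed propagation and by the paper's cutoff. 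Second, in the tail $s\ge\kappa$ (case $\kappa<\infty$), your bound $\Re(z')/|z'|^2\gtrsim h^{-2(1-\varepsilon)}$ is not what the reduction $h\le r$, $r^2\le|t|<h^{1+\varepsilon}$ yields; the correct lower bound is $\Re(z')/|z'|^2\sim r^2/|t|^2\ge h^2/h^{2(1+\varepsilon)}=h^{-2\varepsilon}$. That weaker bound is still enough: $\exp(-c\kappa^2 h^{-2\varepsilon})$ dominates any power of $h$, hence any power of $|t|\lesssim h^{1+\varepsilon}$, so the tail is negligible as claimed. One last remark: the Keel--Tao step in the paper runs through the $H^1\to(H^1)^*$ bound (Theorem~\ref{thm_KT}), not the already-interpolated $L^p\to L^{p'}$ bound, but this is a presentational point.
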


It is worth noting that, in the proof, the same approach raises the two cases: 
\begin{itemize}
 \item $\kappa<+\infty$ which leads to Strichartz estimates with loss of derivatives,
 \item $\kappa=+\infty$ which leads to estimates without loss of derivatives.
\end{itemize}
Moreover, Assumption \ref{hyp_cossH_intro} holds in the context of smooth compact Riemannian 
manifold with $\kappa$ given by the injectivity radius and in the Euclidean situation with 
$\kappa = \infty$ (also with smooth perturbation and there $\kappa<\infty$).

As an example, note that in the case of the Euclidean space with an operator of the form 
$H=-\textrm{div} A \nabla$, where $A$ is a matrix with variable and $C^{1,1}$ coefficients, 
then Smith has built a short time parametrix \cite{Smith} of the corresponding wave equation 
(see also the work of Blair \cite{Blair}), which yields in particular our Assumption \ref{hyp_cossH_intro} 
for some $\kappa<\infty$. As a consequence, we deduce that the solutions of Schr\"odinger equation satisfies Strichartz 
estimates with loss of $\frac{1+\varepsilon}{p}$ derivatives for every $\varepsilon>0$. 

By this way, we have an unified approach to deal with compact or noncompact situations and we recover 
(up to a loss $\varepsilon$ as small as we want) the Strichartz estimates with loss of derivatives for 
a compact smooth manifold due to \cite{BGT,ST} and full Strichartz estimates for the Euclidean situation.

The plan of this article is as follow: In Section \ref{section_definition} we first set the 
notations and definitions used throughout the paper. Then we describe the assumptions required 
on the heat semigroup $e^{-tH}$ together with some basic properties about Hardy-$\BMO$ spaces 
and functional calculus associated to $H$. 
Theorem \ref{thm1} is proved in Section \ref{section_dispersion} and we apply it in Section 
\ref{section_strichartz} to prove Strichartz estimates (with a possible loss of derivatives). 
Section \ref{section_wave_propagation} shows Theorem \ref{demo_hyp_Hmn2_intro}, and Section 
\ref{section_euclidean_case} how the hypothesis \eqref{disp} can be derived from the small 
time parametrix of the associated wave operator.

\section{Definitions and preliminaries}\label{section_definition}

\subsection{Notations}

For $B(x,r)$ a ball ($x\in X$ and $r>0$) and any parameter $\lambda>0$, we denote
$\lambda B(x,r) := B(x,\lambda r)$ the dilated and concentric ball. 
As a consequence of the doubling property, a ball $B(x,\lambda r)$ can be covered by
$C\lambda^d$ balls of radius $r$, uniformly in $x \in X$, $r>0$ and $\lambda>1$ ($C$
is a constant only dependent on the ambient space).
Moreover, the volume of the balls satisfies the following behaviour:
\begin{equation}\label{ball_behaviour}
\mu(B(y,r))\lesssim \left (1+\frac{d(x,y)}{r} \right)^d \mu(B(x,r))
\end{equation}
uniformly for all $x,y \in X$ and $r>0$. \\
For a ball $Q$, and an integer $i\geq1$, we denote $C_i(Q)$ the $i$th
dyadic corona around $Q$: 
$$C_i(Q):=2^iQ \backslash 2^{i-1}Q.$$ We also set $C_0(Q)=Q$.\\
If no confusion arises, we will note $L^p$ instead of $L^p(X, \mu)$ for $p \in [1 
\virg \infty]$. \\
We will use $u\lesssim v$ to say that there exists a constant $C$ (independent of
the important parameters) such that $u\leq Cv$ and $u\simeq v$ to say that
$u\lesssim v$ and $v\lesssim u$. \\
If $\Omega$ is a set, $\mathbf1_\Omega$ is the characteristic function of $\Omega$,
defined by
 $$\mathbf1_\Omega(x)=
\begin{cases}
1\textrm{ if }x \in\Omega\\
0\textrm{ if }x\notin\Omega. \\
\end{cases}$$
The Hardy-Littlewood maximal operator is denoted by $\calM$ and is given for every
$x\in X$ and function $f\in L^1_{loc}$ by:
$$ \calM(f)(x):= \sup_{\overset{B \textrm{ ball}}{x\in B}} \ \left(\frac{1}{\mu(B)} \int_B |f| d\mu\right).$$
Since the space is of homogeneous type, it is well-known that this maximal operator
is bounded in any $L^p$ spaces, for $p\in(1,\infty]$.

\subsection{The heat semigroup and associated functional calculus}

We recall the definition of a $L^2$-holomorphic semigroup:
\begin{definition}\label{def_semi-groupe}
A familly of operators $\left(S(z)\right)_{\Re(z)\geq 0}$ on $\mathcal L(L^2)$ is
said to be a holomorphic semigroup on $L^2$ if (with $\Gamma:=\{z\in{\mathbb C},\
\Re(z)\geq 0\}$): 
\begin{enumerate}
 \item $S(0)= id$;
 \item $\forall z_1, z_2 \in \Gamma \virg S(z_1+z_2)=S(z_1) \circ S(z_2)$;
 \item $\forall f \in L^2 \virg \lim \limits_{\genfrac{}{}{0pt}{}{z \to
0}{z\in\Gamma}} \|S(z)f-f\|_{L^2}=0$;
 \item $\forall f,g \in L^2$, the map $z \mapsto \langle S(z)f,g\rangle$ is
holomorphic on $\Gamma$.
\end{enumerate}
\end{definition}

We recall the bounded functional calculus theorem from \cite{RS}: 
\begin{thm}\label{thm_calcul_fonctionnel}
Since $H$ is a nonnegative self-adjoint operator, it admits a $L^\infty$-functional
calculus: if $f \in L^{\infty}(\mathbb R_+)$, then we may consider the
operator $f(H)$ as a $L^2$-bounded operator and
$$\|f(H)\|_{L^2 \to L^2}\leq \|f\|_{L^{\infty}}.$$
\end{thm}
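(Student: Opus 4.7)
The plan is to invoke the spectral theorem for unbounded self-adjoint operators, which is the standard route taken in Reed--Simon and the only nontrivial ingredient. Since $H$ is nonnegative, densely defined, and self-adjoint on $L^2$, the spectral theorem furnishes a unique projection-valued measure $E$ supported on $\sigma(H) \subset [0,\infty)$ such that
\begin{equation*}
H = \int_0^{\infty} \lambda \, dE(\lambda)
\end{equation*}
in the sense of unbounded operators, with $\mathcal{D}(H) = \{u \in L^2 : \int_0^\infty \lambda^2 \, d\langle E(\lambda)u,u\rangle < \infty\}$. I would cite this structural input directly rather than reprove it.

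Given $E$, the next step is to \emph{define} $f(H)$ for any bounded Borel function $f : \mathbb{R}_+ \to \mathbb{C}$ by the spectral integral
\begin{equation*}
f(H) := \int_0^{\infty} f(\lambda) \, dE(\lambda).
\end{equation*}
Well-definedness on all of $L^2$ is immediate because the integrand is bounded, so no domain subtleties arise. One should also check (again a standard consequence of the construction) that this assignment is a $*$-homomorphism from $L^\infty(\mathbb{R}_+)$ to $\mathcal{L}(L^2)$ which agrees with the ordinary polynomial functional calculus; this is exactly what legitimates the notation $f(H)$.

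The norm estimate then reduces to a scalar computation. For each $u \in L^2$ the set function $\mu_u(B) := \langle E(B) u, u \rangle$ is a finite positive Borel measure on $[0,\infty)$ of total mass $\|u\|_{L^2}^2$, and the defining property of the spectral integral gives
\begin{equation*}
\|f(H)u\|_{L^2}^2 = \int_0^{\infty} |f(\lambda)|^2 \, d\mu_u(\lambda) \leq \|f\|_{L^{\infty}}^2 \, \mu_u([0,\infty)) = \|f\|_{L^{\infty}}^2 \, \|u\|_{L^2}^2.
\end{equation*}
Taking the supremum over unit vectors $u \in L^2$ yields $\|f(H)\|_{L^2 \to L^2} \leq \|f\|_{L^\infty}$.

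The only real obstacle, and the reason the paper simply cites \cite{RS}, is the spectral theorem itself, whose proof is lengthy (typically passing through the Cayley transform to reduce to the bounded self-adjoint case, then to a multiplication operator representation via the Gelfand transform on the generated $C^*$-algebra). Since the result is entirely classical and the paper's contribution lies elsewhere, I would content myself with the citation and the short derivation of the norm bound above.
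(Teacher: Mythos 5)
Your proposal is correct and matches the paper's approach: the paper gives no proof, simply citing Reed--Simon for the spectral theorem and the resulting bounded Borel functional calculus, which is exactly what you invoke. Your derivation of the norm bound from the spectral measure $\mu_u(B) = \langle E(B)u,u\rangle$ is the standard computation and fills in precisely what the citation leaves implicit.
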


For any integer $m \geq 1$ and real $n>0$, we set $\psi_{m,n}(x)=x^me^{-nx}$ and
$\psi_m:=\psi_{m,1}$. These smooth functions $\psi_{m,n} \in \mathcal
C^{\infty}(\mathbb R^+)$, vanish at $0$ and at infinity; moreover
$\|\psi_{m,n}\|_{L^{\infty}({\mathbb R}_+)}\lesssim 1$.
The previous theorem allows us to define the operators $\psi_{m,n}(tH)$ for any
$t\geq0$ and $m \in \mathbb N$, $n>0$. 

From the Gaussian estimates of the heat kernel \eqref{UE} and the analyticity of the
semigroup (see \cite{CCO}) it comes that for every integer $m\in{\mathbb N}$ and
$n>0$ the operator $\psi_{m,n}(tH)$ has a kernel $p_{m,n,t}$ also satisfying upper
Gaussian estimates:
\begin{equation}
|p_{m,n,t}(x,y)|\lesssim
\frac{1}{\mu(B(x,\sqrt{t}))}\exp
\left(-c \frac{d(x,y)^2}{t}\right), \quad \forall~t>0,\, \mbox{a.e. }x,y\in
 X.\label{UEmn}
\end{equation}

We now give some basic results about the semigroup thanks to our assumptions.

\begin{prop}\label{prop_continuite_semigroupe} Under \eqref{dd} and \eqref{UE}, the
heat semigroup is pointwisely bounded by the Hardy-Littlewood maximal operator and
is uniformly bounded in every $L^p$-spaces for $p\in[1,\infty]$: for every locally
integrable function $f$ and every $x_0 \in X$, we have
$$ \sup_{t>0} \left\|e^{-tH} f \right\|_{L^\infty(B(x_0,\sqrt{t}))} \lesssim
\calM(f)(x_0)  \quad \textrm{and} \quad \sup_{t>0} \| e^{-tH} f \|_{L^p} \lesssim
\|f\|_{L^p}.$$
\end{prop}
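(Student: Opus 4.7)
The plan is to exploit the pointwise Gaussian upper bound \eqref{UE} together with the doubling property \eqref{dd} and a standard annular decomposition.

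For the pointwise maximal bound, fix $x_0\in X$, $t>0$ and $x\in B(x_0,\sqrt{t})$. Using \eqref{UE}, I would write
\[
|e^{-tH}f(x)| \lesssim \frac{1}{\mu(B(x,\sqrt{t}))} \int_X \exp\!\left(-c\,\tfrac{d(x,y)^2}{t}\right) |f(y)|\, d\mu(y),
\]
and split the integral along the dyadic coronae $C_i(B(x,\sqrt{t}))$. Each integrand is bounded by $e^{-c\,4^{i-1}}$ times the local average, so
\[
|e^{-tH}f(x)| \lesssim \sum_{i\ge 0} e^{-c\,4^{i-1}}\,\frac{1}{\mu(B(x,\sqrt{t}))}\int_{2^iB(x,\sqrt{t})}|f|\, d\mu.
\]
To replace $x$ by $x_0$ in the average, I use that $B(x,2^i\sqrt{t})\subset B(x_0,2^{i+1}\sqrt{t})\subset B(x,2^{i+2}\sqrt{t})$, so by doubling $\mu(B(x_0,2^{i+1}\sqrt{t}))\lesssim 2^{id}\mu(B(x,\sqrt{t}))$. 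This gives
\[
\frac{1}{\mu(B(x,\sqrt{t}))}\int_{2^iB(x,\sqrt{t})}|f|\, d\mu \lesssim 2^{id}\,\calM(f)(x_0),
\]
and the geometric/exponential sum $\sum_i 2^{id} e^{-c\,4^{i-1}}$ converges, yielding the desired bound.

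For the $L^p$ bound, the case $p=\infty$ is immediate from the pointwise estimate above (taking $x_0=x$). For $1<p<\infty$, the same pointwise estimate combined with the $L^p$-boundedness of the Hardy--Littlewood maximal operator $\calM$ gives $\|e^{-tH}f\|_{L^p}\lesssim \|\calM f\|_{L^p}\lesssim \|f\|_{L^p}$. For $p=1$, since $H$ is self-adjoint the kernel $p_t$ is symmetric, so one can dualize: for every $g\in L^\infty$ with $\|g\|_\infty\le 1$,
\[
|\langle e^{-tH}f,g\rangle| = |\langle f, e^{-tH}g\rangle| \le \|f\|_{L^1}\,\|e^{-tH}g\|_{L^\infty} \lesssim \|f\|_{L^1},
\]
using the already established $L^\infty$ bound. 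Taking the supremum over such $g$ yields $\|e^{-tH}f\|_{L^1}\lesssim \|f\|_{L^1}$.

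The only delicate point I anticipate is the recentering step in the pointwise estimate: the Gaussian bound naturally produces averages over balls centered at $x$, while the statement demands control by $\calM(f)(x_0)$ for a nearby $x_0$. This is handled cleanly by two applications of the doubling property, and once this switch is done, the rest is just summing a fast-decaying series. The $p=1$ case might look harder at first glance, but self-adjointness reduces it to the $L^\infty$ estimate, so no direct integration of the kernel in the $x$ variable is needed.
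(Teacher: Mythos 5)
Your proof is correct and covers the whole statement. The pointwise estimate and the $L^p$ case for $p\in(1,\infty]$ coincide in spirit with the paper's treatment (the paper only sketches the pointwise bound, which you fill in cleanly via the annular decomposition and the recentering by doubling, exactly as intended).

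Where you diverge is the $L^1$ endpoint. The paper proves $\|e^{-tH}\|_{L^1\to L^1}\lesssim 1$ directly: it integrates the Gaussian kernel bound, applies Fubini, swaps $\mu(B(x,\sqrt t))$ for $\mu(B(y,\sqrt t))$ via \eqref{ball_behaviour}, and runs a second corona decomposition around $B(y,\sqrt t)$ to bound the $x$-integral by $\mu(B(y,\sqrt t))$. You instead dualize against $L^\infty$, exploiting the symmetry of the kernel (self-adjointness of $H$) to reduce the $L^1$ bound to the already-established $L^\infty$ bound. Both work; your route is shorter and avoids redoing the corona computation. One small point worth spelling out in your version: the pairing identity $\langle e^{-tH}f,g\rangle=\langle f,e^{-tH}g\rangle$ for $f\in L^1$, $g\in L^\infty$ is an application of Fubini with the nonnegative kernel $p_t$, and the a priori finiteness needed to justify Fubini is precisely $\int |f(y)|\,(e^{-tH}|g|)(y)\,d\mu(y)\le\|f\|_{L^1}\|e^{-tH}|g|\|_{L^\infty}<\infty$, which uses the $L^\infty$ bound you derived first. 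So the order of steps ($L^\infty$ before $L^1$) is not just convenient but necessary. With that remark made explicit, the proof is complete.
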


\begin{proof}
The pointwise boundedness by the maximal function is an easy consequence of
\eqref{UE} with the doubling property \eqref{dd}. As a consequence, the
$L^p$-boundedness of the maximal operator yields the uniform $L^p$-boundedness of
the heat semigroup, for $p>1$. Let us now check the $L^1$-boundedness.
By \eqref{UE}, we have: 
\begin{align*}
 \int_{x \in X} |e^{-tH}f(x)|d\mu(x) &\lesssim \int_{x \in X} \int_{y \in X}
\frac{1}{\mu(B(x,\sqrt t))}e^{-c \frac{d(x,y)^2}{t}}|f(y)|d\mu(y) d\mu(x) \\
 &\lesssim \int_{y \in X} |f(y)| \frac{1}{\mu(B(y,\sqrt t))} \int_{x\in X} \left(
1+\frac{d(x,y)}{\sqrt t} \right)^d e^{-c \frac{d(x,y)^2}{t}}d\mu(x) d\mu(y).
\end{align*}
A decomposition in coronas around $B(y, \sqrt t)$ allows us to control the integral
over $x$: 
\begin{align*}
 &\int_{B(y,\sqrt t)} \left( 1+\frac{d(x,y)}{\sqrt t} \right)^d e^{-c
\frac{d(x,y)^2}{t}}d\mu(x)+\sum_{j\geq1}\int_{C_j(B(y,\sqrt t))} \left(
1+\frac{d(x,y)}{\sqrt t} \right)^d e^{-\frac{d(x,y)^2}{t}}d\mu(x)\\
 \leq& 2^d \mu(B(y,\sqrt t))+\sum_{j\geq1} \left( 1+2^j \right)^d e^{-c 2^{2j}}
\mu(B(y, 2^j \sqrt t))\\
 \lesssim& \left( 2^d + \sum_{j\geq1} (1+2^j)^d 2^{jd} e^{-c 2^{2j}} \right)
\mu(B(y,\sqrt t)) \lesssim \mu(B(y,\sqrt t)),
\end{align*}
where the last line results from the doubling property of $\mu$. 
Hence,  uniformly in $t>0$ $$\|e^{-tH}\|_{L^1 \to L^1} \lesssim 1.$$
\end{proof}

\begin{corollaire} \label{coro_continuite_semigroupe} For $m\in{\mathbb N}$ and
$n>0$, since $\psi_{m,n}(tH)$ satisfies \eqref{UE}, we deduce that
the operators $\psi_{m,n}(tH)$ also satisfy the same estimates.
\end{corollaire}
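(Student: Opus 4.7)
The plan is to observe that the proof of Proposition \ref{prop_continuite_semigroupe} uses only two ingredients: the pointwise Gaussian upper bound on the heat kernel, and the doubling property \eqref{dd}. Since the kernel $p_{m,n,t}$ of $\psi_{m,n}(tH)$ enjoys exactly the same form of Gaussian bound \eqref{UEmn} (only the implicit constants change), the argument transposes verbatim. I would therefore organize the proof as a direct redo of the three estimates with \eqref{UEmn} playing the role of \eqref{UE}.

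First, for the pointwise control by the Hardy--Littlewood maximal operator, I would apply \eqref{UEmn} to write, for any $x \in B(x_0,\sqrt{t})$,
$$|\psi_{m,n}(tH)f(x)| \lesssim \int_X \frac{1}{\mu(B(x,\sqrt{t}))}\, e^{-c d(x,y)^2/t}\, |f(y)|\, d\mu(y),$$
split the integration domain into $B(x,\sqrt{t})$ and the dyadic coronas $C_j(B(x,\sqrt{t}))$, and use the doubling property together with the Gaussian decay $e^{-c 2^{2j}}$ to sum the geometric-type series — recovering a constant times $\calM(f)(x)$. The volume comparison \eqref{ball_behaviour} (or directly doubling) also yields $\mu(B(x,\sqrt{t})) \simeq \mu(B(x_0,\sqrt{t}))$ for $x\in B(x_0,\sqrt{t})$, so one controls the integral uniformly by $\calM(f)(x_0)$ with an absolute constant, exactly as in the reference proposition.

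The $L^p$-boundedness for $p\in(1,\infty]$ is then a direct consequence of the first step via the Hardy--Littlewood maximal inequality on spaces of homogeneous type. For the endpoint $L^1$-boundedness, I would mimic the final calculation in the proof of Proposition \ref{prop_continuite_semigroupe}: apply Fubini to swap the order of integration, use \eqref{ball_behaviour} to replace $\mu(B(x,\sqrt{t}))^{-1}$ by $\mu(B(y,\sqrt{t}))^{-1}(1+d(x,y)/\sqrt{t})^d$, and then estimate the inner integral $\int_X (1+d(x,y)/\sqrt{t})^d e^{-c d(x,y)^2/t} d\mu(x)$ by the same corona decomposition, obtaining a uniform constant multiple of $\mu(B(y,\sqrt{t}))$ which cancels.

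There is no real obstacle here; the only point to watch is to invoke \eqref{UEmn} rather than \eqref{UE}, the former being the result from \cite{CCO} that upgrades the Gaussian bound of $e^{-tH}$ to derivatives of the type $H^m e^{-ntH}$. All the $m$- and $n$-dependence is absorbed into the implicit constant of \eqref{UEmn}, and the estimate is uniform in $t>0$, so the final continuity statements inherit the same uniformity in $t$.
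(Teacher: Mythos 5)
Your proposal is correct and matches the paper's intent: the corollary has no separate written proof because, as you observe, the argument for Proposition \ref{prop_continuite_semigroupe} relies only on the Gaussian kernel bound and the doubling property, both of which are available for $\psi_{m,n}(tH)$ via \eqref{UEmn}. Redoing the three estimates (pointwise domination by $\calM$, hence $L^p$-boundedness for $p>1$, and the separate Fubini/corona argument for $L^1$) with $p_{m,n,t}$ in place of $p_t$ is exactly the implicit content of the corollary.
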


Let us now give some basic properties about the functions $\psi_{m,n}$: 
\begin{prop}\label{prop_basique}
 (a) for all $k \in \mathbb N^*$, $m\in{\mathbb N}$ and $n>0$ then 
$\psi_{km,kn}=\left(\psi_{m,n}\right)^k$;\\
 (b) for all $m,m'\in{\mathbb N}$ and $n,n',u,v>0$  then :
$$\psi_{m,n}(u\cdot)\psi_{m',n'}(v\cdot)=\frac{u^mv^{m'}}{(nu+n'v)^{m+m'}}\psi_{m+m',1}((nu+n'v)\cdot);$$
 (c) for every $r>0$ and every $f\in L^2$ then: $$(1-e^{-r^2H})f=\int_0^{r^2}
He^{-sH}fds=\int_0^{r^2} \psi_{1,1}(sH)f \frac{ds}{s};$$
 (d) for $m\in{\mathbb N^*}$, $n>0$ and $f\in L^2$, then $\left( \int_0^{+\infty}
\|\psi_{m,n}(vH)f\|_{L^2}^2 \frac{dv}{v} \right)^{\frac 12} + \|P_{N(H)}f\|_{L^2} \lesssim \|f\|_{L^2}$; 
where $P_{N(H)}$ is the projector on the kernel of $H$: $N(H):=\{f\in L^2 \cap {\mathcal D}(H),\ Hf=0\}$.\\
 (e) for $m\in{\mathbb N^*}$, $n>0$ up to a constant $c_{m,n}$, we have the
decomposition: $$Id=c_{m,n} \int_0^{+\infty} \psi_{m,n}(sH) \frac{ds}{s}+P_{N(H)}.$$
\end{prop}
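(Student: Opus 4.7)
The proof proposal proceeds part by part, with (a) and (b) being pointwise identities of scalar functions that transfer to the operator level through the bounded functional calculus of Theorem \ref{thm_calcul_fonctionnel}, and (c), (d), (e) reducing to scalar integral identities via the spectral theorem applied to the self-adjoint operator $H$.

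For (a) I simply expand $\psi_{m,n}(x)^k = x^{km}e^{-knx} = \psi_{km,kn}(x)$. For (b) I write
$$\psi_{m,n}(ux)\,\psi_{m',n'}(vx) = u^m v^{m'} x^{m+m'} e^{-(nu+n'v)x}$$
and compare with $\psi_{m+m',1}((nu+n'v)x) = ((nu+n'v)x)^{m+m'} e^{-(nu+n'v)x}$ to extract the multiplicative factor $u^m v^{m'}/(nu+n'v)^{m+m'}$. In both cases the identity at the level of bounded Borel functions on $\mathbb R_+$ yields the operator identity by the functional calculus.

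For (c), I rely on the fundamental theorem of calculus applied pointwise in the spectrum: for every $\lambda \geq 0$ and $r>0$,
$$1 - e^{-r^2\lambda} = \int_0^{r^2} \lambda\, e^{-s\lambda}\, ds = \int_0^{r^2} \psi_{1,1}(s\lambda)\,\frac{ds}{s}.$$
Since the integrand is uniformly bounded in $\lambda$, Fubini's theorem applied to $\langle\cdot,g\rangle$ with respect to the spectral measure of $H$ (combined with the uniform $L^2$-bound $\|He^{-sH}\|_{L^2\to L^2}\lesssim s^{-1}$ on the finite interval $(0,r^2)$) transports the scalar identity to the operator identity, at first weakly in $L^2$ and then strongly.

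For (d) and (e), I use the spectral decomposition $H=\int_0^\infty \lambda\, dE_\lambda$ with $P_{N(H)}$ being the spectral projection at $\{0\}$. After the change of variable $u=v\lambda$ (valid for $\lambda>0$),
$$\int_0^\infty \psi_{m,n}(v\lambda)^2\,\frac{dv}{v} = \int_0^\infty u^{2m-1} e^{-2nu}\, du = \frac{\Gamma(2m)}{(2n)^{2m}},$$
which is finite because $m\geq 1$; at $\lambda=0$ the integrand vanishes identically, again because $m\geq 1$. Tonelli's theorem, applied to the finite positive measure $d\|E_\lambda f\|^2$, yields
$$\int_0^\infty \|\psi_{m,n}(vH)f\|_{L^2}^2\,\frac{dv}{v} = \frac{\Gamma(2m)}{(2n)^{2m}}\,\|(I-P_{N(H)})f\|_{L^2}^2 \leq C\,\|f\|_{L^2}^2,$$
which, combined with $\|P_{N(H)}f\|_{L^2}\leq \|f\|_{L^2}$, gives (d). For (e), the analogous calculation gives $\int_0^\infty \psi_{m,n}(s\lambda)\,ds/s = \Gamma(m)/n^m$ for every $\lambda>0$ and $0$ at $\lambda=0$, so choosing $c_{m,n}:=n^m/\Gamma(m)$ makes the scalar identity $c_{m,n}\int_0^\infty \psi_{m,n}(s\lambda)\,ds/s = \mathbf 1_{\{\lambda>0\}}$ hold, and integration against $dE_\lambda$ then produces the resolution $Id = c_{m,n}\int_0^\infty \psi_{m,n}(sH)\,ds/s + P_{N(H)}$, understood as a strongly convergent improper Bochner integral in $L^2$.

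The only genuine technical point is the justification of the Fubini/Tonelli exchanges and the convergence of the improper integral in (e); these are controlled respectively by the Tonelli theorem (for the nonnegative square function in (d)) and, for (e), by the integrability of $\psi_{m,n}(s\lambda)/s = s^{m-1}\lambda^m e^{-ns\lambda}$ at $s=0$ (thanks to $m\geq 1$) and at $s=\infty$ (thanks to $\lambda>0$ combined with the spectral gap away from $0$ when one first projects onto $(I-P_{N(H)})L^2$).
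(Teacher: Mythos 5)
Your treatment of (a), (b), (c) is correct and is exactly what the paper's ``straightforward'' is pointing at: pointwise scalar identities transferred to the operator level by the $L^\infty$-functional calculus (respectively the spectral theorem). For (d), however, your route is genuinely different from the paper's. The paper proves (d) as a consequence of (e) together with the Cotlar--Stein almost-orthogonality estimate
\[
\|\psi_{m,n}(uH)\psi_{m,n}(vH)\|_{L^2\to L^2}\lesssim \min\Bigl(\tfrac{u}{v},\tfrac{v}{u}\Bigr)^m,
\]
citing \cite{BBR}. You instead apply Tonelli to the spectral measure $d\|E_\lambda f\|^2$ and evaluate the inner $dv/v$-integral exactly, getting the identity $\int_0^\infty \|\psi_{m,n}(vH)f\|_{L^2}^2\,dv/v = \Gamma(2m)(2n)^{-2m}\|(I-P_{N(H)})f\|_{L^2}^2$. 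This is shorter and more precise than the almost-orthogonality route, but it leans on self-adjointness of $H$ in a way the Cotlar--Stein argument does not; the paper's approach would survive in a merely sectorial/$H^\infty$ framework, whereas yours is tied to the spectral theorem. Both are valid here, and yours gives an exact constant instead of an inequality.

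There is one genuine slip in your justification of (e). You write that the convergence of the improper integral at $s\to\infty$ is ``controlled by \ldots the spectral gap away from $0$ when one first projects onto $(I-P_{N(H)})L^2$.'' There is no such spectral gap in general: for $H=-\Delta$ on $\mathbb R^d$ one has $N(H)=\{0\}$ while $\sigma(H)=[0,\infty)$ accumulates at $0$. In particular $\int_0^\infty\|\psi_{m,n}(sH)f\|_{L^2}\,ds/s$ need not be finite, so the integral in (e) is not in general a Bochner integral. The correct statement is weaker but suffices: set $g_{\epsilon,R}(\lambda):=c_{m,n}\int_\epsilon^R \psi_{m,n}(s\lambda)\,ds/s$. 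These functions are uniformly bounded on $[0,\infty)$ and converge pointwise to $\mathbf 1_{(0,\infty)}(\lambda)$ as $\epsilon\to 0$, $R\to\infty$, so by dominated convergence for the spectral integral one gets $g_{\epsilon,R}(H)f\to (I-P_{N(H)})f$ strongly in $L^2$. This is what ``the integral $\int_0^\infty \psi_{m,n}(sH)\,ds/s$'' means, and no spectral gap is required.
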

\begin{proof}
 (a), (b), (c) and (e) are straightforward. (d) is classical and a direct
application of (e) with the almost-orthogonality of $\psi_{m,n}(vH)$ operators: for
every $u,v>0$
 $$ \|\psi_{m,n}(uH) \psi_{m,n}(vH)\|_{L^2\to L^2} \lesssim
\min\left(\frac{u}{v},\frac{v}{u}\right)^{m};$$
for which we refer to \cite{BBR} e.g.
\end{proof}

It is crucial to keep in mind that by the holomorphic functional calculus, item (e)
gives a decomposition of the identity
$$ Id = c_{m,n} \int_0^\infty \psi_{m,n}(sH) \frac{ds}{s}+P_{N(H)},$$
which has to be seen/thought as a smooth version of the spectral decomposition.
Indeed the operator $\psi_{m,n}(sH)$ plays the role of a regularized version of the
projector $\mathbf1_{[s^{-1}, 2s^{-1}]}(H)$.

\begin{rem} \label{rem:calculus}
We would like to emphasize that the use of $\psi_{m,n}$ functions is exactly equivalent to the use of smooth compactly supported cut-off functions. 
Indeed, it is easy by a smooth partition of the unity to build $\psi_{m,n}$ by an absolutely convergent serie 
of smooth and compactly supported cut-off functions. From functional calculus, we also know how we can build 
a smooth and compactly supported function by the resolvent of $H$ (using the semigroup) and so the $\psi_{m,n}$ functions 
(see \cite[Appendix]{IP} or \cite{KU} e.g.).

We have chosen to work with $\psi_{m,n}$ functions to enlighten the connexion between dispersive estimates 
and heat semigroup and also to get around the different norms that we have to consider on the $C^\infty$ space.
\end{rem}

\subsection{Quadratic functionals associated to the heat semigroup and Sobolev spaces}

Let us define some tools for the next theorem: 
$$\varphi (\lambda)=\int_{\lambda}^{+\infty} \psi_{m,n}(u)\frac{du}u,$$
$$\tilde
\varphi(\lambda)=\int_0^{\lambda}\psi_{m,n}(v)\frac{dv}v=\int_0^1\psi_{m,n}(\lambda
u)\frac{du}u.$$

\begin{rem}
 Notice that $\varphi$ is, by integration by parts, a finite linear combination of
functions $\psi_{k,\ell}$ for $k\in\{0,..,m\}$ and $\ell>0$. Moreover for every $\lambda
\in \mathbb R$,  
 $$\tilde
\varphi(\lambda)+\varphi(\lambda)=\int_0^{+\infty}u^{m-1}e^{-nu}du=\frac{\Gamma(m)}{n^m}:=c_{m,n}.$$
\end{rem}
The following theorem will be useful to estimate the $L^p$-norm through the heat
semigroup: 

\begin{thm}\label{thm_LP} Assume \eqref{dd} and \eqref{due}. For every integer
$m\geq 2$, real number $n>0$ and all $p\in(1,\infty)$, we have
\begin{align*}
 \|f\|_{L^p} & \simeq  \|\varphi(H)f\|_{L^p}+\left\| \left(\int_0^1
|\psi_{m,n}(uH)f|^2\frac{du}u\right)^{\frac 12} \right\|_{L^p}. 
\end{align*}
So if $q\geq 2$
\begin{align*}
 \|f\|_{L^q} & \lesssim  \|\varphi(H)f\|_{L^q}+ \left(\int_0^1
\|\psi_{m,n}(uH)f\|_{L^q}^2\frac{du}u\right)^{\frac 12}. 
\end{align*}
\end{thm}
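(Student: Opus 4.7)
The plan is to first establish the equivalence
$$\|f\|_{L^p}\simeq\|\varphi(H)f\|_{L^p}+\|S_{m,n}f\|_{L^p}, \qquad S_{m,n}f:=\Bigl(\int_0^1|\psi_{m,n}(uH)f|^2\,\tfrac{du}{u}\Bigr)^{1/2},$$
from which the second displayed inequality of the statement is immediate: when $q\geq 2$, Minkowski's integral inequality in $L^{q/2}$ gives $\|S_{m,n}f\|_{L^q}\leq \bigl(\int_0^1\|\psi_{m,n}(uH)f\|_{L^q}^{\,2}\,\frac{du}{u}\bigr)^{1/2}$.

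For the direction ``$\gtrsim$'' (bounding the right-hand side by $\|f\|_{L^p}$): iterated integration by parts in $\varphi(\lambda)=\int_\lambda^\infty \psi_{m,n}(u)\frac{du}{u}$ expresses $\varphi$ as a finite linear combination of the functions $\psi_{k,n}$ with $0\leq k\leq m-1$, so Corollary~\ref{coro_continuite_semigroupe} gives $\|\varphi(H)f\|_{L^p}\lesssim\|f\|_{L^p}$ for every $p\in[1,\infty]$. For the square function, the $L^2$-bound is exactly Proposition~\ref{prop_basique}(d). To extend to arbitrary $p\in(1,\infty)$, I would view $f\mapsto(u\mapsto\psi_{m,n}(uH)f)$ as an operator with values in $\mathcal{H}:=L^2((0,1),\frac{du}{u})$; the Gaussian estimates \eqref{UEmn} on the scalar kernels of $\psi_{m,n}(uH)$, together with the analogous bounds on their time derivative (available by analyticity of the semigroup), supply the H\"ormander regularity of the $\mathcal{H}$-valued kernel, and vector-valued Calder\'on--Zygmund theory in the space of homogeneous type $(X,d,\mu)$ then delivers the $L^p$-boundedness for $1<p<\infty$.

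For the direction ``$\lesssim$'' (bounding $\|f\|_{L^p}$ by the right-hand side): Proposition~\ref{prop_basique}(e), combined with the splitting $\int_0^{\infty}=\int_0^1+\int_1^{\infty}$ and the spectral identity $\int_1^{\infty}\psi_{m,n}(uH)\,\frac{du}{u}=\varphi(H)-c_{m,n}P_{N(H)}$, yields
$$f=c_{m,n}^{-1}\bigl(\tilde\varphi(H)f+\varphi(H)f\bigr), \qquad \tilde\varphi(H)f=\int_0^1\psi_{m,n}(uH)f\,\frac{du}{u}.$$
It therefore suffices to bound $\|\tilde\varphi(H)f\|_{L^p}$ by $\|S_{m,n}f\|_{L^p}$. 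Proposition~\ref{prop_basique}(b) (applicable because $m\geq 2$) provides the factorization $\psi_{m,n}(uH)=\psi_{m_1,n/2}(uH)\circ\psi_{m_2,n/2}(uH)$ with integers $m_1,m_2\geq 1$ and $m_1+m_2=m$. Duality together with self-adjointness then gives, for any $g\in L^{p'}$ with $\|g\|_{L^{p'}}=1$,
$$\langle \tilde\varphi(H)f,g\rangle=\int_0^1\langle\psi_{m_2,n/2}(uH)f,\psi_{m_1,n/2}(uH)g\rangle\,\frac{du}{u},$$
and applying Cauchy--Schwarz pointwise in $u$ followed by H\"older in $x$ bounds this quantity by $\|S_{m_2,n/2}f\|_{L^p}\|S_{m_1,n/2}g\|_{L^{p'}}$. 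The second factor is $\lesssim 1$ by the square-function bound established in the previous paragraph, while the first is $\lesssim\|S_{m,n}f\|_{L^p}$ by the $L^p$-equivalence of Littlewood--Paley functionals associated to different admissible pairs of indices, which is itself a standard consequence of Calder\'on's reproducing formula together with the almost-orthogonality estimate of Proposition~\ref{prop_basique}(d).

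The main technical obstacle is the $L^p$-boundedness of $S_{m,n}$ for $p\neq 2$; this is where the Gaussian upper bound \eqref{UE} and the doubling property \eqref{dd} enter in an essential way, through the vector-valued Calder\'on--Zygmund decomposition in the space of homogeneous type. Once this Littlewood--Paley estimate is secured, the reproducing formula of Proposition~\ref{prop_basique}(e) and the factorization of Proposition~\ref{prop_basique}(b) reduce the remaining work to routine duality and Cauchy--Schwarz.
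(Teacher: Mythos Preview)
Your overall architecture---square function boundedness first, then duality for the reverse inequality, then Minkowski for $q\geq 2$---matches the paper's exactly, and the duality step is essentially the same computation the paper carries out.

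The substantive difference, and the place where your argument has a gap, is in the $L^p$-boundedness of the square function $S_{m,n}$ for $p\neq 2$. You propose to obtain this by vector-valued Calder\'on--Zygmund theory, asserting that the Gaussian bounds \eqref{UEmn} together with time-derivative estimates ``supply the H\"ormander regularity of the $\mathcal{H}$-valued kernel.'' This is not correct in the generality of the paper: under only the doubling hypothesis \eqref{dd} and the on-diagonal bound \eqref{due}, the heat kernel need not enjoy any H\"older regularity in the \emph{space} variables, and the time-derivative bounds you invoke give regularity only in the parameter $u$, not in $x$ or $y$. The classical H\"ormander condition $\int_{d(x,y)>2d(y,y')}\|K(x,y)-K(x,y')\|_{\mathcal H}\,d\mu(x)\leq C$ therefore cannot be verified from the stated hypotheses. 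This is precisely the reason the paper appeals instead to Auscher's extrapolation framework \cite{Aus1,BZ}, where the pointwise kernel regularity is replaced by $L^2$--$L^2$ off-diagonal estimates for the operators $A_r=Id-(Id-e^{-r^2H})^M$ and for $TB_r$; these off-diagonal bounds do follow from \eqref{UE} and Davies--Gaffney, and they suffice to push the $L^2$ bound to all of $(1,\infty)$.

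A smaller point: in your reverse inequality you end up needing $\|S_{m_2,n/2}f\|_{L^p}\lesssim\|S_{m,n}f\|_{L^p}$, which you defer to ``the $L^p$-equivalence of Littlewood--Paley functionals.'' This equivalence is true but is itself a nontrivial consequence of the same extrapolation machinery; the paper sidesteps the issue by dualising $\langle f,g\rangle$ directly (rather than $\langle\tilde\varphi(H)f,g\rangle$), which yields $\|f\|_{L^p}\lesssim\|\varphi(H)f\|_{L^p}+\|S_{m/2,n/2}f\|_{L^p}$ in one stroke.
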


Such a result can be seen as a semigroup version of the Littlewood-Paley
characterization of Lebesgue spaces.

\begin{proof}  We give the sketch of the proof (and refer to \cite[Chapter 6,
Theorem 6.1]{Aus1} and \cite[Proposition 2.12]{BBR} for more details where it is
proved that such inequalities hold for every exponent $p$ belonging to the range
dicted by the heat semigroup $e^{-tH}$ ; here \textcolor{red}{$(1,\infty)$}). We aim to study the
boundedness of the quadratic functional
$$T \colon f \mapsto \left( \int_0^1|\psi_{m,n}(s^2H)f|^2\frac{ds}s\right)^{\frac12}.$$
Indeed $T$ is a horizontal square function (or Littlewood-Paley-Stein
$g$-function), and its $L^p$-boundedness is well-known by functional calculus
theory (see \cite{Ste}, \cite{meda} and references therein) when the semigroup is
submarkovian and conservative.

We aim here to quickly explain another approach (more analytic) of its boundedness,
which does not require submarkovian property and conservativeness but relies on Gaussian estimates
\eqref{UE}. We are looking to apply extrapolation result \cite[Theorem 1.1]{Aus1}
or \cite{BZ} to $T$ with $p_0=1$. To keep the notation of \cite{Aus1}  we recall
that 
$$A_r:=Id-(Id-e^{-r^2H})^M \quad \textrm{and} \quad B_r:=Id-A_r=(Id-e^{-r^2H})^M,$$
with $M$ a large enough integer.
First by $L^2$ holomorphic functional calculus, it is known that $T$ is bounded on
$L^2$ (see \cite{BBR} e.g.). We now have to check the two main hypothesis of
\cite[Theorem 1.1]{Aus1}.

By expanding, $A_r$ behaves like $e^{-r^2H}$, in the sense that it admits a kernel
satisfying the Gaussian upper estimates at the scale $r$. Note $B$ a ball of radius
$r$, and $f$ supported in $B$. For all $j\geq1$ and for all $x \in C_j (B)$, we have
\begin{align*}
|A_rf(x)|&\lesssim \int_B \frac{1}{\mu(B(x,r))}e^{-c\frac{d(x,y)^2}{r^2}}|f(y)|d\mu(y) \\
 &\lesssim \frac{1}{\mu(B(x,r))} \int_B e^{-c2^{2j}}|f|d\mu.
\end{align*}
If $z$ denotes the center of $B$, then by the doubling property
\eqref{ball_behaviour} of the measure it comes
$$\mu(B) \lesssim \left( 1+ \frac{d(z,x)}{r}\right)^d \mu(B(x,r)),$$ so that
$$\mu(B(x,r))^{-1} \lesssim \left( 1+\frac{d(z,x)}{r} \right)^d \mu(B)^{-1} \lesssim
2^{jd}\mu(B)^{-1},$$
where we used that $z\in B$ and $x \in C_j(B)$ so $d(z,x) \lesssim 2^jr$.
Hence, $$\left( \frac1{\mu(2^{j+1}B)}\int_{C_j(B)}|A_{r}f|^2d\mu
\right)^{\frac12} \lesssim g(j) \frac{1}{\mu(B)}\int_B|f|d\mu,$$
with $g(j)\lesssim e^{-c 4^j} 2^{jd}$ satisfying
 $$\sum_j g(j)2^{dj}<+\infty.$$
That is the first assumption required in \cite[Theorem 1.1]{Aus1}.

Then the second (and last) assumption of \cite[Theorem 1.1]{Aus1} has been weakened
in \cite{BZ} and we only have to check that for all $j \geq 2$: $$\left(
\frac1{\mu(2^{j+1}B)}\int_{C_j(B)}|T(B_r f)|^2d\mu \right)^{\frac12}\leq g(j) \left(
\frac{1}{\mu(B)}\int_B|f|^2d\mu\right)^{1/2}.$$
We refer the reader to \cite[Step 3, item 1, Theorem 6.1]{Aus1} and also \cite{CDL}
and \cite{ACDH}, where such inequalities are proved, and the arguments only rely on
the Davies-Gaffney estimates \eqref{davies_gaffney_estimates} for $\psi_{m,n}(tH)$.

By this way, we may apply \cite[Theorem 1.1]{Aus1} and deduce that the square
function $T$ is bounded on $L^p$ for every $p\in(1,2]$.
For $p>2$, we have to apply \cite[Theorem 1.2]{Aus1} and this is also detailed in
\cite[Step 2, Theorem 6.1]{Aus1}.
Thus, if $p\in(1,\infty)$ then 
\begin{equation} \label{eq:squ} \left\| \left( \int_0^1
|\psi_{m,n}(s^2H)f|^2\frac{ds}s\right)^{\frac12}\right \|_{L^p} \lesssim
\|f\|_{L^p}.\end{equation}

It remains to check the reverse inequalities. We proceed by duality to finish the
proof. Since $\varphi(x) + \int_0^1 \psi_{m,n}(tx)\frac{dt}{t} = c_{m,n}$ is a constant
independent of $x$, then:
\begin{align*}
c_{m,n} \langle f,g \rangle &=  \langle
f,\varphi(H)g+\int_0^1\psi_{m,n}(tH)g\frac{dt}t \rangle \\
 &= \langle \varphi(H)f,g \rangle +\int_0^1 \langle \psi_{\frac m2, \frac
n2}(tH)f,\psi_{\frac m2, \frac n2}(tH)g \rangle \frac{dt}t. 
\end{align*}
We should decompose $m=m_1+m_2$ with 2 integers $m_1$, $m_2$ comparable to $\frac
m2$. For simplicity we take $m_1=m_2=\frac m2$, assume they are integers, and let to
the reader the minor modifications. The Cauchy-Schwarz inequality for the scalar
product $(u,v)=\int_0^1 u(t)v(t)\frac{dt}t$ gives then: 
\begin{align*}
|\langle f,g \rangle | & \lesssim |\langle \varphi(H)f,g\rangle |+ \int \left(
\int_0^1|\psi_{\frac m2,\frac n2}(tH)f|^2\frac{dt}t\right)^{\frac12} \left(
\int_0^1|\psi_{\frac m2,\frac n2}(tH)g|^2\frac{dt}t\right)^{\frac12} d\mu \\
 &\leq \| \varphi(H)f\|_{L^{p}} \|g\|_{L^{p'}}+ \left \| \left( \int_0^1|\psi_{\frac
m2,\frac n2}(tH)f|^2\frac{dt}t \right)^{\frac12}\right\|_{L^{p}} \left \| \left(
\int_0^1|\psi_{\frac m2,\frac n2}(tH)g|^2\frac{dt}t
\right)^{\frac12}\right\|_{L^{p'}} \\
 &\lesssim \| \varphi(H)f\|_{L^{p}} \|g\|_{L^{p'}}+ \left \| \left(
\int_0^1|\psi_{\frac m2,\frac n2}(tH)f|^2\frac{dt}t
\right)^{\frac12}\right\|_{L^{p}} \|g\|_{L^{p'}},
\end{align*}
where $\frac 1p+\frac 1{p'}=1$ and we used \eqref{eq:squ} for $p'$. \\
Thus, by duality
$$\|f\|_{L^{p}}=\underset{\|g\|_{L^{p'}}\leq 1}{\sup}|<f,g>| \lesssim \|
\varphi(H)f\|_{L^{p}} +\left \| \left( \int_0^1|\psi_{\frac m2,\frac
n2}(tH)f|^2\frac{dt}t \right)^{\frac12}\right\|_{L^{p}}.$$
That concludes the proof of the characterization of the Lebesgue norms, via these
square functionals.

Then in particular for $q\in[2,+\infty)$, Minkowski generalized inequality finally
gives $$\|f\|_{L^q(M)}\lesssim \|
\varphi(H)f\|_{L^q}+\left(\int_0^1\|\psi_{m,n}(uH)f\|_{L^q(M)}^2\frac{du}u\right)^{\frac
12}.$$
\end{proof}

We will also work with the nonhomogeneous Sobolev spaces associated to $H$, defined
in terms of Bessel type: for $s\geq 0$ and $p\in(1,\infty)$, $W_H^{s,p}$ which will
be noted $W^{s,p}$ is the Sobolev space of order $s$ associated/equipped with the
norm 
$$ \|f\|_{W^{s,p}} := \|(1+H)^{\frac s2}f\|_{L^p} \simeq \|f\|_{L^p} + \| H^{\frac
s2}f \|_{L^p}.$$
Following the previous result, it comes
\begin{align*}
\|f\|_{W^{s,p}} &\simeq \|\varphi(H)f\|_{L^p}+ \left\| \left( \sum_{k=1}^{+\infty}
2^{2ks}|\psi_{m,n}(2^{-2k}H)f|^2\right)^{\frac 12}\right\|_{L^p}\\
                &\simeq \|\varphi(H)f\|_{L^p}+ \left\| \left( \int_0^1 u^{-2s}
|\psi_{m,n}(u^2H)f|^2\frac{du}u\right)^{\frac 12} \right\|_{L^p}
\end{align*}

We refer the reader to \cite{BBR} for more details about such Sobolev spaces.
We can move from the discrete to the continuous case of those partitions of
``Littlewood-Paley'' writing:  
\begin{align*}
\sum_{k=1}^{+\infty} \int_1^2
\psi_{m,n}(2^{-2k}u^2\lambda)\frac{du}u&=\sum_{k=1}^{+\infty}
\int_{2^{-k}}^{2^{-(k-1)}}\psi_{m,n}(\lambda v^2)\frac{dv}v
=\int_0^1\psi_{m,n}(\lambda v^2)\frac{dv}v \\
        &=\int_0^{\lambda}\psi_{m,n}(u)\frac{du}{2u} =\int_0^1 \psi(\lambda u)\frac{du}{2u}.
\end{align*}

\begin{rem}
The left hand integral works over $u\in[1,2]$, so we can pass from information on
the discrete case to the same information on the continuous case.
\end{rem}

 \subsection{Hardy and BMO spaces}\label{sectionhardybmo}

We define now atomic Hardy spaces adapted to our situation (dicted by a semigroup on
a doubling space) using the construction introduced in \cite{BZ}. Let $\mathcal Q$
be the family of all balls of $X$: 
$$\mathcal Q := \{ B(x,r) \virg x\in X \virg r>0 \}.$$
We define $\left( B_Q \right)_{Q \in \mathcal Q}$ the family of operators: $$\forall
Q \in \mathcal Q \virg B_Q:=(1-e^{-r^2H})^M,$$ where $r$ is the radius of the ball
$Q$ and $M$ is an integer (large enough: $M\geq \min(\frac34+\frac{3d}{8},3)$ is
sufficient). 
Those operators are bounded on $L^{2}$ uniformly in $r$. Indeed, by expanding, $B_Q$ 
is a finite linear combination of operators $e^{-kr^2H}$ with $k \in \{ 0,
\ldots, M \}$ and Theorem \ref{thm_calcul_fonctionnel} gives $$\|e^{-kr^2H}\|_{L^2
\to L^2} \leq \|x \mapsto e^{-kr^2x}\|_{L^{\infty}(\mathbb R_+)} \leq 1,$$ because
$H$ is nonnegative. 
\begin{rem} \label{rem:M}
$M\geq \frac 34 + \frac{3d}{8}$ ensures that $\frac{4M}{3} - \frac d2 \geq 1$ so
there exists an integer $m \in [\frac d2,\frac{4M}{3}]$.
\end{rem}

\begin{definition}\label{def_atome}
 A function $a \in L^1_{loc}$ is an atom associated with the ball $Q$ if there exists
a function $f_Q$ whose support is included in $Q$ such that $a=B_Q(f_Q)$, with 
 $$\|f_Q\|_{L^{2}(Q)} \leq ( \mu(Q) )^{-\frac{1}{2}}.$$
\end{definition}
That last condition allows us to normalize $f_Q$ in $L^1$. Indeed by the
Cauchy-Schwarz inequality $$\|f_Q\|_{L^1}\leq \|f_Q\|_{L^2(Q)} \mu(Q)^{\frac 12}
\leq 1.$$
Moreover, $B_Q$ is bounded on $L^{1}$ so every atom is in $L^{1}$ and they are also
normalized in $L^1$
\begin{equation}\label{thm_atome_L1} \sup_{a} \|a\|_{L^1} \lesssim 1, 
\end{equation}
where we take the supremum over all the atoms. Indeed, consider an atom 
$a=B_Q(f)=(1-e^{-r^2H})^Mf$ with suitable function $f$ supported on a ball $Q$.
By the binomial theorem, $B_Q$ behaves like $e^{-kr^2H}$. So Proposition
\ref{prop_continuite_semigroupe} gives 
$$\|a\|_{L^1(X)}=\|B_Q(f)\|_{L^1(X)} \leq \sum_{k=1}^M \binom Mk
\|e^{-kr^2H}f\|_{L^1} \lesssim \|f\|_{L^1} \lesssim 1.$$

We may now define the Hardy space by atomic decomposition:

\begin{definition}\label{def_hardy}
 A measurable function $h$ belongs to the atomic Hardy space $H^1_{ato}$, which will
be denoted $H^1$, if there exists a decomposition $$h=\sum_{i \in \N} \lambda_i a_i
\quad \mu- \textrm{a.e.}$$
 where $a_i$ are atoms and $\lambda_i$ real numbers satisfying: $$\sum_{i \in \N}
|\lambda_i| < + \infty.$$
 We equip the space $H^1$ with the norm: $$\|h\|_{H^1}:= \inf \limits_{h=\sum_i
\lambda_i a_i}\sum_{i\in \N}|\lambda_i|,$$
where we take the infimum over all the atomic decompositions.
\end{definition}

For a more general definition and some properties about atomic spaces we refer to
\cite{Ber,BZ}, and the references therein.  From \eqref{thm_atome_L1}, we deduce:
\begin{corollaire} \label{injH1L1} The Hardy space is continuously embedded into $L^1$:
$$ \|f\|_{L^1} \lesssim  \|f\|_{H^1}.$$
From \cite[Corollary 7.2]{BZ}, the Hardy space $H^1$ is also a Banach space.
\end{corollaire}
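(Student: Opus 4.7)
The plan is essentially immediate from what has already been established. The key ingredient is the uniform $L^1$ bound on atoms: by \eqref{thm_atome_L1}, there exists a constant $C>0$ such that every atom $a$ satisfies $\|a\|_{L^1}\leq C$.

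Given $f\in H^1$ and any $\varepsilon>0$, I would pick an atomic decomposition $f=\sum_{i\in\N}\lambda_i a_i$ converging $\mu$-a.e. and satisfying $\sum_i|\lambda_i|\leq (1+\varepsilon)\|f\|_{H^1}$. Since each $a_i$ lies in $L^1$ with $\|a_i\|_{L^1}\leq C$, the partial sums $S_N:=\sum_{i=0}^N\lambda_i a_i$ form a Cauchy sequence in $L^1$ because, for $N<N'$,
\[
\|S_{N'}-S_N\|_{L^1}\leq \sum_{i=N+1}^{N'}|\lambda_i|\,\|a_i\|_{L^1}\leq C\sum_{i=N+1}^{N'}|\lambda_i|\longrightarrow 0.
\]
Thus $(S_N)$ converges in $L^1$ to some $g$, and (passing to a subsequence converging $\mu$-a.e.) we have $g=f$ $\mu$-a.e. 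Passing to the limit in the triangle inequality gives
\[
\|f\|_{L^1}=\lim_{N\to\infty}\|S_N\|_{L^1}\leq \sum_{i\in\N}|\lambda_i|\,\|a_i\|_{L^1}\leq C\sum_{i\in\N}|\lambda_i|\leq C(1+\varepsilon)\|f\|_{H^1}.
\]
Letting $\varepsilon\to 0$ and taking the infimum over atomic decompositions yields the desired continuous embedding $\|f\|_{L^1}\lesssim \|f\|_{H^1}$.

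No step is really a serious obstacle here: the only subtle point is ensuring that the a.e. convergent atomic sum actually converges to the same limit $f$ in $L^1$, which follows from uniqueness of limits after extracting an a.e. convergent subsequence from the $L^1$-Cauchy sequence $(S_N)$. The statement that $H^1$ is a Banach space is not re-proved here; I would simply cite \cite[Corollary 7.2]{BZ} as indicated, where completeness of the atomic quasi-norm space is established in this abstract semigroup framework.
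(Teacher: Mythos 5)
Your proposal is correct and follows the same route as the paper, which simply asserts that the embedding follows from the uniform $L^1$ bound on atoms \eqref{thm_atome_L1}. You have merely spelled out the implicit details — in particular the identification of the $L^1$ limit of the partial sums with $f$ via an a.e. convergent subsequence, which is the only point the paper glosses over.
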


We refer the reader to \cite[Section 8]{BZ}, for details about the problem of
identifying the dual space $(H^1)^*$ with a $\BMO$ space. For a $L^\infty$-function,
we may define the $\BMO$ norm
$$ \|f\|_{\BMO} := \sup_{Q} \left(\aver{Q} |B_Q(f)|^2 \, d\mu \right)^{1/2},$$
where the supremum is taken over all the balls. If $f\in L^\infty$ then $B_Q(f)$ is
also uniformly bounded (with respect to the ball $Q$), since the heat semigroup is
uniformly bounded in $L^\infty$ (see Proposition \ref{prop_continuite_semigroupe})
and so $\|f\|_{\BMO}$ is finite.

\begin{definition} The functional space $\BMO$ is defined as the closure 
$$\BMO:= \overline{\left\{ f\in L^\infty + L^2,\ \|f\|_{\BMO}<\infty \right\} }$$
for the $\BMO$ norm.
\end{definition}

\begin{rem} \label{rem:BMO}
The following characterization of the $\BMO$ norm will be useful: for $f\in L^2$ then
\begin{equation}\label{normBMO}
\|f\|_{\BMO}=\sup \limits_{a\textrm{ atom}} |\langle f,a \rangle |
\end{equation}
and $f$ belongs to $\BMO$ if and only if the right hand side is finite.
Indeed if $f\in L^2$ then for all ball $Q$
\begin{align*}
 \mu(Q)^{-\frac12}\|B_Q(f)\|_{L^2(Q)}&=\mu(Q)^{-\frac12} \sup \limits_{\overset{g\in
L^2(Q)}{\|g\|_{L^2(Q)}} \leq1} |\langle B_Q(f),g \rangle | \\
 &=\sup \limits_{\overset{g\in L^2(Q)}{\|g\|_{L^2(Q)}} \leq1}
| \langle f,B_Q(\mu(Q)^{-\frac12}g) \rangle |,
\end{align*}
where we used that $B_Q$ is self-adjoint.
One can easily check that the collection of atoms exactly corresponds to the
collection of functions of type $B_Q(\mu(Q)^{-\frac12}g)$ with $g\in L^2(Q)$ and
$\|g\|_{L^2}\leq 1$.
\end{rem}

Following \cite[Section 8]{BZ}, it comes that $\BMO$ is continuously embedded into
the dual space $(H^1)^*$ and contains $L^\infty$:
$$ L^\infty \hookrightarrow \BMO \hookrightarrow (H^1)^*.$$
Hence
\begin{equation}\label{H1*doncBMO}
 \|T\|_{H^1 \to (H^1)^*} \lesssim \|T\|_{H^1 \to \BMO}.
\end{equation}

The following interpolation theorem between Hardy spaces and Lebesgue spaces is the
key of our study: 
\begin{thm}\label{interpolation}
For all $\theta \in (0,1)$, consider the exponent $p\in(1,2)$ and
$q=p'\in(2,\infty)$ given by
$$\frac 1p=\frac{1-\theta}{2}+\theta \quad \textrm{ and }\quad \frac{1}{q} =
\frac{1-\theta}{2}.$$
Then (using the interpolation notations), we have
$$(L^2, H^1)_{\theta}=L^p \quad \textrm{and} \quad (L^2, (H^1)^*)_{\theta}
\hookrightarrow L^{q},$$
if the ambient space $X$ is non-bounded and
$$ L^p \hookrightarrow L^2+(L^2, H^1)_{\theta} \quad \textrm{and} \quad L^2 \cap
(L^2, (H^1)^*)_{\theta}  \hookrightarrow L^{q},$$
if the space $X$ is bounded.

The same results hold replacing $(H^1)^*$ by $\BMO$ thanks to \eqref{H1*doncBMO}.

\end{thm}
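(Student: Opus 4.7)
The plan is to establish the identity $(L^2, H^1)_\theta = L^p$ for the real $K$-interpolation functor and then to deduce the dual embedding $(L^2, (H^1)^*)_\theta \hookrightarrow L^q$ by abstract duality. The easy inclusion $(L^2, H^1)_\theta \hookrightarrow L^p$ is free: by Corollary~\ref{injH1L1} one has $H^1 \hookrightarrow L^1$, and the classical real interpolation identity $(L^2, L^1)_\theta = L^p$ combined with the monotonicity of the $K$-functional in the endpoint spaces yields the embedding immediately.

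The heart of the argument is the reverse inclusion $L^p \hookrightarrow (L^2, H^1)_\theta$. I would estimate the $K$-functional of an arbitrary $f \in L^p$ by performing, for each parameter $t>0$, an $L^p$-adapted Calder\'on--Zygmund decomposition $f = g_t + b_t$ with $g_t \in L^2$ and $b_t$ a sum of semigroup atoms lying in $H^1$. Concretely: choose a threshold $\alpha = \alpha(t)$, form a Whitney cover $\{Q_i\}$ of the level set $\{\calM(|f|^p) > \alpha^p\}$ using the doubling property~\eqref{dd}, and on each $Q_i$ build an atom $a_i = B_{Q_i}(f_{Q_i})$ by solving $B_{Q_i} f_{Q_i} = $ (local piece of $f$) with the sharp bound $\|f_{Q_i}\|_{L^2(Q_i)} \lesssim \alpha\,\mu(Q_i)^{1/2}$; the solvability of this equation uses the spectral calculus of $(1-e^{-r^2H})^M$ (Theorem~\ref{thm_calcul_fonctionnel}) together with the Gaussian bound~\eqref{UE}. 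Standard computations then give $\|g_t\|_{L^2}^2 \lesssim \alpha^{2-p}\|f\|_{L^p}^p$ and $\|b_t\|_{H^1} \lesssim \alpha^{1-p}\|f\|_{L^p}^p$; optimising in $\alpha$ as a function of $t$ yields the required $K$-functional estimate. This Calder\'on--Zygmund step in the semigroup framework is the analogue of the construction in \cite{BZ} (see also \cite{Aus1, HM}) and is where the real work lies.

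Once $(L^2, H^1)_\theta = L^p$ is in hand, the embedding $(L^2, (H^1)^*)_\theta \hookrightarrow L^q$ follows from the Lions--Peetre duality theorem for the real interpolation method (density of $L^2 \cap H^1$ in each endpoint being automatic), which yields $(L^2, (H^1)^*)_\theta \hookrightarrow \bigl((L^2, H^1)_\theta\bigr)^* = (L^p)^* = L^q$; the corresponding $\BMO$ statement follows from $\BMO \hookrightarrow (H^1)^*$ recorded in~\eqref{H1*doncBMO} and the monotonicity of real interpolation. When $X$ is bounded, the identity degenerates at the endpoints because constants lie in $L^1$ but not in $H^1$, so one only gets the weaker statements $L^p \hookrightarrow L^2 + (L^2, H^1)_\theta$ and $L^2 \cap (L^2,(H^1)^*)_\theta \hookrightarrow L^q$, by splitting $f$ into a low-frequency piece absorbed into $L^2$ (using $\mu(X) < \infty$) and a high-frequency piece treated exactly as in the unbounded case. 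The principal obstacle throughout is the Calder\'on--Zygmund decomposition into semigroup atoms; everything else is an application of \eqref{UE}, the doubling property, or abstract interpolation machinery.
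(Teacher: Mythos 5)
Your proof takes a genuinely different route from the paper, but it contains a gap that cannot be repaired as stated. The paper does not attempt a self-contained proof: it simply invokes the abstract interpolation theorems \cite[Theorems~4 and~5]{Ber} for atomic Hardy spaces built from an operator family $(B_Q)_Q$, and then verifies the two hypotheses of that framework, namely that $H^1 \hookrightarrow L^1$ (Corollary~\ref{injH1L1}) and that the maximal function $M_\infty(f)(x)=\sup_{Q\ni x}\|A_Q^* f\|_{L^\infty(Q)}$ is pointwise dominated by $\calM(f)$, which is immediate from Proposition~\ref{prop_continuite_semigroupe} and the binomial expansion of $A_Q = \mathrm{Id}-(\mathrm{Id}-e^{-r^2H})^M$. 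That is the entirety of the paper's argument.

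The gap in your proposal is the Calder\'on--Zygmund step. You propose to write each bad piece as an atom $a_i = B_{Q_i}(f_{Q_i})$ by ``solving $B_{Q_i} f_{Q_i} = (\text{local piece of }f)$'' with $f_{Q_i}$ supported in $Q_i$ and $\|f_{Q_i}\|_{L^2(Q_i)} \lesssim \alpha\,\mu(Q_i)^{1/2}$. This equation is not solvable in the required form. The multiplier $x\mapsto (1-e^{-r^2x})^M$ vanishes at $x=0$, so $B_Q=(1-e^{-r^2H})^M$ has spectrum accumulating at $0$ and in general a nontrivial kernel; it is a contraction, not an isomorphism. More decisively, even on the range of $B_Q$ the inverse is a nonlocal operator, so there is no reason a preimage of a function supported in $Q_i$ would itself be supported in $Q_i$, and the sharp $L^2(Q_i)$ bound would not follow. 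This is precisely the obstruction the abstract theory of \cite{Ber,BZ} (and the related work of Auscher, Hofmann--Mayboroda, etc.) is designed to circumvent: one never inverts the regularizing operator locally. The actual proofs proceed by entirely different means (maximal function comparisons, good-$\lambda$ inequalities, off-diagonal $L^2$ decay of $A_r,B_r$, and Calder\'on reproducing formulas with tent-space machinery), none of which reduce to a pointwise atomic Calder\'on--Zygmund decomposition of an arbitrary $L^p$ function. Your duality step also needs care: Lions--Peetre duality requires density of $L^2\cap H^1$ in both endpoints, which you assert is ``automatic'' but must be checked, and it yields an identity rather than merely an embedding, which already suggests the interpolation parameter is not what you are implicitly assuming. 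The paper avoids all of this by quoting the verified abstract result.
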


\begin{proof}
The result follows directly from \cite[Theorems 4 and 5]{Ber} (and we keep here its
notations). To ensure that it applies in our setting, we have to check that $H^1
\hookrightarrow L^1$ (which we knew from Corollary \ref{injH1L1}), and that the
maximal function $M_{\infty}$ is bounded by $\mathcal M$, where we recall that
$$M_{\infty}(f)(x) = \sup \limits_{Q\ni x} \ \|A_Q^*(f)\|_{L^{\infty}(Q)},$$ with
$$A_Q=Id-(Id-e^{-r^2H})^M \textrm{ is self-adjoint and }r\textrm{ denotes the radius
of }Q.$$
The binomial theorem shows that $A_Q$ is a linear combination of operators $e^{-kr^2H}$ for $k\in \{1,\ldots,M \}$. 
So the property that $M_{\infty}$ is
pointwisely controlled by $\calM$ is a direct consequence of Proposition
\ref{prop_continuite_semigroupe}.
\end{proof}

In the situation of bounded space (with a finite measure), interpolation is more
delicate since the previous result does not give a complete characterization of
$L^p$ as an intermediate space. We have the following:
\begin{thm}\label{interpolation_finie}  Assume that the space is bounded (or
equivalently that $\mu(X)<+\infty$) and consider a self-adjoint operator $T$
satisfying the following boundedness for some $p\in(1,2)$: 
$$\begin{cases}
 &\|T\|_{L^2 \to L^2} \lesssim 1\\
 &\|T\|_{H^1 \to (H^1)^*} \lesssim A<+\infty\\
 &\|T\|_{L^p  \to L^2} \lesssim B<+\infty,
\end{cases}$$ 
then $T$ is bounded from $L^p$ to $L^{p'}$ with
$$ \|T\|_{L^p \to L^{p'}} \lesssim  B+A^{\frac{1}{p}-\frac{1}{p'}}.$$
The same result holds with $\BMO$ instead of $(H^1)^{*}$ by \eqref{H1*doncBMO}.
\end{thm}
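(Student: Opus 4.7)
The plan is to decompose an $L^p$-input via the bounded-case embedding of Theorem~\ref{interpolation} and use the new hypothesis $\|T\|_{L^p \to L^2} \lesssim B$ to compensate for the two one-sided embeddings that replace the identity $(L^2,H^1)_\theta = L^p$ of the unbounded case. Set $\theta = \frac{1}{p} - \frac{1}{p'} \in (0,1)$; a short computation shows this is precisely the interpolation parameter of Theorem~\ref{interpolation} matching this pair $(p,p')$.

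First I would harvest two auxiliary operator bounds. Since $T$ is self-adjoint, dualizing $\|T\|_{L^p \to L^2} \lesssim B$ produces $\|T\|_{L^2 \to L^{p'}} \lesssim B$. Real interpolation between the endpoint estimates $\|T\|_{L^2 \to L^2} \lesssim 1$ and $\|T\|_{H^1 \to (H^1)^*} \lesssim A$ then yields
$$\|T\|_{(L^2,H^1)_\theta \to (L^2,(H^1)^*)_\theta} \lesssim A^\theta.$$
Given $f \in L^p$, the bounded-case embedding $L^p \hookrightarrow L^2 + (L^2,H^1)_\theta$ from Theorem~\ref{interpolation} lets me write $f = f_1 + f_2$ with $f_1 \in L^2$, $f_2 \in (L^2,H^1)_\theta$, and
$$\|f_1\|_{L^2} + \|f_2\|_{(L^2,H^1)_\theta} \lesssim \|f\|_{L^p}.$$
The contribution of $f_1$ is then controlled directly by the dualized bound: $\|Tf_1\|_{L^{p'}} \lesssim B \|f_1\|_{L^2} \lesssim B \|f\|_{L^p}$.

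The heart of the argument is $Tf_2$, which I would estimate in two norms simultaneously. Interpolation gives $\|Tf_2\|_{(L^2,(H^1)^*)_\theta} \lesssim A^\theta \|f\|_{L^p}$, while the original hypothesis applied to $f_2$ (which lies in $L^p$ with $\|f_2\|_{L^p} \lesssim \|f\|_{L^p}$, using the embedding $L^2 \hookrightarrow L^p$ valid on a finite-measure space) produces $\|Tf_2\|_{L^2} \lesssim B \|f\|_{L^p}$. Invoking now the second bounded-case embedding $L^2 \cap (L^2,(H^1)^*)_\theta \hookrightarrow L^{p'}$, I conclude $\|Tf_2\|_{L^{p'}} \lesssim (B + A^\theta)\|f\|_{L^p}$. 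Summing the two pieces yields the announced bound, and the passage from $(H^1)^*$ to $\BMO$ in the statement comes for free from \eqref{H1*doncBMO}.

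The main obstacle is conceptual rather than technical: in the bounded case the interpolation statement of Theorem~\ref{interpolation} degenerates into one-sided embeddings at both ends, so interpolating $T$ alone between $L^2 \to L^2$ and $H^1 \to (H^1)^*$ is insufficient to reach $L^{p'}$. The hypothesis $\|T\|_{L^p \to L^2} \lesssim B$ must play a dual role to close the argument: on the input side it handles the $L^2$-piece of the decomposition via duality, and on the output side it forces $Tf_2$ into $L^2$ so that the intersection embedding into $L^{p'}$ becomes available. Once this dual use of $B$ is recognized, every remaining step is routine.
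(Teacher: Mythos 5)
Your proposal is correct and follows essentially the same route as the paper: decompose via the one-sided embedding $L^p \hookrightarrow L^2 + (L^2,H^1)_\theta$, use the self-adjointness of $T$ to dualize the $L^p\to L^2$ bound for the $L^2$-component, and for the interpolation-space component bound $Tf_2$ simultaneously in $L^2$ and in $(L^2,(H^1)^*)_\theta$ before invoking the intersection embedding into $L^{p'}$. The only (harmless) variation is your justification of $\|f_2\|_{L^p}\lesssim\|f\|_{L^p}$ by writing $f_2 = f - f_1$ with $f_1\in L^2\hookrightarrow L^p$ on a finite-measure space, whereas the paper instead uses $(L^2,H^1)_\theta \hookrightarrow (L^2,L^1)_\theta = L^p$; both are valid.
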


\begin{proof}
 Let $p\in (1,2)$. We aim to apply Theorem \ref{interpolation} to $T$. Pick $\theta
\in (0,1)$ such that $\frac{1- \theta}{2}=1-\frac 1p$. Then $\theta=\frac 1p -
\frac{1}{p'}$.
 Let $f\in L^p \hookrightarrow L^2+(L^2,H^1)_{\theta}$. We chose a decomposition
$f=a+b$ with $a\in L^2$ and $b\in (L^2,H^1)_{\theta}$ such that 
$$ \|a\|_{L^2} + \| b\|_{(L^2,H^1)_{\theta}} \lesssim \|f\|_{L^p}.$$
 Since $T$ is self-adjoint, $T$ is bounded from $ L^2$ to $L^{p'} $ with a norm at
most $B$. Thus 
 $$\|Ta\|_{L^{p'}}\lesssim B\|a\|_{L^2}.$$
 Similarly, by Theorem \ref{interpolation}: 
 $$\|Tb\|_{L^{p'}}\lesssim \|Tb\|_{L^2} + \|Tb\|_{(L^2,(H^1)^*)_{\theta}} \lesssim
B\|b\|_{L^p} + A^{\frac{1}{p}-\frac{1}{p'}} \|b\|_{(L^2,H^1)_{\theta}}.$$
 Moreover $ H^1 \hookrightarrow L^1$ so $(L^2,H^1)_{\theta} \hookrightarrow
(L^2,L^1)_{\theta}= L^p$. Consequently,
  $$\|Tb\|_{L^{p'}}\lesssim  \left( B + A^{\frac{1}{p}-\frac{1}{p'}}\right)
\|b\|_{(L^2,H^1)_{\theta}}.$$
  Hence 
  $$\|Tf\|_{L^{p'}} \lesssim B\|a\|_{L^2} +
\left(B+A^{\frac{1}{p}-\frac{1}{p'}}\right) \|b\|_{(L^2,H^1)_{\theta}}\lesssim
\left(B+A^{\frac{1}{p}-\frac{1}{p'}}\right) \|f\|_{L^p}.$$
\end{proof}

\subsection{On the hypothesis (Hm(A))}

We aim here to study the behaviour of Assumption \eqref{dispmn} with respect to the
parameters $m,n$.

Consider a fix operator $T$, a positive real $A>0$ and let us define property
\eqref{dispmn} for $m\in{\mathbb N}$ and $n>0$: 
\begin{equation}\label{dispmn} \tag{$H_{m,n}(A)$}
      \|T\psi_{m,n}(r^2H)\|_{L^2(B_r)\to L^2(\widetilde{B_r})} \lesssim A
\mu(B_r)^{\frac 12} \mu(\widetilde{B_r})^{\frac12},
  \end{equation}
where $B_r$ and $\widetilde{B_r}$ are any two balls of radius $r>0$.

\begin{prop} \label{prop_hyp_1}
 For all integer $m\geq0$ and $n,n'>0$: 
$$(H_{m,n'}(A)) \Rightarrow (H_{m,n}(A)).$$
\end{prop}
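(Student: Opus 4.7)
The proof rests on the elementary rescaling identity
\begin{equation*}
\psi_{m,n}(r^2H) \;=\; \Bigl(\frac{n'}{n}\Bigr)^{m}\, \psi_{m,n'}(s^2H), \qquad s := r\sqrt{n/n'},
\end{equation*}
which is immediate from $\psi_{m,\alpha}(x)=x^m e^{-\alpha x}$ together with the equality $n' s^{2}=n r^{2}$. Multiplying by $T$ on the left, the task reduces to deducing an $L^{2}(B_r)\to L^{2}(\widetilde{B_r})$ bound for $T\psi_{m,n'}(s^2H)$ from the assumed $(H_{m,n'}(A))$ at the comparable radius $s$, up to the harmless constant factor $(n'/n)^{m}$.

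The ratio $r/s=\sqrt{n'/n}$ is fixed and depends only on $n$ and $n'$. If $s\geq r$, every ball $B_r=B(x,r)$ lies inside the concentric $B_s=B(x,s)$ (and similarly $\widetilde{B_r}\subset\widetilde{B_s}$), so $(H_{m,n'}(A))$ applied at radius $s$ combined with the doubling property \eqref{dd} (yielding $\mu(B_s)\lesssim\mu(B_r)$ and $\mu(\widetilde{B_s})\lesssim\mu(\widetilde{B_r})$, with constants depending only on $n/n'$) gives $(H_{m,n}(A))$ at once. If $s<r$, cover $B_r$ by a bounded number $N$ of balls $B^i_s$ of radius $s$ and cover $\widetilde{B_r}$ by $M$ such balls $\widetilde{B^j_s}$; decompose any $f\in L^{2}(B_r)$ as $f=\sum_i f_i$ with $f_i$ supported in $B^i_s$ (along a partition of $B_r$), apply $(H_{m,n'}(A))$ to each $\|T\psi_{m,n'}(s^2H)f_i\|_{L^{2}(\widetilde{B^j_s})}$, and sum over $i,j$ by the Cauchy--Schwarz inequality, using disjointness of the supports of the $f_i$'s. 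Doubling again controls each $\mu$-factor by $\mu(B_r)^{1/2}\mu(\widetilde{B_r})^{1/2}$, while $N$ and $M$ are absorbed into the implicit constant.

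No genuine obstacle arises: the entire content of the proposition is the scaling identity above, after which the stability of a localized $L^{2}$-estimate under a bounded dilation of the length scale is automatic in a space of homogeneous type.
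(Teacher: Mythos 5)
Your proof is correct and follows essentially the same route as the paper's: you use the rescaling identity $\psi_{m,n}(r^2H)=(n'/n)^m\psi_{m,n'}(s^2H)$ with $s=r\sqrt{n/n'}$ (the paper normalizes $n'=1$, which is the same identity) and then split into the two cases $s\geq r$ (enlarge the balls and invoke doubling) and $s<r$ (cover $B_r$, $\widetilde{B_r}$ by $O(1)$ balls of radius $s$ and sum). The only cosmetic difference is your symmetric treatment of $n,n'$ and your explicit partition of $B_r$ rather than the paper's slightly looser $f\cdot\mathbf{1}_{B_k}$ decomposition.
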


\begin{proof} For simplicity, we deal with the case $n'=1$. 
Assume Property $(H_{m,1}(A))$. Since
$$\psi_{m,n}(x)=x^m e^{-nx}=(nx)^m e^{-nx} n^{-m}=n^{-m} \psi_{m,1}(nx),$$
it comes $$T\psi_{m,n}(r^2H)=n^{-m} T \psi_{m,1}(nr^2H).$$
If $n\geq 1$ then $B_r \subset \sqrt n B_r$ and $\widetilde{B_r} \subset \sqrt
n\widetilde{B_r}$.
%
Hence, using the doubling property we get
\begin{align*}
 \|T\psi_{m,n}(r^2H)\|_{L^2(B_r)\to L^2(\widetilde{B_r})}&=n^{-m}
\|T\psi_{m,1}(nr^2H)\|_{L^2(B_r)\to L^2(\widetilde{B_r})} \\
 &\leq n^{-m} \|T\psi_{m,1}(nr^2H)\|_{L^2(\sqrt n B_r)\to L^2(\sqrt n
\widetilde{B_r})} \\
 &\leq n^{-m} A \mu(\sqrt n B_r)^{\frac12} \mu(\sqrt n
\widetilde{B_r})^{\frac12}\lesssim A \mu(B_r)^{\frac 12}
\mu(\widetilde{B_r})^{\frac12}.
\end{align*}
If $n \leq 1$ then $\sqrt{n} \widetilde{B_r} \subset \widetilde{B_r}$. We cover
$\widetilde{B_r}$ by $N\simeq \left( \frac{r}{\sqrt n r}\right)^d=n^{-\frac d2}$
balls $\widetilde{B_j}$ of radius $\sqrt n r$ and $B_r$ by $N$ balls $B_k$ of radius
$\sqrt nr$ (satisfying the bounded overlap property).
Thus $$\|T(\psi_{m,n}(r^2H)f)\|_{L^2(\widetilde{B_r})} \leq \sum_j \sum_k
\|T(\psi_{m,n}(r^2H)f.\mathbf1_{B_k})\|_{L^2(\widetilde{B_j})}.$$

Finally: 
\begin{align*}
 \|T\psi_{m,n}(r^2H)\|_{L^2(B_r) \to L^2(\widetilde{B_r})} &\leq \sum_j \sum_k
n^{-m} \|T_t(H) \psi_{m,1}(nr^2H)\|_{L^2(B_k) \to L^2(\widetilde{B_j})}\\
 &\lesssim \sum_j \sum_k n^{-m} A \mu(B_k)^{\frac 12} \mu(\widetilde{B_j})^{\frac12} \\
 &\lesssim A \left(\sum_j \mu(\widetilde{B_j})\right)^{\frac12} \left(\sum_k
\mu(B_k)\right)^{\frac12}  \\
 &\lesssim A \mu(B_r)^{\frac 12} \mu(\widetilde{B_r})^{\frac 12}.
\end{align*}
\end{proof}
We will now be able to focus on $(H_{m,1}(A))$ and functions $\psi_{m,1}=\psi_m$ rather
than keeping the dependence in the parameter $n$.
%
\begin{prop}\label{prop_hyp_2}
 If $m'>m\geq0$ are two integers then $$(H_{m,1}(A)) \Rightarrow (H_{m',1}(A)).$$
\end{prop}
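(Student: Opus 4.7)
The plan is to exploit the multiplicative identity from Proposition~\ref{prop_basique}(b), namely
$$\psi_{m',1}(x) \;=\; \psi_{m'-m,1/2}(x)\,\psi_{m,1/2}(x),$$
which via functional calculus gives the operator factorisation
$$T\psi_{m',1}(r^2H) \;=\; T\psi_{m,1/2}(r^2H) \,\circ\, \psi_{m'-m,1/2}(r^2H).$$
The first factor already carries the right localized bound, since $(H_{m,1}(A))$ implies $(H_{m,1/2}(A))$ by Proposition~\ref{prop_hyp_1}. The second factor is $L^2$-bounded with a kernel satisfying the Gaussian upper bound \eqref{UEmn} at scale $r$, and therefore enjoys $L^2$ off-diagonal estimates on the coronae around any ball of radius $r$. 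The whole task is thus to couple a \emph{localized} $L^2\to L^2$ bound with a smoothing/off-diagonal operator, which forces us to cut the output of the latter into $r$-scale pieces and then reassemble.

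Concretely, I would pick $f \in L^2(B_r)$ and set $h := \psi_{m'-m,1/2}(r^2H) f$. Decomposing $h$ along the dyadic coronae around $B_r$, the standard Davies--Gaffney type estimates derived from \eqref{UEmn} yield
$$\|h\|_{L^2(C_j(B_r))} \;\lesssim\; e^{-c\,4^j}\,\|f\|_{L^2(B_r)} \qquad (j\geq 1),$$
together with the trivial bound $\|h\|_{L^2(B_r)}\lesssim \|f\|_{L^2(B_r)}$ for $j=0$. For each $j$, the doubling property \eqref{dd} lets me cover $C_j(B_r) \subset 2^j B_r$ by at most $N_j \lesssim 2^{jd}$ balls $B_k^{(j)}$ of radius $r$ with uniformly bounded overlap, and via a subordinate partition of unity I write $h\,\mathbf{1}_{C_j(B_r)} = \sum_k h_{j,k}$ with $\supp h_{j,k}\subset B_k^{(j)}$ and $\sum_k \|h_{j,k}\|_{L^2}^2 \lesssim \|h\|_{L^2(C_j(B_r))}^2$.

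On each piece I then apply $(H_{m,1/2}(A))$ between the source ball $B_k^{(j)}$ and the target ball $\widetilde{B_r}$; combining Cauchy--Schwarz in $k$ with $\sum_k \mu(B_k^{(j)}) \lesssim 2^{jd}\mu(B_r)$ yields
$$\sum_k \|T\psi_{m,1/2}(r^2H) h_{j,k}\|_{L^2(\widetilde{B_r})} \;\lesssim\; A\,\mu(\widetilde{B_r})^{1/2}\,2^{jd/2}\,\mu(B_r)^{1/2}\,e^{-c 4^j}\,\|f\|_{L^2(B_r)}.$$
Summing over $j \geq 0$, the super-exponential decay $e^{-c 4^j}$ easily absorbs the polynomial factor $2^{jd/2}$, leaving precisely the estimate $(H_{m',1}(A))$.

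The only genuinely delicate point is the $L^2$ off-diagonal decay of the auxiliary operator $\psi_{m'-m,1/2}(r^2H)$ on the coronae, which is where one uses $m'\geq m$ so that \eqref{UEmn} applies to $\psi_{m'-m,1/2}$. Once granted, the rest is routine bookkeeping: combine the hypothesized localized bound on $T\psi_{m,1/2}(r^2H)$ with the annular decay of $h$, and check convergence of the resulting geometric sum in $j$.
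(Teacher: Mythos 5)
Your proposal is correct and follows essentially the same route as the paper: the same factorisation $\psi_{m',1}=\psi_{m,1/2}\,\psi_{m'-m,1/2}$, the same dyadic corona decomposition around $B_r$, the same covering of each corona by $\lesssim 2^{jd}$ balls of radius $r$ so that $(H_{m,1/2}(A))$ can be applied, and the same summation over $j$ driven by the Gaussian decay of the auxiliary kernel. (Your annular estimate $\|h\|_{L^2(C_j(B_r))}\lesssim e^{-c4^j}\|f\|_{L^2(B_r)}$ in fact carries a polynomial factor of order $2^{3jd/2}$ coming from \eqref{ball_behaviour} and $\mu(C_j(B_r))$, but the exponential absorbs it, so the argument is unaffected; also your Cauchy--Schwarz step in $k$ gives a slightly sharper polynomial exponent than the paper's crude $\sum_k\mu(B_k)^{1/2}\lesssim 2^{3jd/2}\mu(B_r)^{1/2}$, which is again immaterial.)
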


\begin{proof}
 Assume $(H_{m,1}(A))$ is satisfied. Then, by Proposition \ref{prop_hyp_1}, $(H_{m,n}(A))$
is also true for all $n>0$. 
First we remark that $$ T\psi_{m',1}(r^2H)=T\psi_{m,\frac 12}(r^2H)\psi_{m'-m,\frac
12}(r^2H).$$
Hence, decomposing $X$ in dyadic coronas around $B_r$: 
\begin{align*}
 \|T&\psi_{m',1}(r^2H)\|_{L^2(B_r) \to L^2(\widetilde{B_r})} \\
 &\leq \sum_{j=0}^{+\infty} \|T\psi_{m,\frac 12}(r^2H)\|_{L^2(C_j(B_r)) \to
L^2(\widetilde{B_r})} \|\psi_{m'-m,\frac 12}(r^2H)\|_{L^2(B_r) \to L^2(C_j(B_r))}.
\end{align*}
Let $f \in L^2(B_r)$. We treat the case $j=0$ with Proposition
\ref{prop_continuite_semigroupe} and $(H_{m,\frac 12}(A))$: 
$$\|T\psi_{m,\frac 12}(r^2H)\|_{L^2(B_r) \to L^2(\widetilde{B_r})}
\|\psi_{m'-m,\frac 12}(r^2H)\|_{L^2(B_r) \to L^2(B_r)}\lesssim A \mu(B_r)^{\frac 12}
\mu(\widetilde{B_r})^{\frac 12}.$$
Assume now that $j\geq1$. If $x\in C_j(B_r)$, then by Cauchy-Schwarz inequality:  
\begin{align*}
 |\psi_{m'-m,\frac 12}(r^2H)f(x)|&\leq \int_{B_r}
\frac{1}{\mu(B(x,r))}e^{-c\frac{d(x,y)^2}{r^2}}|f(y)|d\mu(y) \\
 &\leq \frac{e^{-c2^{2j}} \mu(B_r)^{\frac 12}}{\mu(B(x,r))} \|f\|_{L^2(B_r)}.
\end{align*}
By \eqref{ball_behaviour}, we have already seen that for every $x\in C_j(B_r)$
$$ \mu(B_r) \lesssim 2^{jd} \mu(B(x,r)),$$
which yields
\begin{align*}
 |\psi_{m'-m,\frac 12}(r^2H)f(x)| \lesssim e^{-c2^{2j}} 2^{jd} \mu(B_r)^{-\frac 12}
\|f\|_{L^2(B_r)}.
\end{align*}
Hence, by the doubling property:
\begin{equation} \|\psi_{m'-m,\frac 12}(r^2H)f\|_{L^2(B_r) \to L^2(C_j(B_r))}
\lesssim e^{-c2^{2j}}2^{\frac{3jd}{2}}. \label{eq:11} \end{equation}
Consider $(B_{k})_{k=0,...,K}$ a collection of balls of radius $r$ (with a bounded
overlap property so $K\lesssim 2^{jd}$) which covers $C_j(B_r)$ with, by the
doubling property: $\mu(B_k) \lesssim 2^{jd} \mu(B_r)$.
From $(H_{m,\frac 12})$ it follows
\begin{align}
\|T\psi_{m,\frac 12}(r^2H)\|_{L^2(C_j(B_r)) \to L^2(\widetilde{B_r})} & \leq
\sum_{k=0}^N \|T \psi_{m,\frac 12}(r^2H)\|_{L^2(B_k) \to L^2(\widetilde{B_r})}
\nonumber \\
& \leq \sum_{k=0}^N A \mu(B_{k})^{\frac 12} \mu(\widetilde{B_r})^{\frac 12}
\nonumber \\
 &  \lesssim A 2^{\frac{3}{2}jd} \mu(B_{r})^{\frac 12} \mu(\widetilde{B_r})^{\frac
12}. \label{eq:12}
\end{align}
Thus, combining (\ref{eq:11}) and (\ref{eq:12}), it comes 
\begin{align*}
 \|T\psi_{m',1}(r^2H)\|_{L^2(B_r) \to L^2(\widetilde{B_r})} &\lesssim
\left(1+\sum_{j\geq1} e^{-c2^{2j}}2^{3jd}\right) A \mu(B_r)^{\frac 12}
\mu(\widetilde{B_r})^{\frac 12} \\
 &\lesssim A \mu(B_r)^{\frac 12} \mu(\widetilde{B_r})^{\frac 12},
\end{align*}
which ends the proof of Property $(H_{m',1}(A))$.
\end{proof}

We sum up Propositions \ref{prop_hyp_1} and \ref{prop_hyp_2} in:

\begin{thm} \label{thm:H} Assume \eqref{dd}. For every integer $m\geq 0$, Property
$(H_{m,n}(A))$ is independent on $n>0$. So let us call $(H_m(A))$ this property. It is
``increasing in $m$'', since for two integers $m'>m\geq 0$ 
$$ (H_{m}(A)) \Rightarrow (H_{m'}(A)).$$
\end{thm}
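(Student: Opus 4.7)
The plan is to simply package the two previous propositions, which together already contain all of the analytic content of the statement; no new estimate needs to be proved here. First, for the $n$-independence, I would invoke Proposition \ref{prop_hyp_1}, which gives the implication $(H_{m,n'}(A)) \Rightarrow (H_{m,n}(A))$ for any pair of positive reals $n, n'$. Applying this in both directions shows that, for a fixed integer $m \geq 0$, all the properties in the family $\{(H_{m,n}(A))\}_{n>0}$ are logically equivalent, so one may unambiguously write $(H_m(A))$ for the common property (identifying it, for definiteness, with $(H_{m,1}(A))$).

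Then, for the monotonicity in $m$: assuming $(H_m(A))$, by the convention just chosen this is exactly $(H_{m,1}(A))$, and Proposition \ref{prop_hyp_2} directly yields $(H_{m',1}(A))$ for every integer $m' > m$. Another application of the first half of the theorem identifies $(H_{m',1}(A))$ with $(H_{m'}(A))$, giving the desired implication $(H_m(A)) \Rightarrow (H_{m'}(A))$. There is essentially no obstacle; the only point worth noting is that Proposition \ref{prop_hyp_2} is formulated with $n=1$ on both sides, which is precisely why the $n$-independence from Proposition \ref{prop_hyp_1} must be invoked first, both to normalize the hypothesis to $n=1$ and then to transport the conclusion back to arbitrary $n > 0$. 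The only ingredient from the ambient geometry used through these two propositions is the doubling property \eqref{dd}, which is why it is the sole hypothesis of the theorem.
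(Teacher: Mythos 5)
Your proposal is correct and is essentially identical to the paper's own treatment: the paper presents Theorem \ref{thm:H} explicitly as a summary of Propositions \ref{prop_hyp_1} and \ref{prop_hyp_2}, with no additional argument. Your observation about using Proposition \ref{prop_hyp_1} to normalize to $n=1$ before and after applying Proposition \ref{prop_hyp_2} is exactly the intended logic.
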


\section{Dispersion inequality from Property (Hm(A))} \label{section_dispersion}

The aim of this section is to show Theorem \ref{thm1}, more precisely that Property
\eqref{disp} implies a $H ^1-BMO$ and $L^p-L^{p'}$ dispersive estimates.
The main idea is first to prove boundedness of the operator on atoms, then to deduce
boundedness on the whole Hardy space $H^1$, and finally to interpolate with the
$L^2$-boundedness.

In all this section, we fix a large enough integer $M\geq\min(3, \frac{3}{4}+ \frac{3d}{8})$,
which allows us to consider the notions of {\it atoms} and Hardy spaces $H^1$, built
with this parameter. As pointed out in Remark \ref{rem:M}, that also allows us to
find an integer $m \in [\frac d2,\frac{4M}{3}]$.

\subsection{Boundedness on atoms}

\begin{thm}\label{thm_atome_1} Assume \eqref{dd} and \eqref{due}. Let $T$ be a
$L^2$-bounded operator, which commutes with $H$.
 If $T$ satisfies Property \eqref{disp} for a certain integer $m \leq \frac{4M}{3}$, then
one gets 
$$ \sup_{a,b} |\langle T a,b\rangle | \lesssim A,$$
where the supremum is taken over all atoms $a$, $b$.
\end{thm}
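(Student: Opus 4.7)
My plan is as follows.

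\textbf{Setup and commutation.} Let $a = B_Q f_Q$ and $b = B_{\tilde Q} g_{\tilde Q}$ be two atoms, with $Q$ of radius $r$ and $\tilde Q$ of radius $\tilde r$, and assume without loss of generality $r \leq \tilde r$. Since $T$ commutes with $H$ (hence with every function of $H$) and $B_Q, B_{\tilde Q}$ are self-adjoint (being real polynomials in $e^{-tH}$), I rewrite
$$\langle T a, b\rangle = \langle T B_Q B_{\tilde Q} f_Q, g_{\tilde Q}\rangle.$$
Next, using Proposition \ref{prop_basique}(e) with the exponent $m$ appearing in the hypothesis $(H_m(A))$, insert the Calder\'on-type reproducing formula $Id = c_{m,1}\int_0^\infty \psi_m(sH)\,\frac{ds}{s} + P_{N(H)}$. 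Because $B_Q$ annihilates $N(H)$ (since $e^{-tH}$ acts as the identity on $N(H)$, so $(1-e^{-r^2H})^M$ acts as $0$), the projection drops out and we get
$$\langle T a, b\rangle = c \int_0^\infty \langle T \psi_m(sH)\, B_Q B_{\tilde Q} f_Q, g_{\tilde Q}\rangle\,\frac{ds}{s}.$$

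\textbf{Splitting of the scale integral and use of $(H_m(A))$.} Divide $(0,\infty)$ into three ranges according to the scale $\sqrt s$ relative to $r$ and $\tilde r$: $s \leq r^2$, $r^2 \leq s \leq \tilde r^2$, and $s \geq \tilde r^2$. For each $s$, I want to apply Property $(H_m(A))$ at scale $\rho = \sqrt s$, covering $Q$ and $\tilde Q$ by balls of radius $\rho$. When $\rho \leq r$, the covering of $Q$ (resp.\ $\tilde Q$) uses $\sim (r/\rho)^d$ (resp.\ $\sim (\tilde r/\rho)^d$) balls with bounded overlap; Cauchy--Schwarz on the double sum combined with the atom normalization $\|f_Q\|_2 \leq \mu(Q)^{-1/2}$ and $\|g_{\tilde Q}\|_2 \leq \mu(\tilde Q)^{-1/2}$ yields a bound of size $\lesssim A$ for the bilinear form at that scale, because $\sum_k \mu(B_k)^{1/2}\|f_Q\|_{L^2(B_k)} \leq \mu(Q)^{1/2}\|f_Q\|_2 \leq 1$. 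When $\rho \geq \tilde r$, the single ball of radius $\rho$ containing $Q$ (resp.\ $\tilde Q$) has measure controlled by $(\rho/r)^d \mu(Q)$ (resp.\ $(\rho/\tilde r)^d\mu(\tilde Q)$) by doubling.

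\textbf{Gaining smallness from $B_Q$ and $B_{\tilde Q}$.} The naive estimate alone is not enough for large $s$: to kill the integral at the tails one uses the spectral smallness of $B_Q$ and $B_{\tilde Q}$ against $\psi_m(sH)$. From the functional calculus bound
$$\|B_Q\psi_m(sH)\|_{L^2\to L^2} \leq \sup_\lambda (1-e^{-r^2\lambda})^M (s\lambda)^m e^{-s\lambda} \lesssim \min\!\left(1,\left(\tfrac{r^2}{s}\right)^{M}\right),$$
and similarly for $B_{\tilde Q}$, one obtains a gain of $(r/\sqrt s)^{2M}$ for $s \gtrsim r^2$ and a further gain of $(\tilde r/\sqrt s)^{2M}$ for $s \gtrsim \tilde r^2$. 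Combined with the doubling-type loss $(\rho/r)^{d/2}(\rho/\tilde r)^{d/2}$ of the covering argument, the condition $m \leq \tfrac{4M}{3}$ (so in particular $M \geq \tfrac{3d}{8}+\tfrac34$, giving $2M \gg d/2$) ensures that the integrand in each of the three regions is integrable in $ds/s$ and contributes at most $\lesssim A$.

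\textbf{Main obstacle.} The delicate point is the intermediate range $r^2 \leq s \leq \tilde r^2$: here only $B_Q$ gives a spectral gain while the covering of $\tilde Q$ costs $(\tilde r/\sqrt s)^d$ balls. One must check that the product $(r/\sqrt s)^{2M}\cdot(\tilde r/\sqrt s)^{d/2}(\sqrt s/r)^{d/2}$ integrated in $ds/s$ over $[r^2,\tilde r^2]$ stays bounded independent of $r/\tilde r$, which is exactly what the constraint $M \geq \tfrac{3d}{8}+\tfrac34$ (equivalently $2M - d/2 \geq 3/2$) was designed to provide. Summing the three contributions then yields $|\langle Ta,b\rangle| \lesssim A$, uniformly in the atoms.
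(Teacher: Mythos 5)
Your approach is genuinely different from the paper's: rather than unfolding each atom $a=(1-e^{-r_1^2H})^Mf$ into a scale integral $\int_0^{Mr_1^2}I_M(u)\psi_M(uH)f\,\frac{du}{u^M}$ (and similarly for $b$) and then exploiting the resulting \emph{two} scale parameters $u,v$, you insert a single Calder\'on reproducing formula $\mathrm{Id}=c\int_0^\infty\psi_m(sH)\,\frac{ds}{s}+P_{N(H)}$ and then try to control each scale $s$ by the spectral smallness $\|B_Q\psi_m(sH)\|_{L^2\to L^2}\lesssim\min(1,(r^2/s)^M)$. The observation that $B_Q$ kills $P_{N(H)}$ is correct, and your treatment of the large-$s$ tail via the $(r^2/s)^M$ gain is in the right spirit.

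However, there is a genuine gap at \emph{small} scales. For $s\leq r^2$ neither $B_Q$ nor $B_{\tilde Q}$ yields any decay (your own bound $\min(1,(r^2/s)^M)$ is identically $1$ there), and your covering argument plus $(H_m(A))$ gives only $|\langle T\psi_m(sH)B_QB_{\tilde Q}f_Q,g_{\tilde Q}\rangle|\lesssim A$ at each fixed $s$. Feeding this pointwise-in-$s$ bound into $\int_0^{r^2}\!\cdot\,\frac{ds}{s}$ produces a logarithmic divergence, so the argument as written does not close. The paper avoids this precisely because both $f$ and $g$ carry their own $\psi$-factor (coming from the two scale variables $u$ and $v$), which allows the small-scale regime (Case 1, term $I$) to be handled by Cauchy--Schwarz in $\frac{dv}{v}$ together with the square-function estimate of Proposition \ref{prop_basique}(d): $\int_0^\infty\|\psi_{m,n}(vH)h\|_{L^2}^2\,\frac{dv}{v}\lesssim\|h\|_{L^2}^2$. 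Note that $\int_0^\infty\|\psi(sH)h\|_{L^2}\,\frac{ds}{s}$ is \emph{not} controlled by $\|h\|_{L^2}$ — only its square is — so you cannot substitute the one-sided version. To repair your route you would need to split $\psi_m(sH)=\psi_{m_1,n_1}(sH)\psi_{m_2,n_2}(sH)\psi_{m_3,n_3}(sH)$, leave $\psi_{m_1,n_1}(sH)$ with $T$, send one factor onto $f_Q$ and one onto $g_{\tilde Q}$, and then run Cauchy--Schwarz in $\frac{ds}{s}$ as in the paper; after that fix your structure essentially reproduces the paper's proof, just reached through the reproducing formula instead of through the telescoping $(1-e^{-r^2H})^M=\bigl(\int_0^{r^2}He^{-sH}ds\bigr)^M$ identity. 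Separately, you should be explicit about why the double sum over the covering balls $B_k$ is effectively finite (Gaussian off-diagonal decay must be invoked, as in the paper's corona argument), since $B_QB_{\tilde Q}f_Q$ is no longer compactly supported.
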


\begin{proof}
Let $a$ and $b$ be two atoms. By definition, there exists $B_1$ and $B_2$ two balls
with radiuses $r_1$ and $r_2$ respectively, and $f\in L^{2}(B_1) \virg g\in
L^{2}(B_2)$, such that
$$\begin{cases}
 a=(1-e^{-r_1^2H})^Mf \quad \textrm{with } \|f\|_{L^{2}(B_1)} \leq \mu(B_1)^{-\frac
12}\\
 b=(1-e^{-r_2^2H})^Mg \quad \textrm{with } \|g\|_{L^{2}(B_2)} \leq \mu(B_2)^{-\frac
12}\\
\end{cases}.$$
We first remark by (c) of proposition \ref{prop_basique} that: 
\begin{align*}
 a&=\left ( \int_0^{r_1^2} He^{-sH}ds \right)^M f
=\int_0^{r_1^2}\ldots\int_0^{r_1^2} H^Me^{-(s_1+\ldots+s_M)H}fds_1\ldots ds_M \\
 &=\int_0^{Mr_1^2} \underbrace{\left(\int_{\underset{0\leq s_i \leq
r_1^2}{s_1+\ldots +s_M=u}}ds_1 \ldots ds_{M-1} \right)}_{=I_M(u)} H^M e^{-uH}fdu.
\end{align*}
As $s_i \geq 0$ for all $i\in \{1, \ldots ,M\}$ with $s_1+\cdots +s_M=u$, we have: 
$0\leq s_i \leq u$. \\
Hence $$I_M(u) \leq u^{M-1}.$$ 
Thus $$\< Ta,b\>=\int_0^{Mr_1^2} \int_0^{Mr_2^2} I_M(u) I_M(v) \< T
\psi_{M}(uH)f,\psi_{M}(vH)g \>\frac{dv}{v^M} \frac{du}{u^M}.$$
Moreover $\psi_{M}$ is continuous and $H$ is self-adjoint, so $\psi_{M}(uH)$ and
$\psi_{M}(vH)$ are also self-adjoint.
Using (a) and (b) of Proposition \ref{prop_basique} and the fact that $T$ commutes 
with $H$ (and so with every operator $\psi_{m,n}(H)$), we get: 
\begin{align*}
 |\< T a,b\>|&\leq \int_0^{Mr_1^2} \int_0^{Mr_2^2}
|\<T_t(H)\psi_{M,1}(uH)\psi_{\frac M3, \frac 13}(vH) \psi_{\frac M3, \frac
13}(vH)f,\psi_{\frac M3, \frac 13}(vh)g\>|\frac{du}{u} \frac{dv}{v} \\
 &= \iint |\<T_t(H)\frac{(uv^{\frac 13})^M}{(u+\frac v3)^{\frac {4M}3}} \psi_{\frac
{4M}3, 1}((u+\frac v3 )H)\psi_{\frac M3,\frac 13}(vH)f,\psi_{\frac M3, \frac
13}(vh)g\>|\frac{du}u \frac{dv}v.
\end{align*}
Here we have decomposed $\psi_{M,1}$ in three terms involving $\psi_{\frac M3, \frac
13}$. We aim to use in particular the Gaussian estimates \eqref{UEmn},
which hold only if $\frac{M}{3}$ is an integer. We should decompose $M=M_1+M_2+M_3$
with $3$ integers $M_1,M_2,M_3$ which are comparable to $M/3$ (that is why we picked $M\geq3$). For simplicity we
take $M_1=M_2=M_3=M/3$ and assume that they are integers. We let to the reader the
minor modifications.

Without losing generality because the problem is symmetric in $u$ and $v$, we can
assume that $u \leq v$ so that $ \frac v3 \leq u+\frac v3 \leq \frac{4v}{3} $. Hence
$\frac{uv^{\frac13}}{(u+\frac v3)^{\frac43}}\simeq \frac uv$.
We cover the whole space $X$ by balls $B_j$ and $B_k$ of radius $\sqrt{u+\frac v3}$.
The covering satisfies the bounded overlap property. We use Cauchy-Schwarz
inequality and Property \eqref{disp} to obtain: 
\begin{align*} 
 &|\< Ta,b\>| \leq \\
 &\leq \iint\sum_{j, k} \left(\frac uv \right)^M |\<\mathbf1_{B_k}T \psi_{\frac
{4M}3, 1}((u+\frac v3 )H)\mathbf1_{B_j}\psi_{\frac M3,
\frac13}(vH)f,\mathbf1_{B_k}\psi_{\frac M3, \frac 13}(vH)g\>|\frac{du}u \frac{dv}v
\\
 &\lesssim \iint\sum_{j, k} \left(\frac{u}{v}\right)^M \|T \psi_{\frac {4M}3,
1}((u+\frac v3 )H)\mathbf1_{B_j}\psi_{\frac M3, \frac13}(vH)f\|_{L^2(B_k)}
\|\psi_{\frac M3, \frac 13}(vH)g\|_{L^2(B_k)}\frac{du}u \frac{dv}v \\
 &\lesssim \iint\sum_{j, k} \left(\frac{u}{v}\right)^M A \mu(B_k)^{\frac 12}
\mu(B_j)^{\frac 12}\|\psi_{\frac M3, \frac13}(vH)f\|_{L^2(B_j)} \|\psi_{\frac M3,
\frac 13}(vH)g\|_{L^2(B_k)}\frac{du}u \frac{dv}v,
\end{align*}
where we have used that $T$ satisfies Property $(H_{4M/3})$. Indeed $T$ satisfies
property \eqref{disp} for $m\leq4M/3$ (so $T$ satisfies also $(H_{4M/3})$ by Theorem
\ref{thm:H}).
To simplify the notation we will now note $\psi_{\frac M3, \frac 13}=\psi$. We use a
decomposition in dyadic coronas around $B_1$: 
$$ \sum_{j\in J} \mu(B_j)^{\frac 12} \|\psi(vH)f\|_{L^2(B_j)}\leq \sum_{j \in J}
\sum_{l=0}^{+\infty} \mu(B_j)^{\frac 12} \|\mathbf1_{C_l(B_1)}
\psi(vH)f\|_{L^2(B_j)}.$$
We study the terms $l=0$ and $l \geq 1$ separately. \\
First when $l=0$:
\begin{align*}
 \sum_{j\in J} \mu(B_j)^{\frac 12} \|\mathbf1_{C_0(B_1)}\psi(vH)f\|_{L^2(B_j)} &\leq
\left( \sum_J \mu(B_j) \right)^{\frac12} \left( \sum_J \|\mathbf1_{B_1}
\psi(vH)f\|_{L^2(B_j)}^2 \right)^{\frac 12} \\
  &\lesssim \mu(B_1)^{\frac12} \left( \sum_J \int_{B_j} |\mathbf1_{B_1}(x)
\psi(vH)f(x)|^2 d\mu(x) \right)^{\frac12} \\
  &\lesssim \mu(B_1)^{\frac12} \|\psi(vH)f\|_{L^2}.
\end{align*}
Now when $l\geq 1$ the number of indices in $J$ for which the sum is nonzero is
equivalent to the number of balls of radius $\sqrt{u+\frac v3}$ we need to cover
$C_l(B_1)$, that is $|J| \simeq \left( \frac{2^lr_1}{\sqrt{u+\frac v3}} \right)^d$. 

Now, remark that by the doubling property of the measure and \eqref{ball_behaviour},
since $B_j$ is a ball of radius $\sqrt{u+v/3} \simeq \sqrt{v}$ we deduce that for
$x\in B_j \cap C_l(B_1)$ then 
$$\mu(B(x,\sqrt v)) \simeq \mu(B_j).$$ 

By \eqref{UEmn}, we have:
\begin{align*}
 \sum_{j\in J} \sum_{l=1}^{+\infty} \mu(B_j)^{\frac 12} \|\mathbf1_{C_l(B_1)}
\psi(vH)f\|_{L^2(B_j)} & \lesssim \sum_{j\in J} \sum_{l=1}^{+\infty}
\mu(B_j)^{\frac 12}\left \|\frac{\mathbf1_{C_l(B_1)}(x)}{\mu(B(x,\sqrt
v))}e^{-\frac{2^{2l}r_1^2}{v}} \|f\|_{L^1(B_1)}\right \|_{L^2_x(B_j)} \\
 &\lesssim \sum_{j\in J} \sum_{l=1}^{+\infty} \mu(B_j)^{\frac 12} \mu(B_j)^{\frac
12} \frac{1}{\mu(B_j)}e^{-\frac{2^{2l}r_1^2}{v}} \\
 &\lesssim \sum_{l\geq 1} \left( \frac{2^lr_1}{\sqrt{u+\frac v3}}
\right)^de^{-\frac{2^{2l}r_1^2}{v}} \lesssim \sum_{l\geq 1}\left(\frac{2^l
r_1}{\sqrt v} \right)^d e^{-\frac{2^{2l}r_1^2}{v}},
\end{align*}
where we have used the $L^2$-normalization of $f$, which yields that $\|f\|_{L^1} \lesssim 1$.

We then refer the reader to Lemma \ref{lem_calcul} to estimate the sum and it comes
$$\sum_{j\in J} \sum_{l=1}^{+\infty} \mu(B_j)^{\frac12}\|\chi_{C_l(B_1)}
\psi(vH)f\|_{L^2(B_j)} \lesssim \left( \frac{r_1}{\sqrt v}\right)^{-1}.$$
Thus $$\sum_{j\in J}\mu(B_j)^{\frac12}\|\psi(vH)f\|_{L^2(B_j)} \leq
\mu(B_1)^{\frac12} \|\psi(vH)f\|_{L^2} + \frac{\sqrt v}{r_1}.$$
Similarly for $B_2$ and the sum over $k \in K$: $$\sum_{k\in
K}\mu(B_k)^{\frac12}\|\psi(vH)g\|_{L^2(B_k)} \leq \mu(B_2)^{\frac12}
\|\psi(vH)g\|_{L^2} + \frac{\sqrt v}{r_2}.$$
Hence, one concludes: 
\begin{align*}
\lefteqn{| \langle T a,b \rangle |} & & \\
& &  \lesssim A \iint &\left(\frac uv \right)^M \left( \mu(B_1)^{\frac12}
\|\psi(vH)f\|_{L^2} + \frac{\sqrt v}{r_1} \right) \left( \mu(B_2)^{\frac12}
\|\psi(vH)g\|_{L^2} + \frac{\sqrt v}{r_2} \right) \frac{du}{u}\frac{dv}{v}.
\end{align*}
We then develop the product to split the problem into four different terms: 
\begin{align*}
I& =\iint \left( \frac uv \right)^M \mu(B_1)^{\frac 12}
\mu(B_2)^{\frac12}\|\psi(vH)f\|_{L^2} \|\psi(vH)g\|_{L^2} \frac{du}{u}\frac{dv}{v},
\\
II& =\iint \left( \frac uv \right)^M \mu(B_1)^{\frac12}\|\psi(vH)f\|_{L^2}
\frac{\sqrt v}{r_2} \frac{du}{u}\frac{dv}{v},\\
III& =\iint \left( \frac uv \right)^M \mu(B_2)^{\frac12}\|\psi(vH)g\|_{L^2}
\frac{\sqrt v}{r_1} \frac{du}{u}\frac{dv}{v},\\
IV& =\iint \left( \frac uv \right)^M \frac{\sqrt v}{r_1}\frac{\sqrt v}{r_2}
\frac{du}{u}\frac{dv}{v}. 
\end{align*}

We discern now two cases: 

\medskip
\noindent
{\bf Case 1:} $0\leq u \leq v \leq R=\min(Mr_1^2,Mr_2^2)$. 

Then Item (d) of Proposition \ref{prop_basique} yields
\begin{align*}
 I&= \int_{v=0}^{R} \int_{u=0}^{v} \left( \frac uv \right)^M \mu(B_1)^{\frac 12}
\mu(B_2)^{\frac12}\|\psi(vH)f\|_{L^2} \|\psi(vH)g\|_{L^2} \frac{du}{u}\frac{dv}{v}
\\
 &=\mu(B_1)^{\frac 12} \mu(B_2)^{\frac12} \int_0^R \|\psi(vH)f\|_{L^2}
\|\psi(vH)g\|_{L^2} \frac{dv}{v} \\
 &\leq \mu(B_1)^{\frac 12} \mu(B_2)^{\frac12} \left(\int_0^{+\infty}
\|\psi(vH)f\|_{L^2}^2 \frac{dv}{v}\right)^{\frac 12}
\left(\int_0^{+\infty}\|\psi(vH)g\|_{L^2}^2 \frac{dv}{v}\right)^{\frac12} \\
 &\leq \mu(B_1)^{\frac 12}\|f\|_{L^2} \mu(B_2)^{\frac12}\|g\|_{L^2} \lesssim 1,
\end{align*}
Similarly for the second term, 
\begin{align*}
 II=& \mu(B_1)^{\frac 12} \frac{1}{r_2}\int_0^R \|\psi(vH)f\|_{L^2} \sqrt v
\frac{dv}{v} \\
 &\leq \mu(B_1)^{\frac 12} \frac{1}{r_2}\|f\|_{L^2} \left(\int_0^R v
\frac{dv}{v}\right)^{\frac 12}\leq \frac{R^{\frac12}}{r_2}\leq
\frac{\sqrt{Mr_2^2}}{r_2}\lesssim 1.
\end{align*}
The third term is treated the same way.
The fourth term gives: $$IV=\frac{1}{r_1r_2}\int_0^R \sqrt v \sqrt v \frac{dv}{v} =
\frac{R}{r_1r_2} \leq
\frac{\min(Mr_1^2,Mr_2^2)}{\sqrt{\min(r_1^2,r_2^2)}\sqrt{\min(r_1^2,r_2^2)}}\lesssim
1.$$
So in this first case, we obtain
\begin{equation}
 I+II+III+IV \lesssim 1. \label{case1}
\end{equation}

\medskip
\noindent
{\bf Case 2:} $0\leq u \leq Mr_1^2 \leq v \leq Mr_2^2$.

Similarly we get: 
\begin{align*}
 I&= \mu(B_1)^{\frac 12} \mu(B_2)^{\frac12} \int_{v=Mr_1^2}^{Mr_2^2}
\int_{u=0}^{Mr_1^2} \left(\frac uv \right)^M \|\psi(vH)f\|_{L^2}
\|\psi(vH)g\|_{L^2} \frac{du}{u} \frac{dv}{v} \\
 &\leq \mu(B_1)^{\frac 12} \mu(B_2)^{\frac12} \int_{v=Mr_1^2}^{Mr_2^2}
\int_{u=0}^{Mr_1^2} \frac{uv^{M-1}}{v^M} \|\psi(vH)f\|_{L^2} \|\psi(vH)g\|_{L^2}
\frac{du}{u} \frac{dv}{v} \\
 &=\mu(B_1)^{\frac 12} \mu(B_2)^{\frac12} \int_{v=Mr_1^2}^{Mr_2^2} \frac{Mr_1^2}{v}
\|\psi(vH)f\|_{L^2} \|\psi(vH)g\|_{L^2} \frac{dv}{v} \\
 &\leq \mu(B_1)^{\frac12}\mu(B_2)^{\frac12} Mr_1^2 \sup \limits_{v
\in[Mr_1^2,Mr_2^2]} \frac1v \left( \int_0^{Mr_2^2} \|\psi(vH)f\|_{L^2}^2
\frac{dv}{v} \right)^{\frac12} \left( \int_0^{Mr_2^2} \|\psi(vH)g\|_{L^2}^2
\frac{dv}{v} \right)^{\frac 12} \\
 &\leq \frac{Mr_1^2}{Mr_1^2} = 1.
\end{align*}
For the second term, since $r_1 \leq r_2$:$$ II\leq \mu(B_1)^{\frac 12}
\frac{Mr_1^2}{r_2}\left( \int_0^{Mr_1^2} \|\psi(vH)f\|_{L^2}^2 \frac{dv}{v}
\right)^{\frac12} \left( \int_{Mr_1^2}^{+\infty}\frac{1}{v}\frac{dv}{v}
\right)^{\frac12}\lesssim \frac{r_1^2}{r_2r_1} \leq 1.$$
The third term is similar: $$III\leq
\mu(B_2)^{\frac12}Mr_1^2\int_{Mr_1^2}^{Mr_2^2}\frac{1}{v}\|\psi(vH)g\|_{L^2}
\frac{\sqrt v}{r_1}\frac{dv}{v}\lesssim r_1 \left( \int_{Mr_1^2}^{+\infty}
\frac{1}{v}\frac{dv}{v} \right)^{\frac12} \lesssim \frac{r_1}{r_1}=1.$$
Finally we treat the last term: $$ IV\leq
\int_{Mr_1^2}^{Mr_2^2}\frac{Mr_1^2}{v}\frac{\sqrt v}{r_1}\frac{\sqrt
v}{r_2}\frac{dv}{v}\lesssim
\frac{r_1}{r_2}\int_{Mr_1^2}^{Mr_2^2}\frac{1}{v}\frac{dv}{v}=2\frac{r_1}{r_2}\ln\left(
\frac{r_2}{r_1} \right)\lesssim1,$$
because $x\mapsto \frac{\ln x}{x}$ is continuous if $x\geq1$, equals $0$ if $x=1$
and tends to $0$ as $x$ tends to $+\infty$, so is bounded uniformly in $x\geq 1$
(here $\frac{r_2}{r_1}\geq 1$).

Thus, in this second case, we also conclude that
\begin{equation}
 I+II+III+IV \lesssim 1. \label{case2}
\end{equation}

Since $u\leq v$ (which was assumed at the beginning by symmetry), cases 1 and 2
cover all the possible situations.
Consequently, we deduce that for all atoms $a$ and $b$, we have 
$$|\<Ta,b\>|\lesssim A,$$
where the implicit constant does not depend on the atoms (but maybe on the
parameters $M$ and $m$).
\end{proof}

%

We used the following lemma with $N=1$ and $x=\frac{r}{\sqrt v}$:
\begin{lem}\label{lem_calcul}
 Let $x>0$ and $d \in \mathbb N$. For all $N \in \mathbb N^*$: 
 $$\sum_{l=0}^{+\infty} (2^lx)^d
e^{-(2^lx)^2} \lesssim x^{-N}.$$
\end{lem}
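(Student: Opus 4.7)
The plan is to exploit the super-polynomial decay of the function $y \mapsto y^d e^{-y^2}$ by splitting the sum at the natural threshold $2^l x \simeq 1$, and to use the doubling of the argument $2^l x$ to gather geometric series.

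First I would handle the \emph{large $x$ regime} $x \geq 1$. Setting $y_l = 2^l x$, every $y_l \geq 1$, so
$$ y_l^d e^{-y_l^2} \leq C_d\, e^{-y_l^2/2} \leq C_d\, e^{-y_l/2}, $$
and since $y_l = 2^l x$, the series $\sum_l e^{-2^{l-1} x}$ is dominated by its first term when $x \geq 1$, yielding a bound of order $e^{-x/2}$. For any fixed $N$ this is $\lesssim x^{-N}$, uniformly in $x \geq 1$.

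Next I would treat the \emph{small $x$ regime} $0 < x \leq 1$ by splitting at $l_0 := \lceil \log_2(1/x) \rceil$, so that $2^{l_0} x \simeq 1$. For the low part $l < l_0$ I drop the exponential factor and estimate
$$ \sum_{l=0}^{l_0-1} (2^l x)^d \, e^{-(2^l x)^2} \leq x^d \sum_{l=0}^{l_0-1} 2^{ld} \lesssim x^d \cdot 2^{l_0 d} \lesssim 1, $$
where the geometric sum telescopes because $d \geq 0$. For the tail $l \geq l_0$ I apply the argument from the large $x$ case, reindexed so that the doubling starts at $2^{l_0} x \simeq 1$, giving again a bound of order $1$. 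Combining, the whole sum is $\lesssim 1$, and since $x \leq 1$ we have $1 \leq x^{-N}$.

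There is essentially no obstacle here: the inequality is quite generous (the right-hand side is only an inverse power while the left-hand side has Gaussian decay in $2^l x$), so the only point requiring care is keeping track of the two regimes $x \leq 1$ and $x \geq 1$ and using the doubling structure $y_{l+1} = 2 y_l$ to bound the tail by a geometric series dominated by its first term.
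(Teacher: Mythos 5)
Your proof is correct in substance, but it takes a genuinely different route from the paper. The paper avoids any case split in $x$: it converts the discrete sum to an integral using the identity $\int_{2^l}^{2^{l+1}} \frac{dt}{t}=\ln 2$, obtaining $\sum_l (2^lx)^d e^{-(2^lx)^2} \lesssim \int_1^\infty (tx)^d e^{-(tx)^2/4}\frac{dt}{t}$, then substitutes $u=tx/2$ to get $\int_{x/2}^\infty (2u)^d e^{-u^2}\frac{du}{u}$, and finally bounds the integrand by $u^{-N}$ (valid uniformly in $u>0$ since $u^{N+d}e^{-u^2}$ is bounded) to evaluate $\int_{x/2}^\infty u^{-N-1}du \simeq x^{-N}$. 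This handles all $x>0$ and all $d\in\mathbb N$ (including $d=0$) in a single stroke. Your approach stays discrete, splitting at the threshold index $l_0$ with $2^{l_0}x\simeq 1$ and at the regime boundary $x\simeq 1$, using geometric series for the tail; this is more elementary and arguably more transparent about where the mass of the sum sits, at the cost of the two-regime bookkeeping.

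One small caveat in your write-up: the step ``$\sum_{l=0}^{l_0-1} 2^{ld}\lesssim 2^{l_0 d}$ because $d\ge 0$'' fails at $d=0$, where the left side is $l_0\simeq \log_2(1/x)$ rather than $O(1)$. This does not hurt the lemma itself, since for $x\le 1$ and $N\ge 1$ one still has $\log(1/x)\lesssim x^{-N}$, but as written your claim that the small-$x$ part is $\lesssim 1$ is false for $d=0$; either assume $d\ge 1$ (which is the only case used in the paper, $d$ being the homogeneous dimension) or replace the intermediate bound by $\lesssim 1+\log_+(1/x)$.
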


\begin{proof}
We remark that
$\int_{2^l}^{2^{l+1}}\frac{dt}{t}=\ln\left(\frac{2^{l+1}}{2^l}\right)=\ln2$. Thus: 
$$\sum_{l=0}^{+\infty}(2^lx)^d e^{-(2^lx)^2}
= \frac{1}{\ln2} \sum_{l=0}^{+\infty }(2^lx)^d
e^{-(2^lx)^2}\int_{2^l}^{2^{l+1}}\frac{dt}{t}.$$ 
$2^l \leq t \leq 2^{l+1}$ yields $(2^lx)^d\leq
(tx)^d$ and $e^{-(tx)^2}\geq e^{-(2^{l+1}x)^2}$. So we have: $e^{-(2^lx)^2}\leq e^{-\frac{(tx)^2}{4}}$.
Hence: 
\begin{align*}
 \sum_{l=0}^{+\infty} ( 2^lx)^d e^{-(2^lx)^2}
& \lesssim \int_1^{+\infty} (tx)^d e^{-\frac{(tx)^2}{4}}\frac{dt}{t} =\int_{\frac{x}{2}}^{+\infty}
(2u)^d e^{-u^2} \frac{du}{u} \\
 &\lesssim \int_{\frac{x}{2}}^{+\infty} \frac{1}{u^N} \frac{du}{u} = \left[
\frac{u^{-N}}{-N}\right]_{\frac{x}{2}}^{+\infty} \lesssim x^{-N}
\end{align*}
for $N\in \mathbb N^*$ as large as we want.
\end{proof}

\subsection{Boundedness on Hardy space}

After having proved that the operator $T$ (of Theorem \ref{thm1}) is bounded on
atoms, we now aim to show that $T$ is bounded from the Hardy space $H^1$ to its dual
$(H^1)^*$ (and more precisely to $\BMO$) with a norm controlled by $A$. 
If $f\in H^1$ then there exists an atomic decomposition $f=\sum_{i=0}^{+\infty}
\lambda_i a_i$ where $a_i$ are atoms and $\sum_{i=0}^{+\infty} |\lambda_i| <
2\|f\|_{H^1}$.
We know how to bound the operator on atoms, we would like to extend it passing to
the limit in 
$$T \left(\sum_{i=0}^{N} \lambda_i a_i\right)=\sum_{i=0}^N \lambda_i T a_i$$ 
in order to apply Theorem \ref{thm_atome_1}.
As $N$ goes to $+\infty$, that last equality may not be true. Indeed, one can find
in \cite{Bow} an example (due to Meyer) of a linear form bounded on atoms, which
is not bounded on the whole Hardy space. So to rigorously check this step, we need
to prove it using specificities of our situation. Aiming that, we are going to use
an approximation of the identity well suited to our frame: $(e^{-sH})_{s>0}$.\\

We start by showing that $Te^{-sH}$ (the regularized version of $T$) satisfies the
same estimate as the one in Theorem \ref{thm_atome_1}: 
\begin{thm}\label{thm_atome_2} Assume \eqref{dd} and \eqref{due}. Consider a fixed
operator $T$, $L^2$-bounded, commuting with $H$ and satisfying Property
\eqref{disp} for some integer $m \in [\frac d2,\frac{4M}{3}]$. Then
uniformly with respect to $s>0$, the operator $T e^{-sH}$ satisfies Property
\eqref{disp} and so by Theorem \ref{thm_atome_1}:
$$ \sup_{s>0} \ \sup_{a,b} |\<Te^{-sH}a,b\>| \lesssim A,$$
where the supremum is taken over all the atoms $a,b$.
\end{thm}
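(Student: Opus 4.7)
The plan is to reduce Property \eqref{disp} for $Te^{-sH}$ to Property \eqref{disp} for $T$ itself, and then to invoke Theorem \ref{thm_atome_1}. The key identity is obtained by simply absorbing $e^{-sH}$ into the heat kernel factor of $\psi_m$: for every $s>0$ and $r>0$,
$$e^{-sH}\psi_m(r^2 H) = r^{2m} H^m e^{-(r^2+s)H} = \psi_{m,n}(r^2 H), \qquad n:=1+\frac{s}{r^2}\geq 1,$$
where $\psi_{m,n}(x)=x^m e^{-nx}$. So the question becomes: does $T\psi_{m,n}(r^2H)$ satisfy the same $L^2(B_r)\to L^2(\widetilde{B_r})$ bound as $T\psi_m(r^2H)$, with an implicit constant \emph{uniform in $n\geq 1$}? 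Note that $Te^{-sH}$ is $L^2$-bounded with norm $\lesssim 1$ and commutes with $H$ (since $e^{-sH}$ belongs to the functional calculus of $H$), so once the uniform bound is established, Theorem \ref{thm_atome_1} applies verbatim to $Te^{-sH}$ and gives the claimed uniform estimate $\sup_{s>0}\sup_{a,b}|\langle Te^{-sH}a,b\rangle|\lesssim A$.

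To prove the uniform bound, I rescale: $\psi_{m,n}(r^2H)=n^{-m}\psi_m((\sqrt n\, r)^2 H)$. Since $n\geq 1$, the concentric dilations satisfy $B_r\subset \sqrt n B_r$ and $\widetilde{B_r}\subset \sqrt n \widetilde{B_r}$, so
$$\|T\psi_m((\sqrt n\, r)^2 H)\|_{L^2(B_r)\to L^2(\widetilde{B_r})}\leq \|T\psi_m((\sqrt n\, r)^2 H)\|_{L^2(\sqrt n B_r)\to L^2(\sqrt n \widetilde{B_r})}.$$
Applying the hypothesis \eqref{disp} for $T$ at scale $\sqrt n\, r$ with the balls $\sqrt n B_r$ and $\sqrt n \widetilde{B_r}$ (of that radius) yields
$$\|T\psi_m((\sqrt n\, r)^2 H)\|_{L^2(\sqrt n B_r)\to L^2(\sqrt n \widetilde{B_r})}\lesssim A\,\mu(\sqrt n B_r)^{1/2}\mu(\sqrt n \widetilde{B_r})^{1/2}.$$
The doubling property \eqref{dd} then gives $\mu(\sqrt n B_r)\lesssim n^{d/2}\mu(B_r)$ for $n\geq 1$ (and similarly for $\widetilde{B_r}$), so collecting the prefactor $n^{-m}$ we obtain
$$\|T\psi_{m,n}(r^2 H)\|_{L^2(B_r)\to L^2(\widetilde{B_r})}\lesssim n^{-m+d/2}\,A\,\mu(B_r)^{1/2}\mu(\widetilde{B_r})^{1/2}.$$

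The uniformity in $n\geq 1$ is now immediate from the hypothesis $m\geq d/2$, which ensures $n^{-m+d/2}\leq 1$. This is the only point where a nontrivial condition is used, and it is exactly the dimensional threshold appearing in the statement; everything else is either a one-line identity (collapsing $e^{-sH}\psi_m$ into $\psi_{m,n}$) or a rescaled invocation of the assumption already made on $T$. The conclusion follows by plugging the uniform estimate into Theorem \ref{thm_atome_1}. The main (mild) obstacle is simply to observe that the doubling exponent $d/2$ produced by enlarging the balls is compensated by the algebraic gain $n^{-m}$ coming from the rescaling of $\psi_{m,n}$; without the condition $m\geq d/2$, the argument would not close uniformly in $s$.
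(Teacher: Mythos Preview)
Your proof is correct and is essentially identical to the paper's own argument: the paper also collapses $e^{-sH}\psi_{m,1}(r^2H)$ into $\left(\frac{r^2}{r^2+s}\right)^m\psi_{m,1}((r^2+s)H)$, enlarges the balls to radius $\sqrt{r^2+s}$, applies \eqref{disp} at that scale, and uses doubling together with $m\geq d/2$ to absorb the volume growth. Your parameter $n=1+s/r^2$ is just a relabeling of the paper's $(r^2+s)/r^2$, and in fact your computation is precisely the $n\geq 1$ case of Proposition~\ref{prop_hyp_1}.
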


\begin{proof}
Set $U_{s}:=Te^{-sH}$. It suffices to check that $U_{s}$ satisfies Property
\eqref{disp} uniformly in $s$, which is 
$$\|U_{s}\psi_{m,1}(r^2H)\|_{L^2(B_r)\to L^2(\widetilde{B_r})} \lesssim t^{-\frac
d2} \mu(B_r)^{\frac 12} \mu(\widetilde{B_r})^{\frac 12},$$ for any two balls $B_r$
and $\widetilde{B_r}$ with radius $r>0$. 
First, remark that 
$$U_{s}\psi_{m,1}(r^2H)=Te^{-sH}(r^2H)^me^{-r^2H}=T \psi_{m,1}((r^2+s)H)\left(
\frac{r^2}{r^2+s} \right)^m,$$
so that
$$\|U_{s} \psi_{m,1}(r^2H)\|_{L^2(B_r)\to L^2(\widetilde{B_r})}= \left(
\frac{r^2}{r^2+s} \right)^m \|T\psi_{m,1}((r^2+s)H)\|_{L^2(B_r)\to
L^2(\widetilde{B_r})}.$$
As $r^2<r^2+s$, the balls of radius $r$ are included into the balls with same
centers and radius $\sqrt{r^2+s}$ denoted $B_{\sqrt{r^2+s}}$ and
$\widetilde{B_{\sqrt{r^2+s}}}$. 
Then it comes (with Property \eqref{disp} for $T$ and the doubling property)
\begin{align*}
 \|U_{s} \psi_{m,1}(r^2H)\|_{L^2(B_r)\to L^2(\widetilde{B_r})}& \leq \left(
\frac{r^2}{r^2+s} \right)^m \|T\psi_{m,1}((r^2+s)H)\|_{L^2(B_{\sqrt{r^2+s}})\to
L^2(\widetilde{B_{\sqrt{r^2+s}}})} \\
 &\leq \left( \frac{r^2}{r^2+s} \right)^m A \mu(B_{\sqrt{r^2+s}})^{\frac 12}
\mu(\widetilde{B_{\sqrt{r^2+s}}})^{\frac 12} \\
 &\lesssim \left( \frac{r^2}{r^2+s} \right)^m A \sqrt{\frac{r^2+s}{r^2}}^{\frac
d2}\mu(B_r)^{\frac 12} \sqrt{\frac{r^2+s}{r^2}}^{\frac
d2}\mu(\widetilde{B_r})^{\frac 12}\\
 &\leq A \left(\frac{r^2}{r^2+s}\right)^{m-\frac d2} \mu(B_r)^{\frac
12}\mu(\widetilde{B_r})^{\frac 12}\leq A \mu(B_r)^{\frac 12}
\mu(\widetilde{B_r})^{\frac 12},
\end{align*}
where the last inequality comes from $m \geq \frac d2$. That concludes the proof of
Property \eqref{disp} for the operator $U_s$ and all the estimates are uniform with
respect to $s>0$.
\end{proof}

In order to prove that we can pass to the limit as $N$ goes to $+\infty$ in
$$Te^{-sH}\left(\sum_{i=0}^{N} \lambda_i a_i\right)=\sum_{i=0}^N \lambda_i
Te^{-sH}a_i$$ for atoms $a_i$, we have to show some continuity of the operator
$Te^{-sH}$. 
\begin{thm}\label{thm_continuite}
 If $T$ is a $L^2$-bounded operator which commutes with $H$ and the ambient space
$X$ satisfies the uniform control of the volume \eqref{d}, then for all $s>0$: $T
e^{-sH}$ maps $L^1$ to $L^{\infty}$ and
$$ \|Te^{-sH}\|_{L^1 \to L^\infty} \lesssim s^{-\frac \nu 2}.$$
\end{thm}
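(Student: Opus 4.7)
The plan is to exploit the assumption that $T$ commutes with $H$ in order to split the semigroup symmetrically: since $e^{-sH}=e^{-sH/2}\circ e^{-sH/2}$ and functional calculus gives $Te^{-sH/2}=e^{-sH/2}T$, we can write
\[
Te^{-sH} \;=\; e^{-sH/2}\, T\, e^{-sH/2}.
\]
Then the estimate follows by the $L^1\to L^2\to L^2\to L^\infty$ chain, using the $L^2$-boundedness of $T$ in the middle and controlling the two copies of $e^{-sH/2}$ by kernel estimates.

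The key step is to prove that $\|e^{-sH/2}\|_{L^2\to L^\infty}\lesssim s^{-\nu/4}$ (and, dually, $\|e^{-sH/2}\|_{L^1\to L^2}\lesssim s^{-\nu/4}$). For this, recall that \eqref{UE} together with the lower volume bound \eqref{d} yields, for $g\in L^2$ and a.e.\ $x$,
\[
|e^{-sH/2}g(x)| \;\leq\; \int p_{s/2}(x,y)|g(y)|\,d\mu(y) \;\leq\; \|p_{s/2}(x,\cdot)\|_{L^2}\,\|g\|_{L^2}.
\]
A direct computation with the Gaussian bound \eqref{UE} and the doubling property gives
\[
\int |p_{s/2}(x,y)|^2\,d\mu(y)\;\lesssim\;\frac{1}{\mu(B(x,\sqrt{s}))}\;\lesssim\; s^{-\nu/2},
\]
where the last inequality uses \eqref{d}. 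Hence $\|e^{-sH/2}\|_{L^2\to L^\infty}\lesssim s^{-\nu/4}$, and by duality (the semigroup is self-adjoint, since $H$ is) also $\|e^{-sH/2}\|_{L^1\to L^2}\lesssim s^{-\nu/4}$.

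Putting things together, we obtain
\[
\|Te^{-sH}\|_{L^1\to L^\infty}\;\leq\;\|e^{-sH/2}\|_{L^2\to L^\infty}\,\|T\|_{L^2\to L^2}\,\|e^{-sH/2}\|_{L^1\to L^2}\;\lesssim\; s^{-\nu/4}\cdot 1\cdot s^{-\nu/4}\;=\;s^{-\nu/2},
\]
which is the claimed bound. The only subtle point is the application of \eqref{d}, which only holds for $\sqrt{s}\lesssim \min(1,\diam X)$; for larger $s$ one simply composes with a fixed $e^{-s_0H}$ (for some $s_0$ of order $1$) to reduce to this range, using the uniform $L^p$-boundedness of the semigroup from Proposition \ref{prop_continuite_semigroupe}. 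I expect no real obstacle beyond this minor range issue — the heart of the argument is the symmetric factorization enabled by commutativity of $T$ with $H$.
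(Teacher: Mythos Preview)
Your proof is correct and follows essentially the same route as the paper: the symmetric factorization $Te^{-sH}=e^{-sH/2}Te^{-sH/2}$ via commutativity, then the chain $L^1\to L^2\to L^2\to L^\infty$, with the $L^1\to L^2$ and $L^2\to L^\infty$ bounds for $e^{-sH/2}$ coming from the kernel estimates and \eqref{d}. The only cosmetic difference is that the paper obtains the bound $\|e^{-sH/2}\|_{L^1\to L^2}^2$ by the $T^*T$ identity $\|e^{-sH/2}\|_{L^1\to L^2}^2=\|e^{-sH}\|_{L^1\to L^\infty}=\sup_{x,y}p_s(x,y)$, whereas you integrate $|p_{s/2}(x,\cdot)|^2$ directly; both yield the same $\mu(B(x,\sqrt{s}))^{-1}\lesssim s^{-\nu/2}$. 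Your remark about the range of validity of \eqref{d} is a fair caveat that the paper does not spell out.
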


\begin{proof}
By the commutativity property, we write $T e^{-sH} = e^{-sH/2} T e^{-sH/2}$. Hence
$$ \|T e^{-sH}\|_{L^1 \to L^{\infty}} \leq \|e^{-\frac s2 H}\|_{L^1 \to L^2}
\|T\|_{L^2 \to L^2} \|e^{-\frac s2 H}\|_{L^2 \to L^{\infty}} .$$
Using the Gaussian pointwise estimates of the heat kernel and \eqref{d}, we deduce
by a $T^*T$ argument that
\begin{align*}
 \|e^{-\frac s2 H}\|_{L^1 \to L^{2}}^2 & = \| e^{-sH} \|_{L^1 \to L^\infty} \\
 & = \sup_{x,y} p_{s}(x,y) \lesssim s^{-\frac \nu 2}
\end{align*}
and by duality 
\begin{align*}
 \|e^{-\frac s2 H}\|_{L^1 \to L^{2}}= \|e^{-\frac s2 H}\|_{L^2 \to L^{\infty}}
\lesssim s^{-\frac \nu 4}.
\end{align*}
As a consequence, we deduce the desired estimate.
\end{proof}

We are now able to establish the result on the whole Hardy space $H^1$: 
\begin{thm} \label{thm_hardy_1} Assume \eqref{dd}, \eqref{d} and \eqref{due}.
Consider a $L^2$-bounded operator $T$, which commutes with $H$ and which satisfies
Property \eqref{disp} for some integer $m\in[\frac d2,\frac{4M}{3}]$.
Then $T$ and $T e^{-sH}$, for all $s>0$, can be continuously extended from $H^1$ to
$BMO$ (and so in particular to its dual $(H^1)^*$) and we have 
$$ \|T \|_{H^1 \to BMO} + \sup_{s>0} \|T e^{-sH} \|_{H^1 \to BMO} \lesssim A.$$
\end{thm}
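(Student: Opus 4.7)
The plan is to combine the atom-by-atom estimate of Theorem \ref{thm_atome_2} with the BMO characterization from Remark \ref{rem:BMO} via a duality-style argument, using the regularized operator $Te^{-sH}$ as an intermediary to overcome the fact that an arbitrary atomic decomposition of $f\in H^1$ converges in $L^1$ but not necessarily in $L^2$. The structure will be: (i) establish the bound for $Te^{-sH}$ on all of $H^1$ uniformly in $s$; (ii) pass to the limit $s\to 0$ on the dense subspace $L^2\cap H^1$; (iii) extend $T$ by continuity to all of $H^1$.

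For step (i), fix $f\in H^1$ and an atomic decomposition $f=\sum_i \lambda_i a_i$ with $\sum_i|\lambda_i|\leq 2\|f\|_{H^1}$. Since $H^1\hookrightarrow L^1$ (Corollary \ref{injH1L1}), this series converges in $L^1$. By Theorem \ref{thm_continuite}, $Te^{-sH}$ maps $L^1$ to $L^\infty$, so the partial sums $Te^{-sH}(\sum_{i\leq N}\lambda_i a_i)$ converge to $Te^{-sH}f$ in $L^\infty$. For any atom $b$ (which is in $L^1$), we may therefore write $\langle Te^{-sH}f,b\rangle=\sum_i \lambda_i\langle Te^{-sH}a_i,b\rangle$, and Theorem \ref{thm_atome_2} gives $|\langle Te^{-sH}a_i,b\rangle|\lesssim A$ uniformly in $s$. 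Hence $|\langle Te^{-sH}f,b\rangle|\lesssim A\sum_i|\lambda_i|\lesssim A\|f\|_{H^1}$. Taking the supremum over atoms $b$ and invoking the characterization of $\|\cdot\|_{\BMO}$ from Remark \ref{rem:BMO} (applied to $Te^{-sH}f\in L^\infty\subset L^\infty+L^2$) yields $\|Te^{-sH}f\|_{\BMO}\lesssim A\|f\|_{H^1}$ uniformly in $s>0$.

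For step (ii), restrict to $f\in L^2\cap H^1$ (which contains all finite atomic combinations and is dense in $H^1$). Since the heat semigroup is strongly continuous on $L^2$ and $T$ is $L^2$-bounded, $Te^{-sH}f\to Tf$ in $L^2$ as $s\to 0$. Pairing against any atom $b\in L^2$ gives $\langle Te^{-sH}f,b\rangle\to\langle Tf,b\rangle$, so the uniform bound from step (i) passes through and yields $|\langle Tf,b\rangle|\lesssim A\|f\|_{H^1}$. Since $Tf\in L^2$, Remark \ref{rem:BMO} applies directly and gives $\|Tf\|_{\BMO}\lesssim A\|f\|_{H^1}$ on $L^2\cap H^1$. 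Density together with completeness of $\BMO$ extends $T$ to a continuous map $H^1\to\BMO$ with the same bound, and this also establishes that the maps $Te^{-sH}$, already continuous on $H^1$ by step (i), have operator norm uniformly bounded by $A$.

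The main obstacle is the mismatch in convergence modes: arbitrary atomic decompositions of $H^1$ functions only converge in $L^1$, so one cannot naively distribute $T$ (only known to be $L^2$-bounded) across the sum. The regularization by $e^{-sH}$ together with its $L^1\to L^\infty$ smoothing from Theorem \ref{thm_continuite} is what permits the interchange of $T$ and the infinite sum; the subsequent $s\to 0$ limit must then be controlled on a dense subspace where $T$ itself acts, which is why the bound is first proved for $Te^{-sH}$ and transferred to $T$ only on $L^2\cap H^1$ before density closes the argument.
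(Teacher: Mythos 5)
Your proposal is correct and follows essentially the same route as the paper: use the $L^1\to L^\infty$ smoothing of $Te^{-sH}$ (Theorem \ref{thm_continuite}) to justify swapping $Te^{-sH}$ with the atomic sum, apply the atom-by-atom bound of Theorem \ref{thm_atome_2} together with the BMO characterization of Remark \ref{rem:BMO}, then pass to the limit $s\to 0$ on $H^1\cap L^2$ and close by density and completeness of $\BMO$. The only cosmetic difference is in the $s\to 0$ step: you argue directly that $T(e^{-sH}f)\to Tf$ in $L^2$ using strong continuity of the semigroup and $L^2$-boundedness of $T$, whereas the paper invokes the commutation $Te^{-sH}=e^{-sH}T$ to write the difference as $e^{-sH}Tf-Tf$; both are equally valid.
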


\begin{proof}
Let $f\in H^1$ and consider an atomic decomposition. The atoms are uniformly bounded
in $L^1$ so the limit $$f=\sum_{i=0}^{+\infty}\lambda_i a_i=\lim
\limits_{N\to+\infty} \sum_{i=0}^N \lambda_i a_i$$ takes place in $L^1$. \\
Moreover $a_i \in L^1$ implies $Te^{-sH}(a_i) \in L^{\infty}$ due to Theorem
\ref{thm_continuite}. Hence the limit
$$ Te^{-sH}\left(\lim \limits_{N \to +\infty} \sum_{i=0}^{N} \lambda_i a_i \right)
=\lim \limits_{N \to + \infty} Te^{-sH}\left(\sum_{i=0}^{N} \lambda_i a_i \right)
=\lim \limits_{N \to + \infty} \sum_{i=0}^N \lambda_i T e^{-sH}(a_i)$$
is valid and takes place in $L^{\infty}$ for every $s>0$ fixed.
Thus 
$$Te^{-sH} \left( f\right) = \sum_{i=0}^{+\infty} \lambda_i T e^{-sH}(a_i).$$
Let $f\in H^1$. There exists a decomposition $f=\sum_i \lambda_i a_i$ with $a_i$
atoms, $\sum_i |\lambda_i|<+\infty$ and $\sum_i |\lambda_i| \leq 2 \|f\|_{H^1}$. We
want to estimate $$\|Te^{-sH}f\|_{\BMO}=\sup \limits_{b}|\langle Te^{-sH}f,b \rangle |$$
where the supremum is taken over all atoms $b$ (see Remark \ref{rem:BMO}). By
Theorem \ref{thm_atome_2}, and what we just prove, we have:
\begin{align*}
 | \langle Te^{-sH}\sum_i \lambda_i a_i,b \rangle |&\leq \sum_i|\lambda_i| | \langle Te^{-sH}a_i,b \rangle |\\
 &\lesssim \sum_i|\lambda_i| A\lesssim A\|f\|_{H^1}.
\end{align*}
Hence $$\|Te^{-sH}\|_{H^1\to \BMO} \lesssim A$$ and the implicit constant is uniform
in $s>0$.


Let us now consider the boundedness of the operator $T$.
We know (see \cite{BZ} e.g.) that $H^1 \cap L^2$ is dense in $H^1$ (since every
atoms are $L^2$ functions). Moreover $(e^{-sH})_{s\geq 0 }$ is a strongly continuous
semigroup on $\mathcal L (L^2)$ so: $$\forall f \in L^2 \virg  \lim \limits_{s \to
0} \|e^{-sH}f-f\|_{L^2} = 0.$$ \\
Let $f \in H^1 \cap L^2$ so that $T f \in L^2$ and let $a$ be an atom. We also have 
$$| \langle T e^{-sH}f - T f \virg a \rangle |\leq \|e^{-sH} Tf - Tf\|_{L^2} \|a\|_{L^2}
\underset{s\to 0}{\to}0.$$
Consequently, uniformly with respect to the atom $a$, we have
 $$| \langle Tf \virg a \rangle |= \lim \limits_{s \to 0} | \langle T e^{-sH}f \virg a \rangle | \lesssim A
\|f\|_{H^1}.$$
Then for all $f\in H^1\cap L^2$: $$\|Tf\|_{\BMO} \lesssim A \|f\|_{H^1}.$$

As $\BMO$ is a Banach space, $T$ admits an extension (still denoted $T$) which is
bounded from $H^1$ to $\BMO$ and then from $H^1$ to $(H^1)^*$ because $\BMO
\hookrightarrow (H^1)^*$.
\end{proof}

 \subsection{Interpolation}

Having obtained a bound on the Hardy space, we now aim to use interpolation to
conclude the proof of Theorem \ref{thm1}.

\begin{proof}[Proof of Theorem \ref{thm1}]

Consider a $L^2$-bounded operator $T$, which commutes with $H$ and satisfies
Property \eqref{disp} for some $m\in[ \frac{d}{2},\frac{4M}{3}]$.
Then Theorem \ref{thm_hardy_1} shows that $T$ admits a continuous extension from
$H^1$ to $(H^1)^*$. So we aim now to interpolate the two following continuities: 
$$\begin{cases}
 &\|T\|_{L^2 \to L^2} \lesssim 1 \\
 &\|T\|_{H^1 \to (H^1)^*} \lesssim A. \\
\end{cases}$$
Let $p$ be fixed in $(1,2)$. Then by choosing $\theta=\frac 2p-1 \in (0,1)$ and
$\frac 1q = 1- \frac 1p$, that is $q=p'$, in Theorem \ref{interpolation}, if
$\mu(X)=+\infty$, we have
$$T \colon (L^2, H^1)_{\theta} =L^p\to (L^2, (H^1)^*)_{\theta}\hookrightarrow L^q.$$
It follows the boundedness of $T$ from $L^p$ to $L^{p'}$. More precisely, if the
space $X$ is unbounded then
$$\|T\|_{L^p \to L^{p'}} \lesssim \|T\|_{H^1 \to (H^1)^*}^{\theta} \|T\|_{L^2 \to
L^2}^{1-\theta} \lesssim A^ \theta  = A^{\frac 1p - \frac 1{p'}}.$$
If the space $X$ is bounded, then Theorem \ref{interpolation_finie} shows
$$\|T\|_{L^p \to L^{p'}} \lesssim A^{\frac 1p - \frac 1{p'}}+B,$$
provided that $\|T\|_{L^p \to L^2} \lesssim B$.
\end{proof}

\section{Application to Strichartz estimates} \label{section_strichartz}

In this section we aim to take advantage of the dispersive estimates previously
obtained in the particular situation where $T$ is given by the Schr\"odinger
propagator, to deduce some Strichartz estimates with loss of derivatives, as
introduced in \cite{BGT}.

In particular, we are looking to dispersive estimates $L^p-L^{p'}$ with polynomial
bound. It is also natural to work in the setting of an Ahlfors regular space (and not
only in the doubling situation). The space $X$ of homogeneous type is said Ahlfors regular if there exist two
absolute positive constants $c$ and $C$ such that for all $x \in X$ and $r > 0$:
\begin{equation}\label{ah}
 c r^{d} \leq \mu(B(x, r)) \leq C r^{d}.
\end{equation}
From now on, we will assume this property.

To establish Strichartz estimates from dispersive inequalities we adapt the result
by Keel-Tao in \cite{KT}, namely: \\
Consider $(U(t))_{t \in {\mathbb R}}$ a collection of uniformly $L^2$-bounded
operators, i.e.
\begin{equation}\label{KT_1}
\sup_{t\in {\mathbb R}} \ \|U(t)\|_{L^2 \to L^2} \lesssim 1
\end{equation}
and such that for a certain $\sigma > 0$ 
\begin{equation}\label{KT_2}
\forall t\neq s \in \mathbb R, \|U(s)U(t)^*\|_{L^1 \to L^{\infty}} \lesssim |t-s|^{-\sigma}.
\end{equation}
Then in \cite{KT}, it is proved that for all admissible pair of exponents \eqref{admissible}, we have 
$$\|U(t)f\|_{L^p_t L^q_x} \lesssim \|f\|_{L^2}.$$

By the exact same proof, we have the following
\begin{thm}\label{thm_KT}
Suppose that the collection $(U(t))_t$ satisfies \eqref{KT_1} and for some $\sigma >0$
\begin{equation}\label{KT_3}
\forall t\neq s \in \mathbb R, \|U(s)U(t)^*\|_{H^1 \to (H^1)^*} \lesssim |t-s|^{-\sigma}.
\end{equation}
Then for all admissible pair \eqref{admissible} with $q\neq +\infty$, we have
 $$\|U(t)f\|_{L^p_t L^q_x} \lesssim \|f\|_{L^2},$$
where we assume in addition that 
\begin{equation}\label{KT_4}
\forall t\neq s \in \mathbb R, \|U(s)U(t)^*\|_{L^{q'} \to L^2} \lesssim |t-s|^{-\sigma\left( \frac{1}{q'} - \frac{1}{q} \right)}
\end{equation}
if $X$ is bounded.
\end{thm}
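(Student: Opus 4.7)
The plan is to follow the Keel--Tao $TT^*$ scheme \cite{KT} verbatim, replacing their $L^1$--$L^\infty$ dispersive input by the abstract $H^1$--$(H^1)^*$ bound~\eqref{KT_3}, and then using the interpolation results of Subsection~\ref{sectionhardybmo} to produce the $L^{q'}$--$L^q$ dispersive bounds that feed into Hardy--Littlewood--Sobolev.

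\textbf{Step 1 (upgrading the dispersion).} For each fixed pair $s\neq t$, combine $\|U(s)U(t)^*\|_{L^2\to L^2}\lesssim 1$ (a consequence of~\eqref{KT_1}) with the hypothesis~\eqref{KT_3}. In the unbounded case, Theorem~\ref{interpolation} yields, for every $q\in(2,\infty)$ and with $\theta=\tfrac{1}{q'}-\tfrac{1}{q}$, the bound
$$\|U(s)U(t)^*\|_{L^{q'}\to L^q}\lesssim |t-s|^{-\sigma\theta}.$$
When $X$ is bounded, Theorem~\ref{interpolation_finie} is used instead; this is precisely why the extra assumption~\eqref{KT_4} appears in that regime, since it supplies the $L^{q'}\to L^2$ ingredient that Theorem~\ref{interpolation_finie} demands in order to deliver the same dispersive rate.

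\textbf{Step 2 ($TT^*$ and HLS).} Setting $(Tf)(t):=U(t)f$, the Strichartz bound $\|Tf\|_{L^p_tL^q_x}\lesssim \|f\|_{L^2}$ is, by the standard $TT^*$ duality, equivalent to the bilinear estimate
$$\Bigl\|\int U(t)U(s)^*F(s)\,ds\Bigr\|_{L^p_tL^q_x}\lesssim \|F\|_{L^{p'}_tL^{q'}_x}.$$
Minkowski's inequality together with Step~1 reduces this to controlling the fractional integral $\int|t-s|^{-\sigma\theta}\|F(s)\|_{L^{q'}_x}\,ds$ in $L^p_t$. The admissibility relation~\eqref{admissible}, under the identification $\sigma=d/2$ dictated by \eqref{disp_L1_Linfty} in the applications, amounts precisely to $\sigma\theta=\tfrac{2}{p}$, which is exactly the scaling that allows Hardy--Littlewood--Sobolev to map $L^{p'}_t\to L^p_t$. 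Applying HLS closes the estimate. The excluded case $q=\infty$ is precisely the endpoint where HLS degenerates and the deeper Keel--Tao endpoint argument would be required; in the present framework it would moreover require an $L^\infty$ target, whereas our interpolation only reaches $\BMO$.

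\textbf{Main obstacle.} The $TT^*$ arithmetic is routine once Step~1 is in hand. The substantive point is Step~1 in the bounded-space setting: the interpolation identity $(L^2,H^1)_\theta=L^p$ from Theorem~\ref{interpolation} ceases to characterize $L^p$ fully when $\mu(X)<\infty$, which is exactly why the auxiliary assumption~\eqref{KT_4} has to be added to the statement. With that hypothesis in place, Theorem~\ref{interpolation_finie} furnishes the dispersive bound with the correct rate $|t-s|^{-\sigma\theta}$, and the remainder of the proof proceeds exactly as in~\cite{KT}.
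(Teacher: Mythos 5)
Your proposal is correct and takes essentially the same approach the paper intends: the paper does not spell out a proof, stating only that one replaces $L^1$ by $H^1$ in Keel--Tao and interpolates \eqref{KT_1} with \eqref{KT_3} via Theorem~\ref{interpolation} (and, implicitly, Theorem~\ref{interpolation_finie} when $\mu(X)<\infty$, which is exactly the role you correctly assign to \eqref{KT_4}). Your Step~1 reproduces the intended interpolation to get $\|U(s)U(t)^*\|_{L^{q'}\to L^q}\lesssim|t-s|^{-\sigma\theta}$, and Step~2 is the standard non-endpoint Keel--Tao $TT^*$/HLS argument, with the arithmetic $\sigma\theta=2/p$ checked correctly under admissibility. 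The only slip is in the parenthetical about why $q=\infty$ is excluded: HLS degenerates at $p=2$, which for admissible pairs corresponds to $q=\infty$ only when $d=2$ (already removed by $(p,q,d)\neq(2,\infty,2)$); for $d=1$ the pair $(p,q)=(4,\infty)$ would be fine for HLS, so the genuine obstruction is the one you give second, namely that Theorem~\ref{interpolation} only embeds $(L^2,(H^1)^*)_\theta$ into $L^q$ for $q<\infty$ and cannot reach $L^\infty$.
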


We do not give a proof of this result, since it is exactly the same as the one in
\cite{KT} by replacing the space $L^1$ with the Hardy space $H^1$. 
The proof relies on interpolating the two boundedness \eqref{KT_1} and \eqref{KT_3},
which is still possible with the Hardy space, due to Theorem \ref{interpolation}.

We are now in position to prove the following result:

\begin{thm}\label{thm_strichartz_2-bis} Assume \eqref{ah} with \eqref{due}. Consider an integer $\ell\geq 0$. Assume that the operator $T_t(H):=e^{itH} \psi_{2\ell} (h^2H)$ satisfies
Property $(H_m(|t|^{-\frac d2}))$ for some $m\geq \frac d2$ and every $t\in[-1,1]$.
Then for all pair of admissible exponents \eqref{admissible} with $q\neq +\infty$ we have: 
$$\left(\int_{-1}^1 \|e^{itH}\psi_{2\ell}(h^2H)f\|_{L^q}^pdt \right)^{\frac 1p}\lesssim \|\psi_{\ell,\frac{1}{2}}(h^2H)f\|_{L^2}.$$
\end{thm}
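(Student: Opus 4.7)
The plan is to apply the Keel--Tao type result of Theorem \ref{thm_KT} via a $TT^*$ argument. First, using Proposition \ref{prop_basique}(a), observe that $\psi_{\ell,1/2}(x)^2 = x^{2\ell}e^{-x} = \psi_{2\ell}(x)$, so setting $V(t) := e^{itH}\psi_{\ell,1/2}(h^2 H)$, its adjoint is $V(t)^* = \psi_{\ell,1/2}(h^2 H)e^{-itH}$ (by self-adjointness of $\psi_{\ell,1/2}(h^2H)$), and therefore
$$V(s)V(t)^* = e^{i(s-t)H}\psi_{\ell,1/2}(h^2H)^2 = e^{i(s-t)H}\psi_{2\ell}(h^2H) = T_{s-t}(H).$$
Writing $f := \psi_{\ell,1/2}(h^2 H)g$, one has $V(t)f = e^{itH}\psi_{2\ell}(h^2 H)g$, so the target inequality reduces to
$$\|V(t)f\|_{L^p_t L^q_x([-1,1]\times X)} \lesssim \|f\|_{L^2},$$
which it suffices to establish for every $f \in L^2$ (then specializing $f = \psi_{\ell,1/2}(h^2 H)g$ yields the stated inequality).

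Next, I would verify the hypotheses of Theorem \ref{thm_KT}. The bound \eqref{KT_1} holds uniformly in $t$ and $h$ by bounded functional calculus (Theorem \ref{thm_calcul_fonctionnel}). For \eqref{KT_3}, I observe that $T_{s-t}(H)$ is $L^2$-bounded and commutes with $H$, and by hypothesis satisfies Property $(H_m(|s-t|^{-d/2}))$ as soon as $|s-t|\leq 1$. Hence Theorem \ref{thm_hardy_1}, applied to $T_{s-t}(H)$, yields
$$\|V(s)V(t)^*\|_{H^1\to \BMO} = \|T_{s-t}(H)\|_{H^1\to \BMO} \lesssim |s-t|^{-d/2},$$
and the embedding $\BMO \hookrightarrow (H^1)^*$ from \eqref{H1*doncBMO} provides \eqref{KT_3} with $\sigma = d/2$. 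Since the Strichartz admissibility relation $2/p + d/q = d/2$ is precisely the Keel--Tao condition for this exponent $\sigma$, Theorem \ref{thm_KT} then delivers the desired control $\|V(t)f\|_{L^p_t L^q_x} \lesssim \|f\|_{L^2}$.

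The main subtleties lie in secondary technicalities rather than in the core $TT^*$ scheme. First, the hypothesis is stated only for $|t|\leq 1$ whereas the $TT^*$ convolution brings in differences $s-t\in[-2,2]$; for the outer range $|s-t|\in(1,2]$ the target bound $|s-t|^{-d/2}\asymp 1$ is harmless and can be absorbed by factorizing $T_{s-t} = e^{i(s-t\mp 1)H}T_{\pm 1}(H)$, using the hypothesis at $t=\pm 1$ together with the $L^2$-boundedness of $e^{i\tau H}$ and Theorem \ref{thm:H}. Second, in the bounded-volume setting, Theorem \ref{thm_KT} additionally demands \eqref{KT_4}, namely a bound $\|V(s)V(t)^*\|_{L^{q'}\to L^2}\lesssim|s-t|^{-2/p}$; this can be extracted from the bounded-space version of Theorem \ref{thm1} applied to $T_{s-t}(H)$ combined with the trivial $L^p\to L^2$ estimate coming from functional calculus, interpolated against the $L^2\to L^2$ bound. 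These verifications are routine compared with the main point: Property $(H_m)$ directly supplies, through the $H^1\to\BMO$ machinery of Section \ref{section_dispersion}, the only nontrivial dispersive input required by the Keel--Tao scheme.
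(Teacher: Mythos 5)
Your overall $TT^*$/Keel--Tao scheme is the same as the paper's, and the core computation $V(s)V(t)^*=T_{s-t}(H)$ together with the use of Theorem~\ref{thm_hardy_1} to supply \eqref{KT_3} is correct. The one genuine gap is in your treatment of the range $|s-t|\in(1,2]$. Your factorization $T_{s-t}(H)=e^{i(s-t\mp 1)H}\,T_{\pm 1}(H)$ leaves a \emph{bare} unitary factor $e^{i\tau H}$ on the left, and this factor is neither bounded on $H^1\to\BMO$ nor does it preserve the microlocalized off-diagonal estimates in Property \eqref{disp}: the entire point of the paper is that the Schr\"odinger group alone is not a nice operator on the Hardy/BMO scale and only becomes so after composition with a semigroup cutoff $\psi(h^2H)$. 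Theorem~\ref{thm:H} only relates different values of $m$ for a \emph{fixed} operator $T$; it gives no help in passing an $(H_m)$ bound through a left-composition with $e^{i\tau H}$, and the $L^2$-boundedness of $e^{i\tau H}$ is likewise insufficient. So this step, as written, does not close.

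The fix the paper has in mind (this is the ``minor modification'' of the proof of Theorem~\ref{thm_strichartz_2}) is to never let $|s-t|$ exceed the range on which the hypothesis applies: one replaces $V(t)$ by $U(t):=\mathbf 1_J(t)\,e^{itH}\psi_{\ell,\frac12}(h^2H)$ for a subinterval $J\subset[-1,1]$ of length $\le 1$, so that $U(t)U(s)^*=\mathbf 1_J(t)\mathbf 1_J(s)\,T_{t-s}(H)$ is automatically supported in $\{|t-s|\le 1\}$, where $(H_m(|t-s|^{-d/2}))$ is available. Applying Theorem~\ref{thm_KT} on each such $J$ and summing the $L^p_t L^q_x$-norms over the (boundedly many, here two) intervals covering $[-1,1]$ gives the stated estimate. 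This time-localization, rather than an operator factorization, is the step you are missing. Your remarks on \eqref{KT_1} and on the reduction to $f=\psi_{\ell,\frac12}(h^2H)g$ are fine.
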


We also have the ``semi-classical'' version, involving a loss of derivatives: 

\begin{thm}\label{thm_strichartz_2} Assume \eqref{ah} with \eqref{due}. Consider an integer $\ell\geq 0$. Assume that for some $h_0>0$ and
$\gamma\in[0,2)$ (or $\gamma \in [1,2)$ if $X$ is bounded) the operator $T_t(H):=e^{itH} \psi_{2\ell} (h^2H)$ satisfies
Property $(H_m(|t|^{-\frac d2}))$ for some $m\geq \frac d2$ and every $t$
satisfying 
$$ |t|\lesssim h^\gamma \qquad \textrm{and} \quad h\leq h_0.$$
Then for all pair of admissible exponents \eqref{admissible} with $q\neq +\infty$ we have 
$$\left(\int_{-1}^1 \|e^{itH}\psi_{2\ell}(h^2H)f\|_{L^q}^pdt \right)^{\frac 1p}\lesssim h^{-\frac{\gamma}{p}} \|\psi_{\ell,\frac{1}{2}}(h^2H)f\|_{L^2}.$$
\end{thm}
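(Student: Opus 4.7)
The plan is to decompose the time interval $[-1,1]$ into $N\sim h^{-\gamma}$ intervals of length $\sim h^\gamma$, apply a loss-free Keel--Tao type Strichartz estimate on each of them, and sum up --- the count $N\sim h^{-\gamma}$ producing precisely the loss $h^{-\gamma/p}$ in the final estimate.

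Setting $U(t) := e^{itH}\psi_{\ell,1/2}(h^2H)$, a family uniformly bounded on $L^2$ by Theorem \ref{thm_calcul_fonctionnel}, item (a) of Proposition \ref{prop_basique} gives $(\psi_{\ell,1/2})^2 = \psi_{2\ell,1}$, so the self-adjointness of $\psi_{\ell,1/2}(h^2H)$ yields
\[
U(s)U(t)^* \;=\; e^{i(s-t)H}\psi_{2\ell}(h^2H) \;=\; T_{s-t}(H).
\]
By assumption $T_{s-t}(H)$ satisfies $(H_m(|s-t|^{-d/2}))$ whenever $|s-t|\lesssim h^\gamma$ and $h\leq h_0$. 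Applying Theorem \ref{thm1} (only its $H^1\to BMO$ half, established in Theorem \ref{thm_hardy_1}, which does not require self-adjointness) delivers
\[
\|U(s)U(t)^*\|_{H^1\to BMO}\;\lesssim\;|s-t|^{-d/2}\qquad\text{for}\quad|s-t|\lesssim h^\gamma.
\]

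I next partition $[-1,1]$ into $N\sim h^{-\gamma}$ disjoint intervals $I_j$ of length $\sim h^\gamma$. For each $j$, every pair $s,t\in I_j$ obeys $|s-t|\leq h^\gamma$, so the dispersive bound above holds on all of $I_j\times I_j$. The $TT^*$/bilinear interpolation proof of Theorem \ref{thm_KT} is purely local in time: it only uses the bound on the kernel $U(s)U(t)^*$ at pairs $(s,t)$ lying in the time set over which one integrates. Applying this localized form of Theorem \ref{thm_KT} on each $I_j$ and setting $g := \psi_{\ell,1/2}(h^2H)f$, one obtains
\[
\Bigl(\int_{I_j}\|U(t)g\|_{L^q}^p\,dt\Bigr)^{1/p}\;\lesssim\;\|g\|_{L^2},
\]
with constant independent of $j$ and $h$. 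Summing the $p$th powers over $j$ and taking $p$th roots gives $\|U(t)g\|_{L^p_t([-1,1])L^q_x}\lesssim h^{-\gamma/p}\|g\|_{L^2}$. Since $U(t)g = e^{itH}\psi_{\ell,1/2}(h^2H)^2 f = e^{itH}\psi_{2\ell}(h^2H)f$, this is the announced estimate.

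In the bounded case, Theorem \ref{thm_KT} additionally demands \eqref{KT_4}, namely $\|U(s)U(t)^*\|_{L^{q'}\to L^2}\lesssim |s-t|^{-2/p}$ (the exponent $2/p = d(1/q'-1/2)$ coming from the admissibility relation $2/p+d/q=d/2$). Since $e^{i(s-t)H}$ is $L^2$-unitary, it suffices to bound $\|\psi_{2\ell}(h^2H)\|_{L^{q'}\to L^2}$: a $TT^*$ argument together with the Gaussian kernel bound \eqref{UEmn} and Ahlfors regularity \eqref{ah} yields $\|\psi_{2\ell}(h^2H)\|_{L^1\to L^2}\lesssim h^{-d/2}$, and Riesz--Thorin interpolation with the trivial $L^2\to L^2$ bound produces $\|\psi_{2\ell}(h^2H)\|_{L^{q'}\to L^2}\lesssim h^{-2/p}$. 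For this uniform bound to be absorbed into $|s-t|^{-2/p}$ for $|s-t|\lesssim h^\gamma$, we need $h^\gamma\lesssim h$, i.e.\ $\gamma\geq 1$, which is exactly the extra restriction imposed on $\gamma$ in the bounded setting. The main obstacle is thus verifying \eqref{KT_4} with the correct polynomial dependence in the bounded case; once this is in place, the summation over the $N\sim h^{-\gamma}$ subintervals, which generates the derivative loss $h^{-\gamma/p}$, is routine.
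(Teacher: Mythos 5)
Your proposal is correct and follows essentially the same route as the paper: decompose $[-1,1]$ into $\sim h^{-\gamma}$ intervals of length $\sim h^\gamma$, verify (KT\_1), (KT\_3) via Theorem \ref{thm_hardy_1}, and (KT\_4) in the bounded case (which is where $\gamma\geq 1$ enters), apply the $H^1$--$\mathrm{BMO}$ Keel--Tao variant on each short interval, and sum the $p$th powers to pick up the factor $N^{1/p}\sim h^{-\gamma/p}$. The only cosmetic difference is that the paper builds the time-localization directly into the operator by setting $U(t)=\mathbf 1_J\,e^{itH}\psi_{\ell,1/2}(h^2H)$, so that $U(t)U(s)^*$ vanishes for $|t-s|>h^\gamma$ and Theorem \ref{thm_KT} applies verbatim over $\mathbb R$, whereas you invoke a "localized form" of Keel--Tao on each $I_j$ without the indicator; the two formulations are equivalent, and the paper's version simply avoids having to justify the locality of the $TT^*$ argument.
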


\begin{rem} 
\begin{enumerate}
 \item Following the arguments of Proposition \ref{prop_hyp_2}, if $e^{itH}
\psi_{2\ell } (h^2H)$ satisfies Property $(H_m(|t|^{-\frac d2}))$ for some integer
$\ell \geq 0$ then $e^{itH} \psi_{2\ell'} (h^2H)$ also satisfies Property
$(H_m(|t|^{-\frac d2}))$ for every integer $\ell'\geq \ell$.
 \item The case $\gamma \geq 2$ is easy (as explained in the Introduction). When $X$ is bounded, one cannot expect $\gamma=0$ because of the example of a constant initial data (see Introduction).
\end{enumerate}
\end{rem}

\begin{proof}[Proof of Theorems \ref{thm_strichartz_2-bis} and \ref{thm_strichartz_2} ] 
We only detail the proof of Theorem \ref{thm_strichartz_2} which is slightly more technical, we let the minor modifications to the reader to prove Theorem \ref{thm_strichartz_2-bis}.

Fix an interval $J\subset [-1,1]$ of length $|J|=h^\gamma$ and consider
$$U(t)=\mathbf1_Je^{itH}\psi_{\ell,\frac{1}{2}}(h^2H)$$
We aim to apply Theorem \ref{thm_KT} with $\sigma = \frac d2$ and a suitable large
enough integer $M$ (defining the Hardy space). So fix this integer $M\geq \frac{3m}{4}$ large enough which allows
us to consider atoms and Hardy space, related to this parameter and we have
$m\in[\frac d2,\frac{4M}{3}]$ as required in  Theorem \ref{thm_hardy_1}.

Since $x\mapsto e^{itx}\psi_{\ell,\frac{1}{2}}(x) \in L^{\infty}(\mathbb R_+)$ is
uniformly bounded, with respect to $t$, then Theorem \ref{thm_calcul_fonctionnel}
yields that 
$$ \sup_{t>0} \
\|U(t)f\|_{L^2}=\|\mathbf1_Je^{itH}\psi_{\ell,\frac{1}{2}}(h^2H)f\|_{L^2} \lesssim
\|f\|_{L^2},$$
which is \eqref{KT_1}.

Then let us check \eqref{KT_3}. We have
\begin{align*}
U(t)U(s)^*
&=\mathbf1_J(t)\mathbf1_J(s)e^{itH}\psi_{\ell,\frac{1}{2}}(h^2H)(e^{isH}\psi_{\ell,\frac{1}{2}}(h^2H))^*
\\
                &=\mathbf1_J(t) \mathbf1_J(s) T_{t-s}(H),
\end{align*}
where we used that $H$ is self-adjoint and $|\psi_{\ell,\frac{1}{2}}|^2=\psi_{2\ell}$.
Since $J$ has a length equal to $h^\gamma$ then $U(t)U(s)^*$ is vanishing or
else $|t-s|\leq h^\gamma$. In this last case, $U(t)U(s)^*$ 
satisfies Property $(H_{m}(|t-s|^{-\frac d2}))$. Hence, by Theorem
\ref{thm_hardy_1}, we deduce that
$$\|U(t)U(s)^*f\|_{(H^1)^*}\lesssim \frac 1{|t-s|^{\frac d2}}\|f\|_{H^1},$$
which is \eqref{KT_3}.
Let us check \eqref{KT_4} in case $X$ is bounded: similarly since the Schr\"odinger propagators are unitary in $L^2$, we have 
$$\|U(t)U(s)^*\|_{L^{q'} \to L^2} \leq \|\psi_{2\ell}(h^2H)\|_{L^{q'} \to L^2}$$
with $|t-s|\leq h^{\gamma}\lesssim 1$.
Recall that 
for all $q'\in [1,2)$: 
$$ \|\psi_{4\ell,2}(h^2H)f\|_{L^{q}} \lesssim h^{-d\left( \frac{1}{q'} - \frac{1}{q} \right)}\|f\|_{L^{q'}}.$$
By a $TT^*$ argument we have: $$\|\psi_{4\ell,2}(h^2H)\|_{L^{q'} \to L^{q}}=\|\psi_{2\ell}(h^2H)\|_{L^{q'} \to L^2}^2.$$
Hence $$\|U(t)U(s)^*\|_{L^{q'} \to L^2} \lesssim h^{-\frac d2 \left( \frac{1}{q'} - \frac{1}{q} \right)}\leq |t-s|^{-\frac{d}{2\gamma}\left( \frac{1}{q'} - \frac{1}{q} \right)} \lesssim |t-s|^{-\frac{d}{2}\left( \frac{1}{q'} - \frac{1}{q} \right)}$$
as soon as $\gamma \geq 1$.

Thus we can apply Theorem \ref{thm_KT}. For all admissible pair \eqref{admissible} with $q\neq+ \infty$, then 
$$\left(\int_{\mathbb R} \|U(t)g\|_{L^q}^pdt\right)^{\frac 1p} \lesssim \|g\|_{L^2}.$$
That is 
$$\left(\int_{J} \|e^{itH}\psi_{\ell,\frac{1}{2}}(h^2H)g\|_{L^q}^pdt\right)^{\frac
1p} \lesssim \|g\|_{L^2}.$$
Take $g=\psi_{\ell,\frac{1}{2}}(h^2H) f$ then
$\psi_{\ell,\frac{1}{2}}(h^2H)g=\psi_{2\ell}(h^2H) f$ and so
\begin{equation}\label{strichartz_J}
\left(\int_{J} \|e^{itH}\psi_{2\ell}(h^2H)f\|_{L^q}^pdt\right)^{\frac 1p} \lesssim
\| \psi_{\ell,\frac{1}{2}}(h^2H) f \|_{L^2}.
\end{equation}

We write $[-1,1]=\displaystyle \bigcup_{k=1}^N J_k,$ where $J_k$ are disjoint
intervals with a length smaller than $h^{\gamma}$, so the number of intervals
satisfies $N \lesssim h^{-\gamma}$. 

Hence, by \eqref{strichartz_J}
 $$\int_{-1}^1 \|e^{itH}\psi_{2\ell}(h^2H)f\|_{L^q}^pdt \lesssim \displaystyle
\sum_{k=1}^N \int_{J_k} \|e^{itH}\psi_{2\ell}(h^2H)f\|_{L^q}^pdt \lesssim N
\|\psi_{\ell,\frac{1}{2}}(h^2H) f\|_{L^2}^p,$$
and so
 $$\left(\int_{-1}^1 \|e^{itH}\psi_{2\ell}(h^2H)f\|_{L^q}^pdt \right)^{\frac 1p}
\lesssim h^{-\frac {\gamma}{p}} \|\psi_{\ell,\frac{1}{2}}(h^2H) f\|_{L^2}.$$
\end{proof}

We can now prove the main result of this section: How Property $(H_m(t^{-\frac d2}))$ implies
Strichartz estimates with loss of $\frac{\gamma}{p}$ derivatives: 
\begin{thm}\label{thm_strichartz}
Assume \eqref{ah} with \eqref{due}. Consider an integer $\ell\geq 0$.
Assume that for some $h_0>0$ and $\gamma\in[0,2)$ the operator $T_t(H):=e^{itH}
\psi_{2\ell} (h^2H)$ satisfies Property $(H_m(|t|^{-\frac d2}))$ for some $m\geq \frac d2$ and every $t$ satisfying 
$$ |t|\lesssim h^\gamma \qquad \textrm{and} \quad h\leq h_0.$$
Then for all pair of admissible exponents \eqref{admissible} with $q\neq +\infty$, every solution $u=e^{itH}u_0$ of the
problem
 $$\begin{cases}
  &i\partial_tu+Hu=0 \\
  &u_{|t=0}=u_0\\
 \end{cases}$$ satisfies
 $$\|u\|_{L^p([-1,1], L^q)}\lesssim \|u_0\|_{W^{\frac{\gamma}{p}, 2}}.$$
\end{thm}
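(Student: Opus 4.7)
\emph{Plan.} I will derive Theorem \ref{thm_strichartz} by combining the frequency-localized Strichartz estimate of Theorem \ref{thm_strichartz_2} with the Littlewood--Paley characterization of $W^{\gamma/p,2}$ stated right after Theorem \ref{thm_LP}. Since the pair $(p,q)$ is admissible with $q\ne\infty$, we have $p,q\ge 2$, which is exactly the regime in which both Theorem \ref{thm_LP} and Minkowski's integral inequality are at our disposal.

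First, for each fixed $t\in[-1,1]$ I apply Theorem \ref{thm_LP} to $e^{itH}u_0$ (on the $L^q$ side, which requires $q\ge 2$) and commute $e^{itH}$ through $\psi_{2\ell}(u^2H)$ to obtain
\begin{equation*}
\|e^{itH}u_0\|_{L^q}\lesssim \|\varphi(H)e^{itH}u_0\|_{L^q}+\left(\int_0^1\|e^{itH}\psi_{2\ell}(u^2H)u_0\|_{L^q}^2\,\frac{du}{u}\right)^{1/2}.
\end{equation*}
Taking the $L^p([-1,1])$ norm in $t$ and invoking Minkowski's integral inequality (valid since $p\ge 2$) to push $L^p_t$ inside the square integral yields
\begin{equation*}
\|u\|_{L^p_tL^q_x}\lesssim \|\varphi(H)e^{itH}u_0\|_{L^p_tL^q}+\left(\int_0^1\|e^{itH}\psi_{2\ell}(u^2H)u_0\|_{L^p_tL^q}^2\,\frac{du}{u}\right)^{1/2}.
\end{equation*}

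For the high-frequency range $u\le h_0$ I apply Theorem \ref{thm_strichartz_2} with scale $h=u$, which is permitted by the hypothesis of Theorem \ref{thm_strichartz}, giving
\begin{equation*}
\|e^{itH}\psi_{2\ell}(u^2H)u_0\|_{L^p_tL^q}\lesssim u^{-\gamma/p}\,\|\psi_{\ell,1/2}(u^2H)u_0\|_{L^2}.
\end{equation*}
For the complementary range $u\in[h_0,1]$ and for the $\varphi(H)$ term, the unitarity of $e^{itH}$ on $L^2$ together with the Gaussian bound \eqref{UEmn} at a scale bounded away from zero shows that these operators map $L^2\to L^q$ with uniform norm, so their contributions are controlled by $\|u_0\|_{L^2}\le\|u_0\|_{W^{\gamma/p,2}}$. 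The remaining high-frequency piece is recognized via the $p=2$ case of the Sobolev norm characterization stated after Theorem \ref{thm_LP}:
\begin{equation*}
\left(\int_0^{h_0}u^{-2\gamma/p}\,\|\psi_{\ell,1/2}(u^2H)u_0\|_{L^2}^2\,\frac{du}{u}\right)^{1/2}\lesssim \|u_0\|_{W^{\gamma/p,2}},
\end{equation*}
possibly after a minor adjustment of the $\psi$-cutoff via Proposition \ref{prop_basique}(b) should $\ell<2$. Combining the two contributions gives the stated Strichartz estimate with loss $\gamma/p$.

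The main technical hurdle is the Minkowski exchange of $L^p_t$ and $L^2(du/u)$ in the second step, which is licit precisely because admissibility with $q\ne\infty$ forces $p\ge 2$; the same condition is what makes the $L^q$ version of Theorem \ref{thm_LP} available. A secondary bookkeeping issue is matching the various $\psi_{m,n}$-cutoffs appearing in Theorem \ref{thm_LP}, Theorem \ref{thm_strichartz_2}, and the Sobolev characterization, which is handled by the algebraic identities of Proposition \ref{prop_basique} and the almost-orthogonality recalled in item (d) of that proposition.
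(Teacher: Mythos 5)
Your proposal follows the same route as the paper's proof: apply Theorem \ref{thm_LP} for the Littlewood--Paley-type decomposition in $L^q$, use Minkowski's integral inequality (legitimate since admissibility with $q\ne\infty$ forces $p\ge 2$), invoke the localized Strichartz estimate of Theorem \ref{thm_strichartz_2} at each scale, and close with the square-function characterization of $W^{\gamma/p,2}$ via the identity $s^{-\gamma/p}\psi_{\ell,1/2}(s^2H)=\psi_{\ell-\gamma/(2p),1/2}(s^2H)H^{\gamma/(2p)}$. The only cosmetic difference is that the paper absorbs the range $s\in[h_0,1]$ into a redefined $\varphi$, whereas you split the integral and treat the low-frequency tail separately; this is immaterial. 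Your aside about needing $\ell\ge 1$ (so that $\ell>\gamma/p$) is in fact well spotted: the theorem statement allows $\ell\ge 0$, but the paper's proof itself uses $\ell\ge 1$ at exactly the step you flag, so your "minor adjustment of the cutoff" is the same bookkeeping the paper implicitly relies on.
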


\begin{rem} We can consider more regular initial data, in the sense that if for some
$\delta>0$
$$\frac 2p + \frac dq = \frac d2-\delta,$$
then we have
$$\|u\|_{L^p([-1,1], L^q)}\lesssim \|u_0\|_{W^{\delta + \frac{\gamma}{p}, 2}}.$$
\end{rem}

\begin{proof}
Apply Theorem \ref{thm_LP} to $u(t)=e^{itH}u_0$
$$\|u(t)\|_{L^q}\lesssim \| \varphi(H)u(t)\|_{L^q}+\left\|\left(\int_0^{h_0}
|\psi_{2\ell}(s^2H)u(t)|^2\frac{ds}s\right)^{\frac 12}\right\|_{L^q}.$$
The function $\varphi$ is also depending of the parameters $h_0,\ell$. We omit this
dependence. Take the $L^p([-1,1])$ norm in time of that expression. Minkowski
inequality leads to
$$\|u\|_{L^p([-1,1],L^q)}\lesssim \underbrace{\|
\varphi(H)u\|_{L^p([-1,1],L^q)}}_{=I}+\underbrace{\left \|\left(
\int_0^{h_0}\|\psi_{2\ell}(s^2H)u\|_{L^q}^2\frac{ds}s\right)^{\frac12}\right\|_{L^p}}_{=II}.$$
Then \eqref{UE} with \eqref{ah} yields that $\varphi(H)$ has a kernel satisfying Gaussian pointwise estimate \eqref{UEmn} at the scale $1$ (or more precisely $h_0$ but we forget this
dependence) so is in particular bounded from $L^2$ to $L^q$ (since $q\geq 2$)
and so 
$$I\lesssim \|e^{itH}u_0\|_{L^p([-1,1], L^2)} \lesssim \|u_0\|_{L^2} \lesssim
\|u_0\|_{W^{\frac{\gamma}{p},2}},$$
because the Schr\"odinger group is an isometry on $L^2$.

Since $p\geq 2$, generalized Minkowski inequality and Theorem \ref{thm_strichartz_2}
yield
\begin{align*}
II&\leq \left(\int_0^{h_0} \|\psi_{2\ell}(s^2H)u\|_{L^p([-1,1],L^q)}^2\,
\frac{ds}s\right)^{\frac 12} \\
   &\lesssim \left(\int_0^{h_0} s^{-\frac{2\gamma}{p}}
\|\psi_{\ell,\frac{1}{2}}(s^2H)u_0\|_{L^2}^2 \, \frac{ds}s\right)^{\frac 12} \\
   &\lesssim \left\| \left( \int_0^{h_0} s^{-\frac{2\gamma}{p}}
|\psi_{\ell,\frac{1}{2}}(s^2H)u_0|^2 \, \frac{ds}s\right)^{\frac
12}\right\|_{L^2} \lesssim \|u_0\|_{W^{\frac{\gamma}{p},2}},
\end{align*}
where we used $\ell>\frac{\gamma}{p}$ (since $\ell\geq 1$, $\gamma\in[0,2)$ and
$p\geq 2$) and the fact that 
$$s^{-\frac{\gamma}{p}}\psi_{\ell,\frac{1}{2}}(s^2H) =
\psi_{\ell-\frac{\gamma}{2p},\frac{1}{2}}(s^2H) H^{\frac{\gamma}{2p}}$$
with Theorem \ref{thm_LP}.
Finally, we get
$$\|u\|_{L^p([-1,1],L^q)}\lesssim \|u_0\|_{W^{\frac{\gamma}{p},2}}.$$
\end{proof}

\section{Dispersive estimates for Schr\"odinger operator through wave operator}\label{section_wave_propagation}

\subsection{Dispersive estimates from Wave to Schr\"odinger propagators}

We recall that we want to obtain $$\|T_t(H)\|_{H^1 \to \BMO} \lesssim |t|^{-\frac
d2}$$ where $T_t(H)=e^{itH} \psi_{2\ell}(h^2H)$ for $t$ belonging to an interval, as large as possible.
In regard of the previous section, it suffices to check that
$e^{itH}\psi_{2\ell}(h^2H)$  satisfies Property $(H_m(|t|^{-\frac d2}))$ (for some
parameters $\ell,m,\gamma,h_0$), which may be written with \eqref{ah} as: for
every balls $B_r,\widetilde{B_r}$ 
\begin{equation}\label{hm(t-d2)}
\|e^{itH}\psi_{2\ell}(h^2H) \psi_m(r^2H)\|_{L^2(B_r)\to L^2(\widetilde{B_r})} \lesssim
\left(\frac{r^2}{|t|}\right)^{\frac d2}.
\end{equation}

We aim to use the Hadamard formula, which describes how the Schr\"odinger propagator
may be built using the wave propagator. Let us quickly recall it: the Cauchy
formula gives that for any $a \in \mathbb C$ with $\Re(a) >0$
 $$a^{-\frac 12} e^{-\frac{\xi^2}{2a}}=(2\pi)^{-\frac{1}{2}} \int_{\mathbb R}
e^{-ix\xi}e^{-\frac{ax^2}{2}}dx.$$
Using imparity and noting $z=\frac{1}{2a}$, we get
 $$e^{-z\xi^2}=\frac{1}{\sqrt{\pi}}\int_0^{+\infty}\cos(s\xi)e^{-\frac{s^2}{4z}}\frac{ds}{\sqrt
z}.$$
Since $H$ is a self-adjoint nonnegative operator admitting a $L^\infty$-functional
calculus, one deduces the Hadamard transmutation formula:
\begin{equation}\label{astuce}
 e^{-zH}=\frac{1}{\sqrt{\pi}}\int_0^{+\infty}\cos(s\sqrt H)e^{-\frac{s^2}{4z}} \,
\frac{ds}{\sqrt z}.
\end{equation}


We now give a suitable condition on the wave propagators, under which
\eqref{hm(t-d2)} can be proved through \eqref{astuce}. The next section aims to
check that this assumption is satisfied in well-known situations as Euclidean space
or smooth Riemannian manifolds.

\begin{hyp}\label{hyp_cossH} There exists $\kappa\in(0,\infty]$ and an integer $m_0$
such that for every $s\in(0,\kappa)$  we have: for every $r>0$, every balls
$B_r,\widetilde{B_r}$ of radius $r$ then
$$\|\cos(s\sqrt H) \psi_{m_0}(r^2H)\|_{L^2(B_r) \to L^2(\widetilde{B_r})} \lesssim
\left(\frac{r}{s+r}\right)^{\frac{d-1}{2}} \left( 1+\frac{|L-s|}{r}
\right)^{-\frac{d+1}{2}}$$
 where $L=d(B_r,\widetilde{B_r})$.
 \end{hyp}

\begin{rem}
 Using the same arguments as in Proposition \ref{prop_hyp_2}, one can show that if
Assumption \ref{hyp_cossH} is true for an integer $m_0$ then it also holds for
every integer $m\geq m_0$.
\end{rem}

The main result of this section is the following:  

\begin{thm}\label{demo_hyp_Hmn2-bis} Suppose \eqref{ah} with $d>1$, \eqref{due} and
Assumption \ref{hyp_cossH} with $\kappa=\infty$. Then for every integer $m\geq \max(\frac d2, m_0+\left\lceil\frac{d-1}{2}\right\rceil)$ (where the
integer $m_0$ is the one given by Assumption \ref{hyp_cossH}) we have for every
$t\in {\mathbb R}$
 \begin{equation}\label{hyp_1-bis} \|e^{itH} \psi_{m}(r^2H)\|_{L^2(B_r) \to
L^2(\widetilde{B_r})} \lesssim \left( \frac{r^2}{|t|} \right)^{\frac d2},
 \end{equation}
 where the implicit constant only depends on integers $m,m'$.
Consequently, $e^{itH} $ satisfies Property $(H_m(|t|^{-\frac d2}))$
for every $t\in {\mathbb R}$.
\end{thm}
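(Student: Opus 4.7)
The idea is to use the Hadamard transmutation formula \eqref{astuce} to express the Schr\"odinger propagator (regularized by $\psi_m(r^2 H)$) as a superposition of wave propagators $\cos(s\sqrt H)$, and then bound each piece using Assumption \ref{hyp_cossH}, which now holds for all $s>0$ since $\kappa=\infty$.

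First I would dispose of the trivial range $|t|\leq r^2$: there the target bound $\lesssim (r^2/|t|)^{d/2}$ is at least $1$ and it follows from the uniform $L^2$-boundedness $\|e^{itH}\psi_m(r^2H)\|_{L^2\to L^2}\lesssim 1$ (Theorem \ref{thm_calcul_fonctionnel}) combined with the Ahlfors identity $\mu(B_r)^{1/2}\mu(\widetilde{B_r})^{1/2}\simeq r^d$ from \eqref{ah}. The real work is the range $|t|\geq r^2$; put $z:=r^2/2-it$, so $\Re(z)=r^2/2$ and $|z|\simeq |t|$. Using the algebraic decomposition $\psi_m(r^2H)=2^m\psi_{m-m_0}(r^2H/2)\,\psi_{m_0}(r^2H/2)$ and commuting with $e^{itH}$, I would rewrite
\begin{equation*}
e^{itH}\psi_m(r^2H) = c\cdot \psi_{m_0}(r^2H/2)\cdot H^{k}e^{-zH},
\end{equation*}
where $k:=m-m_0$ and $c$ absorbs constants and powers of $r$. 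Differentiating $k$ times in $z$ under the integral sign in \eqref{astuce} gives
\begin{equation*}
H^{k}e^{-zH}=\frac{(-1)^{k}}{\sqrt\pi}\int_0^\infty \cos(s\sqrt H)\,\partial_z^{k}\!\left[\frac{e^{-s^2/(4z)}}{\sqrt z}\right]\!ds,
\end{equation*}
and an elementary computation yields $\bigl|\partial_z^{k}[z^{-1/2}e^{-s^2/(4z)}]\bigr|\lesssim |z|^{-\frac12-k}(1+s^2/|z|)^{k}e^{-cs^2\Re(z)/|z|^2}$.

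Inserting this, commuting $\psi_{m_0}(r^2H/2)$ past $\cos(s\sqrt H)$, taking the $L^2(\widetilde{B_r})$ norm inside the integral, and invoking Assumption \ref{hyp_cossH} (with the harmless rescaling $r\leftrightarrow r/\sqrt 2$ handled by a covering argument as in Proposition \ref{prop_hyp_1}) reduces the operator estimate to the scalar inequality
\begin{equation*}
(r^2)^{k}\!\int_0^{\infty}\!\left(\frac{r}{s+r}\right)^{\!\!\frac{d-1}{2}}\!\!\left(1+\frac{|L-s|}{r}\right)^{\!-\frac{d+1}{2}}\!\!|z|^{-\frac12-k}\!\left(1+\frac{s^2}{|z|}\right)^{\!k}\!e^{-cs^2r^2/|t|^2}\,ds\lesssim \frac{r^d}{|t|^{d/2}},
\end{equation*}
with $L:=d(B_r,\widetilde{B_r})$.

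The main obstacle is the bookkeeping of this scalar integral, which I would handle by splitting at $s=r$. In the regime $s\lesssim r$ both the wave factor $(r/(s+r))^{(d-1)/2}$ and the bracket $(1+s^2/|z|)^{k}$ are $\simeq 1$, and integrating $(1+|L-s|/r)^{-(d+1)/2}$ (which uses $d>1$ for integrability) yields an $r$-gain, producing
\begin{equation*}
(r^2)^{k}|t|^{-1/2-k}\cdot r\cdot(1+L/r)^{-(d+1)/2}=(r^2/|t|)^{k+1/2}\cdot r\cdot(1+L/r)^{-(d+1)/2},
\end{equation*}
which is bounded by $r^d/|t|^{d/2}$ precisely when $k\geq (d-1)/2$; this is the source of the hypothesis $m\geq m_0+\lceil (d-1)/2\rceil$. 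For $s\gtrsim r$, the Gaussian truncates at the scale $|t|/r$, and the decay $(r/s)^{(d-1)/2}$ coupled with the integrability of $(1+|L-s|/r)^{-(d+1)/2}$ and the explicit form of $(1+s^2/|z|)^{k}$ gives the same bound after a careful case-analysis on whether $\sigma:=s/r$ lies below or above the Gaussian transition at $\sqrt{|t|}/r$. The remaining hypothesis $m\geq d/2$ is used only to invoke Theorem \ref{thm1} downstream to pass from Property $(H_m(|t|^{-d/2}))$ to the genuine $H^1$--$\BMO$ and $L^p$--$L^{p'}$ dispersive estimates, and is implied by $m\geq m_0+\lceil(d-1)/2\rceil$ in most cases.
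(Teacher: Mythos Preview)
Your approach is correct and constitutes a genuine alternative to the paper's argument. The paper works with $z=h^2-it$ (coming from the companion Theorem \ref{demo_hyp_Hmn2}), introduces a smooth cutoff $\chi$ at the scale $s\simeq |t|/r$, and in the small-$s$ regime performs $2n$ integrations by parts in the variable $s$, using $\partial_s^2\cos(s\sqrt H)=-H\cos(s\sqrt H)$ to convert $\psi_m(r^2H)$ into $r^{2n}\psi_{m-n}(r^2H)$ with $n=\lceil(d-1)/2\rceil$. You instead factor $\psi_m(r^2H)=2^m\psi_{m_0}(r^2H/2)\psi_{m-m_0}(r^2H/2)$ so that the Schr\"odinger piece is $e^{-zH}$ with $\Re(z)=r^2/2$; this choice makes the Gaussian $e^{-cs^2r^2/|t|^2}$ play the role of the cutoff automatically, and the extra $H^k$ is produced by $k$ differentiations in $z$ rather than integrations by parts in $s$. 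Both mechanisms generate the same polynomial weights in $s$ and the same scalar estimates, and the condition $m-m_0\geq\lceil(d-1)/2\rceil$ arises identically. Your route is arguably cleaner for the $\kappa=\infty$ statement (no auxiliary cutoff, no boundary terms), but note that it does not transfer directly to the $\kappa<\infty$ situation of Theorem \ref{demo_hyp_Hmn2}, where the independent parameter $h$ may be much smaller than $r$ and one really needs the explicit cutoff at $|t|/r$ together with the tail estimate on $[\kappa,\infty)$.

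Two small points. First, in your displayed identity for the $s\lesssim r$ contribution there is a stray factor of $r$: one has $(r^2)^k|t|^{-1/2-k}\cdot r=(r^2/|t|)^{k+1/2}$, not $(r^2/|t|)^{k+1/2}\cdot r$; the conclusion $k\geq(d-1)/2$ is unaffected. Second, your treatment of the regime $s\gtrsim r$ is only sketched; the cleanest way to close it is to bound the worst term $(s^2/|t|)^k$ via $\int(1+|L-s|/r)^{-(d+1)/2}\,ds\lesssim r$ together with $\sup_{s\geq r}\big[(r/s)^{(d-1)/2}(s^2/|t|)^k e^{-cs^2r^2/|t|^2}\big]\lesssim (r^2/|t|)^{(d-1)/2}(|t|/r^2)^{k}\,|t|^{-k}\cdot(|t|/r)^{2k-(d-1)/2}$-type estimates, which after simplification yield exactly $r^d/|t|^{d/2}$; this matches the paper's computation at the end of Step~4 with $n=k$.
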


\begin{thm}\label{demo_hyp_Hmn2}
 Suppose \eqref{ah} with $d>1$, \eqref{due} and Assumption \ref{hyp_cossH} with
$\kappa\in(0,\infty)$. Then for every $\varepsilon>0$, every $h>0$ with
$|t|<h^{1+\varepsilon}$, and for every integers $m'\geq 0$ and $m\geq \max(\frac d2,
m_0+\left\lceil\frac{d-1}{2}\right\rceil)$ (where the integer $m_0$ is the one
given by Assumption \ref{hyp_cossH}) we have
 \begin{equation}\label{hyp_1} 
  \|e^{itH} \psi_{m'}(h^2H)\psi_{m}(r^2H)\|_{L^2(B_r) \to L^2(\widetilde{B_r})} \lesssim \left( \frac{r^2}{|t|} \right)^{\frac d2},
 \end{equation}
where the implicit constant only depends on $\varepsilon>0$ and integers $m$, $m'$.
Consequently, $e^{itH} \psi_{m'}(h^2H)$ satisfies Property $(H_m(|t|^{-\frac d2}))$
for every $|t|<h^{1+\varepsilon}$ and every $\varepsilon>0$.
\end{thm}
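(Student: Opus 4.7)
The strategy is to use the Hadamard transmutation formula \eqref{astuce} at complex time $z := h^2 - it$, whose positive real part $h^2$ is provided by the regularization factor $\psi_{m'}(h^2H)$, to express the Schr\"odinger propagator as a superposition of wave propagators controlled by Assumption \ref{hyp_cossH}. The constraint $|t| < h^{1+\varepsilon}$ with $\varepsilon < 1$ (the range $\varepsilon \geq 1$ falls under the trivial regime $|t|\lesssim h^2$ discussed in the Introduction) ensures $|z| \lesssim h^{1+\varepsilon}$ and $\Re(1/(4z)) = h^2/(4|z|^2) \gtrsim h^{-2\varepsilon}$, giving strong Gaussian damping in the Hadamard integral.

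Writing $\psi_{m'}(h^2H) = (h^2H)^{m'}e^{-h^2H}$ yields $e^{itH}\psi_{m'}(h^2H) = (h^2H)^{m'}e^{-zH}$; analytic continuation of \eqref{astuce} to $\Re z > 0$ together with commutativity of the functional calculus and the identity $(h^2H)^{m'}\psi_m(r^2H) = (h/r)^{2m'}\psi_{m+m'}(r^2H)$ give
\[
e^{itH}\psi_{m'}(h^2H)\psi_m(r^2H) = \frac{(h/r)^{2m'}}{\sqrt{\pi z}}\int_0^\infty \cos(s\sqrt H)\,\psi_{m+m'}(r^2H)\,e^{-s^2/(4z)}\,ds.
\]

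Split the $s$-integral at $\kappa$. For the tail $s \geq \kappa$, the trivial $L^2$-bound $\|\cos(s\sqrt H)\psi_{m+m'}(r^2H)\|_{L^2 \to L^2}\lesssim 1$ combined with $|e^{-s^2/(4z)}|\leq e^{-cs^2 h^{-2\varepsilon}}$ gives a super-polynomially small contribution in $h$, negligible compared to any polynomial bound. For $s \in (0,\kappa)$, Assumption \ref{hyp_cossH} (applied with $m+m'\geq m_0$ in place of $m_0$, valid by the remark following the assumption) yields
\[
\|\cos(s\sqrt H)\psi_{m+m'}(r^2H)\|_{L^2(B_r)\to L^2(\widetilde{B_r})}\lesssim \Bigl(\tfrac{r}{s+r}\Bigr)^{(d-1)/2}\Bigl(1+\tfrac{|L-s|}{r}\Bigr)^{-(d+1)/2},
\]
with $L = d(B_r,\widetilde{B_r})$. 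Integrating this bound against $|e^{-s^2/(4z)}|\,ds$ and multiplying by the overall prefactor $(h/r)^{2m'}|z|^{-1/2}$, a case analysis on the relative sizes of $r$, $L$, and $|z|$ then gives the target $\lesssim (r^2/|t|)^{d/2}$; one may restrict to the nontrivial regime $r\leq |t|^{1/2}$, since $r > |t|^{1/2}$ forces $(r^2/|t|)^{d/2}\geq 1$ and the trivial $L^2$-bound alone suffices.

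The main obstacle is the concluding integration: the two decay regimes of the wave-propagator bound separated at $s\simeq L$, the Gaussian weight localized to $s\lesssim h^\varepsilon$, and the prefactor $(h/r)^{2m'}$ combine in a delicate way. Partitioning $(0,\kappa)$ at $s\simeq r$ and $s\simeq L$ and bounding each piece carefully yields the result; the hypothesis $m \geq m_0 + \lceil(d-1)/2\rceil$ guarantees integrability of the $(r/(s+r))^{(d-1)/2}$ factor (after replacing $m_0$ by $m+m'$) and absorbs the prefactor, while $m \geq d/2$ makes the dispersion rate $|t|^{-d/2}$ compatible with Property $(H_m)$ in the sense of Theorem \ref{thm1}.
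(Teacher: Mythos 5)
Your overall setup is correct (Hadamard transmutation at $z=h^2-it$, splitting off the tail $s\geq\kappa$ which is super-polynomially small, reduction of $m'$), but the central step is missing. You propose to bound the regime $s\in(0,\kappa)$ by simply inserting Assumption \ref{hyp_cossH} into the Hadamard integral and ``integrating against $|e^{-s^2/4z}|\,ds$'', with a case analysis in $r,L,|z|$. This cannot close: the Gaussian factor is essentially $1$ for $s\lesssim |t|/h$, so when, say, $L\lesssim r$ the integral
\[
\int_0^{|t|/r}\Bigl(\tfrac{r}{s+r}\Bigr)^{\frac{d-1}{2}}\Bigl(1+\tfrac{|L-s|}{r}\Bigr)^{-\frac{d+1}{2}}\frac{ds}{\sqrt{|t|}}
\]
evaluates to roughly $r/\sqrt{|t|}$, whereas the target is $(r^2/|t|)^{d/2}=(r/\sqrt{|t|})^d$. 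Since in the relevant regime $r\leq\sqrt{|t|}$ and $d>1$, the target is strictly \emph{smaller} than what the naive bound gives, so the estimate is off by a factor of $(r/\sqrt{|t|})^{d-1}$ and the argument fails.

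The paper repairs exactly this by a further decomposition of the small-$s$ range with a smooth cut-off $\chi$ equal to $1$ on $[0,|t|/r]$ and vanishing past $2|t|/r$, followed by $2n$ integrations by parts in $s$ with $n=\lceil(d-1)/2\rceil$. Each pair of $s$-derivatives falling on $\cos(s\sqrt H)$ produces a factor $H$, which is absorbed by converting $\psi_m(r^2H)$ into $r^{2n}\psi_{m-n}(r^2H)$; the derivatives falling on $\chi(s)e^{-s^2/4z}$ produce factors of order $(r/|t|)^{2n}$, $(s/|t|)^{2n}$, $|t|^{-n}$. The net gain $r^{2n}(r/|t|)^{2n}$ etc. is precisely the extra power $(r^2/|t|)^{2n}$ needed to close the estimate. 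This also explains the hypothesis $m\geq m_0+\lceil\tfrac{d-1}{2}\rceil$: after $n$ ``integrations by parts'' one is left with $\psi_{m-n}$, and one needs $m-n\geq m_0$ to still invoke Assumption \ref{hyp_cossH}. Your reading of that hypothesis as ensuring ``integrability of the $(r/(s+r))^{(d-1)/2}$ factor'' is incorrect; raising $m$ does not change the $s$-integral at all without the integration-by-parts mechanism.

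A secondary point: your reduction to $r\leq|t|^{1/2}$ via the trivial $L^2$ bound is correct and matches the paper, and your identity $(h^2H)^{m'}\psi_m(r^2H)=(h/r)^{2m'}\psi_{m+m'}(r^2H)$ is a clean way to absorb $m'$ (the paper makes a similar reduction). But as written, the proof does not establish \eqref{hyp_1}.
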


\begin{proof}[Proof of Theorems \ref{demo_hyp_Hmn2-bis} and \ref{demo_hyp_Hmn2} ]

We only prove Theorem \ref{demo_hyp_Hmn2}, which is more difficult and let the
reader to check that the exact same proof allows us to get Theorem
\ref{demo_hyp_Hmn2-bis}, which is indeed easier since the quantity $I_\kappa$
(defined later in the proof) is vanishing.

\smallskip
\noindent
\underline{Step 1}: Some easy reductions.

Remark that the case $r\geq \sqrt{|t|}$ is easy via the bounded functional calculus,
indeed 
$$\|e^{itH}\psi_{m'}(h^2H) \psi_{m}(r^2H)\|_{L^2(B_r)\to L^2(\widetilde{B_r})}\leq
\|e^{it\cdotp}\psi_{m'}(h^2\cdotp)\psi_{m}(r^2\cdotp)\|_{L^{\infty}(\mathbb
R_+)}\lesssim 1 \lesssim \left( \frac{r^2}{|t|} \right)^{\frac d2}.$$
So now we only restrict our attention and assume that $r^2\leq |t|$.

Then assume that \eqref{hyp_1} is proved for every $h\in(0,r]$.  We aim to check
that it also holds for $h>r$. So fix balls of radius $r<h$. It comes
\begin{align*}
 \|e^{itH}& \psi_{m'}(h^2H) \psi_{m}(r^2H)\|_{L^2(B_r)\to L^2(\widetilde{B_r})} \\
 &\lesssim \frac{r^{2m}}{(\frac{h^2}{2}+r^2)^{m}}\|e^{itH} \psi_{m'}(\frac{h^2}{2}H)
\psi_{m}((\frac{h^2}{2}+r^2)H)\|_{L^2(B_r)\to L^2(\widetilde{B_r})}\\
 &\lesssim \left( \frac{r}{h} \right)^{2m} \|e^{itH}  \psi_{m'}(\frac{h^2}{2}H) 
\psi_{m}((\frac{h^2}{2}+r^2)H) \|_{L^2(B_\rho)\to L^2(\widetilde{B_\rho})}, 
\end{align*} 
 where $\rho=\sqrt{\frac{h^2}{2}+r^2} \geq r$, $\rho\simeq h$ and we write
$B_{\rho}=\frac{\rho}{r}B_r$ the dilated ball (similar notation for
$\widetilde{B_\rho}$).
Using \eqref{hyp_1} at the scale $\rho$ (since $\rho \geq h/\sqrt{2}$) yields 
\begin{align*}
 \|e^{itH} \psi_{m'}(h^2H) \psi_{m}(r^2H)\|_{L^2(B_r)\to L^2(\widetilde{B_r})} &
\lesssim \left( \frac{r}{h} \right)^{2m} \left( \frac{\rho^2}{|t|} \right)^{\frac d2}
\\
  & \lesssim \left( \frac{r}{h} \right)^{2m} \left( \frac{h^2}{|t|} \right)^{\frac d2} \\
  & \lesssim \left( \frac{r^2}{|t|} \right)^{\frac d2},
\end{align*}
where we have used that $m\geq d/2$ and (since $r\leq h$) 
$$\frac{r^{2m}}{h^{2m}}h^d = r^d \frac{r^{2m-d}}{h^{2m-d}}\leq r^d.$$  
So as soon as \eqref{hyp_1} will be proved for $h\leq r$, then the other case
immediately follows.

Consequently, we can restrict our study to $h\leq r$ and $r^2 \leq |t|$, that we now
assume for the sequel.

For an integer $m' \neq 0$, we have
$$e^{itH}\psi_{m'}(h^2H) \psi_{m}(r^2H)=  \left( \frac{h^2}{r^2} \right)^{m'}
e^{itH} e^{-h^2H} \psi_{m'+m}(r^2H).$$
Using $h\leq r$, it comes  
$$ \|e^{itH}\psi_{m'}(h^2H)\psi_{m}(r^2H)\|_{L^2(B_r) \to L^2(\widetilde{B_r})}
\lesssim \|e^{(it-h^2)H}\psi_{m'+m}(r^2H)\|_{L^2(B_r) \to L^2(\widetilde{B_r})}.$$

So if \eqref{hyp_1} is proved for $m'=0$ and some integer $m$ then by Theorem
\ref{thm:H}, it also holds for $m'=0$ and any integer $m'' \geq m$. Hence, by the
previous observation, \eqref{hyp_1} will hold for every $m'=m''-m\geq0$.

Finally, we can restrict our attention to prove \eqref{hyp_1} for $m'=0$ with $h\leq
r$ and $r^2 \leq |t|$, which is now supposed for the rest of the proof. 

\smallskip
\noindent
\underline{Step 2}: Decomposition into three regimes.

We fix the parameter $h$ and consider $e^{itH}e^{-h^2H}=e^{-zH}$ with $z=h^2-it$.
By the representation \eqref{astuce}, it comes
$$e^{-zH} = \int_0^{\infty} \cos(s\sqrt H)e^{-\frac{s^2}{4z}}\frac{ds}{\sqrt {\pi
z}}.$$
We split this integral into three ranges. Let us consider a smooth cut-off function
$\chi \in C^{\infty}(\mathbb R_+)$ such that 
$\begin{cases}
 &0\leq \chi \leq 1\\
 &\chi(x)=1 \text{ if } x \in [0, \frac{|t|}{r}]\\
 &\chi(x)=0 \text{ if } x \in [\frac{2|t|}{r}, +\infty]
\end{cases}$,
with $\forall n \in \mathbb N$, $\|\chi^{(n)}\|_{L^\infty} \lesssim\left(\frac{
r}{|t|}\right)^n$. 

We split the integral into three terms 
$$e^{-zH}= \int \chi(z) \cos(s\sqrt H)e^{-\frac{s^2}{4z}}\frac{ds}{\sqrt{\pi z}} +
\int_{\frac{|t|}{r}}^{\kappa} (1-\chi(z))\cos(s\sqrt H)e^{-\frac{s^2}{4z}}\frac{ds}{\sqrt
{\pi z}} + I_\kappa(H),$$
where $I_\kappa=0$ if $\kappa=\infty$ and else
$$I_\kappa(H):= \int_{\kappa}^\infty (1-\chi(z))\cos(s\sqrt
H)e^{-\frac{s^2}{4z}}\frac{ds}{\sqrt {\pi z}}.$$

\smallskip
\noindent
\underline{Step 3}: The two last regimes.

The second term is estimated using Assumption \ref{hyp_cossH} as follows (we
recall that $z=h^2-it$ so that $|z|\simeq |t|$)
\begin{align*}
 &\left \| \int_{\frac{|t|}{r}}^{\kappa} (1-\chi(z)) \cos(s\sqrt H) \psi_{m}(r^2H)
e^{-\frac{s^2}{4z}} \frac{ds}{\sqrt z} \right \|_{L^2(B_r) \to L^2(\widetilde
{B_r})} \\
 &\lesssim \int_{\frac{|t|}{r}}^{\kappa} \left( \frac rs \right)^{\frac{d-1}{2}} \left(
1+\frac{|L-s|}{r} \right)^{-\frac{d+1}{2}} \frac{ds}{\sqrt{|t|}} \\
 &\lesssim \int_{\frac{|t|}{r}}^{\kappa} \left( \frac{r}{\frac{|t|}{r}}
\right)^{\frac{d-1}{2}} \left( 1+\frac{|L-s|}{r} \right)^{-\frac{d+1}{2}}
\frac{ds}{\sqrt{|t|}} \\
 &\lesssim \int_{0}^{\infty} \left( \frac{r^2}{|t|} \right)^{\frac{d-1}{2}} \left( 1+u
\right)^{-\frac{d+1}{2}} \frac{rdu}{\sqrt{|t|}} \\
 &\lesssim \left( \frac{r^2}{|t|} \right)^{\frac d2}.
\end{align*}

The last term $I_\kappa(H)$ is estimated by only using the $L^2$-boundedness of the
wave propagator:
\begin{align*}
\| I_\kappa(H)\psi_m(r^2H) \|_{L^2(B_r) \to L^2(\widetilde {B_r})} & \lesssim 
\int_{\kappa}^{+\infty} \|\cos(s\sqrt H)\psi_{m}(r^2H)\|_{L^2(B_r) \to
L^2(\widetilde {B_r})} e^{-\frac{s^2}{4}\Re(\frac{1}{z})} \frac{ds}{\sqrt{|z|}} \\
& \lesssim  \int_{\kappa \sqrt{\Re(\frac{1}{4z}})}^{+\infty}
e^{-u^2}\frac{du}{\sqrt{\Re(\frac{1}{z})}\sqrt{|z|}} \\
&\lesssim  \left(\int_0^{+\infty} e^{-\frac{u^2}{2}} du\right) e^{-\frac{\kappa^ 2
\Re(\frac{1}{4z})}{2}} \left( \sqrt{\Re\left(\frac1z\right)} \sqrt{|t|} \right)^{-1}.
\end{align*}
Given that $\Re(\frac{1}{z})=\frac{h^2}{h^4+t^2} \gtrsim \frac{h^2}{t^2}$ (since we assumed $|t|\geq h^2$, see Step 1), we get 
$$ \| I_\kappa(H) \psi_m(r^2H)\|_{L^2(B_r) \to L^2(\widetilde{B_r})} \lesssim
\frac{|t|^{\frac 12}}{h}\left( \frac{h}{|t|} \right)^{-k},$$ 
for $k>0$ as large as we want because $\frac{h}{|t|} \gtrsim 1$ (indeed $|t|\leq
h^{1+\varepsilon} \lesssim h$). Note that the implicit constant here may depend on
$\kappa$.
Since we have reduced the situation to $h \leq r$, it comes 
$$\frac{|t|^{\frac 12}}{h}\left( \frac{|t|}{h} \right)^k \lesssim \left( \frac{h}{\sqrt{|t|}}
\right)^d \lesssim \left(\frac{r}{\sqrt{|t|}}\right)^d$$ 
as soon as $|t|^{\frac 12 + k + \frac d2} \leq h^{1+k+d}$, i.e. $|t|\leq
h^{\frac{1+k+d}{\frac 12 + k + \frac d2}}\leq h^{1+\frac{\frac 12 + \frac d2}{\frac
12 + k + \frac d2}}$ which is true for $k$ large enough since $|t|\leq
h^{1+\varepsilon}$.

So we have obtained the desired bound for the two last terms. It remains to study
the first and more difficult one.

\smallskip
\noindent
\underline{Step 4}: The first regime.

We aim to use integration by parts in $s$. For all integer $n\geq 0$, all $s>0$ and
$\Re(z)>0$, we have 
$$\partial_s^n \left( e^{-\frac{s^2}{4z}} \right) = e^{-\frac{s^2}{4z}} \left(
c_n\frac{s^{n}}{z^{n}} + c_{n-1}\frac{s^{n-2}}{z^{n-1}}+ \ldots + c_{n-2
\lfloor\frac n2\rfloor}\frac{s^{n-2 \lfloor\frac n2\rfloor}}{z^{n-\lfloor\frac
n2\rfloor}} \right),$$
where $(c_j)_j$ are numerical constants.
Making $2n$ integrations by parts gives (as soon as $m\geq n$) 
\begin{align*}
 &\int_0^{\infty} \cos(s\sqrt H) \psi_m(r^2 H)\chi(s) e^{-\frac{s^2}{4z}} ds \\
 =&\int_0^{\infty} \frac{\cos(s\sqrt H)}{H^n}  \psi_m(r^2 H) \partial_s^{2n}
\left[\chi(s) e^{-\frac{s^2}{4z}} \right] ds \\
 =& \int_0^{\infty} {\cos(s\sqrt H)} r^{2n}\psi_{m-n}(r^2 H)\sum_{k=0}^{2n} c_k
\chi^{(2n-k)}(s) \partial_s^k\left( e^{-\frac{s^2}{4z}} \right)ds \\
 =& \int_0^{\infty} \cos(s\sqrt H) r^{2n}\psi_{m-n}(r^2 H) \sum_{k=0}^{2n}
\chi^{(2n-k)}(s) e^{-\frac{s^2}{4z}} \left( c_k\frac{s^{k}}{z^{k}} + \ldots +
c_{n-2 \lfloor\frac n2\rfloor}\frac{s^{k-2 \lfloor\frac
k2\rfloor}}{z^{k-\lfloor\frac k2\rfloor}} \right)ds,
\end{align*}
where $c_j$ always denotes some numerical constants, possibly changing from line to
line.
The behaviour of the sum over $k$ is governed by its two extremal terms (that is $k=0$ and $k=2n$
where we only keep the first and last terms of the sum) which leads us to (since
$|z|\simeq |t|$)
\begin{align*}
&\left \| \int_0^{+\infty} \cos(s\sqrt H)\psi_m(r^2H) \chi(s) e^{-\frac{s^2}{4z}}
\frac{ds}{\sqrt z} \right \|_{L^2(B_r) \to L^2(\widetilde{B_r})} \\
 \lesssim& \int_0^{2\frac{|t|}{r}} \|\cos(s\sqrt H) \psi_{m-n}(r^2H)\|_{L^2(B_r) \to
L^2(\widetilde{B_r})} r^{2n} \left[ \left(\frac{r}{|t|} \right)^{2n} + \left(\frac{s}{|t|}
\right)^{2n} + \frac{1}{|t|^n} \right] \frac{ds}{\sqrt{|t|}} \\
 \lesssim& \int_0^{2\frac{|t|}r} \left(\frac{r}{r+s} \right)^{\frac{d-1}{2}} \left( 1+
\frac{|L-s|}{r} \right)^{-\frac{d+1}{2}} r^{2n} \left[ \left(\frac{r}{|t|} \right)^{2n}
+ \left(\frac{s}{|t|} \right)^{2n} + \frac{1}{|t|^n} \right] \frac{ds}{\sqrt{|t|}},
\end{align*}
where we used Assumption \ref{hyp_cossH} (this is allowed if $m-n\geq m_0$) and
$L:=d(B_r,\widetilde{B_r})$.
If $n=\lceil\frac{d-1}{2}\rceil$, then firstly 
\begin{align*}
 \int_0^{2\frac{|t|}{r}} \left(\frac{r}{s+r} \right)^{\frac{d-1}{2}} \left( 1+
\frac{|L-s|}{r} \right)^{-\frac{d+1}{2}} \left( \frac{r^2}{|t|}\right)^{2n}
\frac{ds}{\sqrt{|t|}} &\leq \int_0^{+\infty} (1+u)^{-\frac{d+1}{2}} \frac{rdu}{\sqrt{|
t|}} \left(\frac{r^2}{|t|}\right)^{2n} \\
 &\lesssim \left(\frac{r^2}{|t|}\right)^{2n+\frac12} \leq
\left(\frac{r^2}{|t|}\right)^{\frac d2},
\end{align*}
since $d>1$ and  $2n+\frac 12 \geq \frac d2$.
For the second term, we have
\begin{align*}
 \int_0^{\frac{|t|}{r}}& \left(\frac{r}{r+s} \right)^{\frac{d-1}{2}} \left( 1+
\frac{|L-s|}{r} \right)^{-\frac{d+1}{2}} \left( \frac{rs}{|t|}\right)^{2n}
\frac{ds}{\sqrt{|t|}} \\
 & \leq  \frac{r^{\frac{d-1}{2}+2n}}{|t|^{2n}} \int_0^{\frac{|t|}{r}} \left( 1+
\frac{|L-s|}{r} \right)^{-\frac{d+1}{2}} s^{2n-\frac{d-1}{2}} \frac{ds}{\sqrt{|t|}} \\
 & \lesssim \frac{r^{\frac{d-1}{2}+2n}}{|t|^{2n}} \left( \frac{|t|}{r}
\right)^{2n-\frac{d-1}{2}} \int_0^{+\infty}(1+u)^{-\frac{d+1}{2}}\frac{rdu}{\sqrt{|
t|}}\lesssim \left(\frac{r^{2}}{|t|}\right)^{\frac d2},
\end{align*}
since $2n-\frac{d-1}{2} \geq 0$.
And for the third and last term, it comes
\begin{align*}
 \int_0^{\frac{|t|}{r}} \left(\frac{r}{r+s} \right)^{\frac{d-1}{2}} \left( 1+
\frac{|L-s|}{r} \right)^{-\frac{d+1}{2}} \frac{r^{2n}}{|t|^n} \frac{ds}{\sqrt{|t|}}
&\leq \left( \frac{r^2}{|t|} \right)^n\int_{0}^{+\infty} (1+u)^{-\frac{d+1}{2}}
\frac{rdu}{\sqrt{|t|}}\\
 &\lesssim \left( \frac{r^2}{|t|} \right)^{n+\frac 12}\leq \left( \frac{r^2}{|t|}
\right)^{\frac d2},
\end{align*}
since $n+\frac 12 \geq \frac d2$.
The intermediate terms in the integrations by parts have an intermediate behaviour.
We point out that these last computations required $m-n\geq m_0$ which is true,
since $m \geq m_0+\left \lceil\frac{d-1}{2}\right\rceil$ and $n=\lceil \frac{d-1}{2}
\rceil$.

That concludes the proof, since each of the three terms have a satisfying bound.
\end{proof}

\subsection{A digression about these dispersive properties and the spectral measure}

Let us assume Assumption \ref{hyp_cossH} for $\kappa=1$.

Following the same reasoning as in Sections \ref{section_dispersion} and \ref{section_strichartz}, it
comes that the assumed inequality
$$\|\cos(s\sqrt H) \psi_{m_0}(r^2H)\|_{L^2(B) \to L^2(\widetilde B)} \lesssim
\left(\frac{r}{s+r}\right)^{\frac{d-1}{2}} \left( 1+\frac{|L-s|}{r}
\right)^{-\frac{d+1}{2}}$$
allows us to prove that $\cos(s \sqrt{H})$ is bounded from the Hardy space $H^1$ to
$\BMO$ (built with some parameter $M$ sufficiently large) with
$$ \| \cos(s \sqrt{H})\psi_{1}(r^2H) \|_{H^1 \to \BMO} \lesssim r^{-\frac{d+1}{2}}
(s+r)^{-\frac{d-1}{2}}, \quad \forall |s|\leq 1.$$
That corresponds to the $H^1\to \BMO$ counterpart of more classical $L^1\to
L^\infty$ dispersive estimates. Following interpolation and Keel-Tao's argument (as
detailed previously) for the wave propagator, it allows us to deduce Strichartz
estimates for the wave equations:
for exponents $p,q$ {\it wave-admissible} and $\delta\geq 0$ satisfying 
$$\frac 1p + \frac dq = \frac d2-\delta,$$
every solution $u=\cos(t\sqrt{H})u_0$ of the problem
 $$\begin{cases}
  &\partial^2_{tt}u+Hu=0 \\
  &u_{|t=0}=u_0\\
  & \partial_t u_{|t=0}=0
 \end{cases}$$ satisfies: 
 \begin{equation} \|u\|_{L^p([-1,1], L^q)}\lesssim \|u_0\|_{W^{\delta, 2}}.
\label{stri} \end{equation}

Such Strichartz estimates for the wave equation, allow us to deduce some sharp
$L^2-L^q$ estimates for the spectral projector (introduced by Sogge \cite{Sogge}),
as detailed by Smith in \cite{Smith}. Without details, we just sketch the proof of
\cite{Smith} to check that it can be adapted to this very general setting.

 Indeed, consider $\lambda>0$ and the spectral projector
$$ \Pi_\lambda = {\bf 1}_{[\lambda, \lambda +1)}(\sqrt{H}).$$
Define the function
$$ \rho_\lambda(x):=\int_{-1}^{1} e^{-it \lambda} \cos(t x) dt$$
which a direct computation gives
$$ \rho_\lambda(x) = \frac{\sin(\lambda-x)}{\lambda -x} + 
\frac{\sin(\lambda+x)}{\lambda +x}.$$
So we observe that $\rho_\lambda(x)\in[\frac{1}{2},2]$ if $x\in[\lambda,\lambda+1)$.
As a consequence, by bounded $L^2$-functional calculus, we deduce that for $f\in L^2$
$$ \Pi_\lambda(f) = \int_{-1}^{1} e^{-it \lambda} \cos(t \sqrt{H})
\left[\rho_\lambda(\sqrt{H})^{-1} \Pi_\lambda f\right] dt,$$
with $\rho_\lambda(\sqrt{H})^{-1} \Pi_\lambda$ a uniformly $L^2$-bounded operator
(and also in any $L^2$ Sobolev space since it commutes with $H$).

By applying \eqref{stri}, we deduce that for $q\in[\frac{2(d+1)}{d-1},\infty)$ 
\begin{align*}
 \|\Pi_\lambda(f)\|_{L^q}  & \lesssim \left\|\cos(t \sqrt{H})
\left[\rho_\lambda(\sqrt{H})^{-1} \Pi_\lambda f\right] \right\|_{L^2([-1,1],L^q)} \\
  & \lesssim \left\| \rho_\lambda(\sqrt{H})^{-1} \Pi_\lambda f  \right\|_{W^{\delta(q), 2}} \\
   & \lesssim \left\| \Pi_\lambda f  \right\|_{W^{\delta(q), 2}} \lesssim \lambda^
{\delta(q)} \|f\|_{L^2},
\end{align*}
where $\delta(q)$ is given by
$$\frac 12 + \frac dq = \frac d2-\delta(q).$$

By interpolating with the trivial $L^2-L^2$ bound, we deduce (as explained in
\cite{Smith}) that
\begin{equation} \| \Pi_\lambda \|_{L^2 \to L^q} \lesssim \left\{ \begin{array}{l}
\lambda^{\frac{d-1}{2}\left(\frac{1}{2}-\frac{1}{q}\right)}, \quad \textrm{if $2\leq
q \leq 2\frac{d+1}{d-1}$} \\
\lambda^{d\left(\frac{1}{2}-\frac{1}{q}\right)-\frac{1}{2}}, \quad \textrm{if $q
\geq 2\frac{d+1}{d-1}$} \\
\end{array}
\right. \label{es:1}
\end{equation}

Let us point out that if now we assume Assumption \ref{hyp_cossH} for
$\kappa=\infty$, then by combining Theorems \ref{thm_strichartz} and
\ref{demo_hyp_Hmn2-bis}  we get free dispersive estimates without loss of
derivatives: for $p\in(1,2]$ then
$$ \|e^{it H}\|_{L^p \to L^{p'}} \lesssim |t|^{-\frac{d}{2}\left(\frac{1}{p} -
\frac{1}{p'} \right)}$$
uniformly  with respect to $t\in {\mathbb R}$. Then if the operator $H$ (or $\sqrt{H}$) has
a spectral measure with a Radon-Nicodym derivative, then following \cite[Corollary
3.3]{BO}, we know that {\it Restriction estimates} hold which are:
$$  \left\| \frac{dE_H(\lambda)}{d\lambda} \right\|_{L^p \to L^{p'}} \lesssim
\lambda^{\frac{d}{2}(\frac{1}{p} - \frac{1}{p'}) -1},$$ 
where $E_H(\lambda)$ is the spectral measure of $H$ and $p\in[1,\frac{2d}{d+2})$. We
also have other estimates for higher order derivatives and we refer to \cite{BO} for
more details.
Such estimates give in particular for $\lambda\geq 1$
\begin{align}
  \|\Pi_\lambda\|_{L^p \to L^{p'}}  & \lesssim \int_{\lambda^2}^{(\lambda +1)^2}
\left\|\frac{dE_H(s)}{ds} \right\|_{L^p \to L^{p'}} ds \nonumber \\
  & \lesssim \int_{\lambda^2}^{(\lambda +1)^2} s^{\frac{d}{2}(\frac{1}{p} -
\frac{1}{p'}) } \frac{ds}{s} \nonumber \\
  & \lesssim  \lambda^{d(\frac{1}{p} - \frac{1}{p'})- 1 } \lesssim
\lambda^{2d(\frac{1}{p} - \frac{1}{2})- 1 }. \label{es:2}
  \end{align}
We then exactly recover the estimate in \eqref{es:1} but the range for $q=p'$ in
\eqref{es:1} is larger than the one obtained by \eqref{es:2}: indeed the range in
\eqref{es:1} is given by the sharp critical exponent $1\leq p \leq 2
\frac{d+1}{d+3}$.

\section{The Euclidean and Riemannian cases}\label{section_euclidean_case}

To enhance the legitimacy of Assumption \ref{hyp_cossH}, we check its validity  for
the Laplace-Beltrami operator $H=-\Delta$ in four situations:
\begin{itemize}
\item the Euclidean space $X=\mathbb R^d$ with $\kappa=\infty$;
\item any smooth compact Riemannian manifold of dimension $d$ and $\kappa$ is given
by the injectivity radius;
\item any smooth noncompact Riemannian manifold of dimension $d$, with
$C^\infty_b$-geometry and $\kappa$ given by the geometry;
\item Smooth perturbation of the Euclidean space $X=\mathbb R^d$, $H= -\frac{1}{\rho}
\nabla \cdot(A \nabla \cdot)$ (for uniformly nondegenerate function $\rho$ and
matrix $A$, with bounded derivatives) which is a self-adjoint operator on $\R^d$,
equipped with the measure $d\mu = \rho dx$, with $\kappa<\infty$ (given  by $A$ and
$\rho$).
\end{itemize}

\begin{prop} \label{prop:ass} In these four previous cases, Assumption
\ref{hyp_cossH} is satisfied.
\end{prop}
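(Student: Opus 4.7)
The plan is to reduce, in each of the four listed cases, the operator norm bound to a pointwise kernel estimate, and then obtain that kernel estimate by a short-time parametrix/FIO analysis of $\cos(s\sqrt{H})\psi_{m_0}(r^2H)$. Since the ambient spaces are Ahlfors regular of dimension $d$, we have $\mu(B_r)\lesssim r^d$; so if $K_s(x,y)$ denotes the Schwartz kernel of $\cos(s\sqrt{H})\psi_{m_0}(r^2H)$, Cauchy--Schwarz gives
$$\|\cos(s\sqrt H)\psi_{m_0}(r^2H)\|_{L^2(B_r)\to L^2(\widetilde{B_r})} \leq \mu(B_r)^{1/2}\mu(\widetilde{B_r})^{1/2}\!\!\!\sup_{x\in\widetilde{B_r},\,y\in B_r}|K_s(x,y)|,$$
so it suffices to prove
$$|K_s(x,y)|\lesssim r^{-d}\left(\frac{r}{s+r}\right)^{(d-1)/2}\left(1+\frac{|L-s|}{r}\right)^{-(d+1)/2}$$
uniformly on $B_r\times \widetilde{B_r}$, with $L=d(B_r,\widetilde{B_r})$.

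Writing $2\cos(s\sqrt H)=e^{is\sqrt H}+e^{-is\sqrt H}$, the plan is to express $e^{\pm is\sqrt H}\psi_{m_0}(r^2H)$ as an FIO. In the Euclidean case we have the explicit oscillatory integral representation, and after rescaling $\xi=\eta/r$, setting $\sigma=s/r$ and $z=(x-y)/r$, the kernel reduces to $r^{-d}\widetilde K(\sigma,z)$ with $\widetilde K(\sigma,z)=\int e^{iz\cdot\eta\pm i\sigma|\eta|}\psi_{m_0}(|\eta|^2)\,d\eta/(2\pi)^d$. For the compact and $C^\infty_b$ noncompact Riemannian settings (with $\kappa$ the injectivity radius, resp.\ given by bounded geometry), the Hadamard parametrix provides, for $|s|<\kappa$, an FIO representation with phase function $\phi_\pm(x,y,\xi)=\langle\xi,\exp_y^{-1}(x)\rangle\pm s|\xi|_{g(y)}$ and amplitude localized at $|\xi|r\sim 1$; the $C^\infty_b$ geometry assumption yields uniform symbol bounds in $y$. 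For the perturbed Euclidean operator, Smith's $C^{1,1}$ wave-packet parametrix \cite{Smith} (cf.\ also \cite{Blair}) furnishes an analogous FIO representation valid up to some $\kappa<\infty$ determined by the coefficients.

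In all four cases, we rescale as above so that the symbol becomes a Schwartz function of $\eta$ localized at $|\eta|\sim 1$. The desired estimate on $\widetilde K(\sigma,z)$ then follows from a classical stationary/non-stationary phase analysis: when $\sigma\lesssim 1$, the integrand is Schwartz in $\eta$ uniformly and $\widetilde K(\sigma,z)$ inherits rapid decay in $|z|$, giving the bound since the prefactor $(r/(s+r))^{(d-1)/2}\simeq 1$; when $\sigma\gtrsim 1$, stationary phase in the angular variables on the ``light cone'' $|z|=\sigma$ (resp.\ $d_g(x,y)=s$) yields the dispersive factor $\sigma^{-(d-1)/2}$, while $N$ integrations by parts in the direction $\partial_\eta(z\cdot\eta\pm\sigma|\eta|)$, transverse to the cone, yield the off-cone decay $(1+||z|-\sigma|)^{-N}$, which we apply with $N=(d+1)/2$. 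Translating back via $|z|\sim L/r$ and $\sigma=s/r$, and using $r^{-d}\cdot\mu(B_r)^{1/2}\mu(\widetilde{B_r})^{1/2}\lesssim 1$, gives precisely the assumed bound. The main obstacle is carrying out the stationary phase uniformly in the parameter $r$ and in the base point $y$ in the Riemannian cases; it is handled by the $C^\infty_b$-control on the metric (or on $\rho,A$ in the perturbed case), which ensures that the amplitude and the Hessian of the phase on the critical set have symbol bounds independent of $y$, and hence the non-degenerate stationary phase expansion holds uniformly.
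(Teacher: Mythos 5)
Your proof is essentially correct but takes a genuinely different route from the paper. The paper avoids Fourier-integral-operator machinery entirely: it first establishes (for the Euclidean case in Appendix~\ref{section_appendice_1}, and by reference to the Hadamard parametrix for the Riemannian cases) the estimate \eqref{eq:fspp2} on $\cos(s\sqrt H)$ \emph{without} any frequency cutoff, valid only well inside the light cone ($L\leq s-10r$), using the explicit spherical-mean formulas and Green's identity. It then treats the full claim by a case analysis: the trivial regime $s\lesssim r$ by Davies--Gaffney; the deep-interior regime by inserting a partition of unity $(\chi_{B_k})_k$ between $\cos(s\sqrt H)$ and $\psi(r^2H)$, applying \eqref{eq:fspp2} together with Gaussian off-diagonal bounds, and summing geometric tails; and the transition region $s-10r\leq d(B_k,\widetilde B)\leq s+2r$ by a direct (and dimension-parity-dependent) computation with the explicit wave kernel. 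Your approach instead reduces, by Cauchy--Schwarz and Ahlfors regularity, to a \emph{pointwise} bound on the Schwartz kernel of the frequency-localized operator $\cos(s\sqrt H)\psi_{m_0}(r^2H)$, writes that kernel as a (rescaled) oscillatory integral, and runs the classical stationary/non-stationary phase argument. That is more unified: it handles all regimes at once, does not distinguish between odd and even dimension, and keeps the frequency cutoff built in from the start rather than bounding the raw wave operator. The trade-off is that it outsources the core work to the stationary-phase and FIO formalism, whereas the paper's argument is elementary, explicit, and self-contained (and therefore somewhat easier to adapt to settings where classical FIO calculus is not directly available). For the fourth case ($C^{1,1}$ coefficients), both arguments defer in the end to Smith's wave-packet parametrix; note that this is not a classical FIO, so the stationary-phase step must be read as a statement about the wave-packet decomposition rather than a direct oscillatory-integral estimate.

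Two small technical remarks. First, the phase $e^{\pm i\sigma|\eta|}$ is not smooth at $\eta=0$, so the claim that the rescaled integrand ``is Schwartz in $\eta$'' needs $m_0$ taken sufficiently large so that the vanishing of $\psi_{m_0}(|\eta|^2)$ at the origin compensates the singularity of the radial derivatives; this is harmless because Assumption~\ref{hyp_cossH_intro} only requires the existence of \emph{some} integer $m_0$, but it should be stated. Second, $(d+1)/2$ may not be an integer; the integration-by-parts argument produces decay $(1+||z|-\sigma|)^{-N}$ for integer $N$, and one should take $N=\lceil (d+1)/2\rceil$ and then use $(1+t)^{-N}\leq (1+t)^{-(d+1)/2}$.
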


The proof is based on the following properties (which are a refinement of the finite
speed propagation property): for $B, \widetilde B$ two balls of radius $r$, then with
$L=d(B,\widetilde B)$ and $s\in(0,\kappa)$:
\begin{itemize}
\item If $L>s+2r$ then the finite speed propagation property occurs
 \begin{equation} \|\cos(s\sqrt H)\|_{L^2(B) \to L^2(\widetilde B)}=0; \label{eq:fspp}
 \end{equation}
\item if $L\leq s-10r$ then 
\begin{equation}
\|\cos(s\sqrt H)\|_{L^2(B) \to L^2(\widetilde B)} \lesssim \left( \frac{r}{r+s}
\right)^{\frac{d-1}{2}} \left(1+\frac{|L-s|}{r}\right)^{-\frac{d+1}{2}}.
\label{eq:fspp2}
\end{equation}
\end{itemize}

We refer the reader to the introduction for more details about the finite speed
propagation property, which yields \eqref{eq:fspp}. Property \eqref{eq:fspp2} is
quite standard, see for example \cite{Berard} for the case of a compact Riemannian
manifold (where a short time parametrix is detailed) and appendix \ref{section_appendice_1} where we detail computations in the Euclidean situation.

In particular, we partly recover the results of \cite{BGT,ST} (up to a loss
$\varepsilon>0$ as small as we want). Indeed, by combining Proposition \ref{prop:ass}
with Theorems \ref{thm_strichartz} and \ref{demo_hyp_Hmn2} (with
$\gamma=1+\varepsilon$), we have the following:

\begin{corollaire} Any smooth compact Riemannian manifold or non-compact Riemannian
manifold with a $C^\infty_b$ geometry (or as previously for a smooth perturbation of
the Euclidean setting with suitable functions $\rho,A$) satisfy Strichartz estimates
with a loss of derivatives $\frac{1}{p}+\varepsilon$, for every $\varepsilon>0$.
\end{corollaire}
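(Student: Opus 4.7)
The plan is to verify Assumption \ref{hyp_cossH} by splitting the parameter region into three regimes according to the relative sizes of $L := d(B_r, \widetilde{B_r})$, $s$, and $r$. The two ``outer'' regimes are handled by the two properties \eqref{eq:fspp} and \eqref{eq:fspp2} quoted by the authors, and the remaining case reduces to a bound that is independent of $|L-s|$ and must be extracted from the short-time parametrix of the wave operator in each of the four geometric settings.

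First, if $L > s + 2r$, the finite speed propagation property \eqref{eq:fspp} gives $\cos(s\sqrt{H})\mathbf{1}_{B_r} \equiv 0$ on $\widetilde{B_r}$, and since $\psi_{m_0}(r^2 H)$ is uniformly $L^2$-bounded (by Theorem \ref{thm_calcul_fonctionnel}) and $H$ commutes with $\cos(s\sqrt H)$, the claim is trivial. Next, in the ``exterior'' regime $L \leq s - 10r$, I would invoke \eqref{eq:fspp2} directly and note that composing on the right with $\psi_{m_0}(r^2 H)$ costs only an absolute constant; in this regime, $s + r \simeq s$ so the announced bound matches exactly.

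The main step is the ``intermediate'' regime $s - 10r < L \leq s + 2r$. Here $|L-s| \lesssim r$, so the factor $(1 + |L-s|/r)^{-(d+1)/2}$ is $\Theta(1)$, and the required inequality reduces to
\begin{equation*}
\|\cos(s\sqrt H)\psi_{m_0}(r^2 H)\|_{L^2(B_r) \to L^2(\widetilde{B_r})} \lesssim \left(\frac{r}{s+r}\right)^{\frac{d-1}{2}}.
\end{equation*}
This is where the spectral cut-off $\psi_{m_0}(r^2H)$ is essential: it localizes the frequency $\sqrt{H}$ at scale $r^{-1}$. Using the short-time parametrix representation of $\cos(s\sqrt H)$ (Hadamard's parametrix in the Riemannian case, the explicit Kirchhoff/Poisson formula in the Euclidean case, Smith's parametrix \cite{Smith} for the variable-coefficient perturbations) one can write the kernel of $\cos(s\sqrt H)\psi_{m_0}(r^2H)$ as a Fourier integral operator whose phase is stationary on $\{d(x,y) = s\}$ and whose amplitude is supported in $|\xi| \sim r^{-1}$. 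A standard stationary-phase/TT${}^*$ argument in the spirit of the computation sketched in Appendix \ref{section_appendice_1} then yields the factor $(r/(s+r))^{(d-1)/2}$, which is exactly the decay of the surface measure of the sphere of radius $s$ on scale $r$. For the Euclidean case this follows by explicit Fourier-side computation; for compact Riemannian manifolds it follows from the parametrix of B\'erard \cite{Berard} valid up to the injectivity radius (which defines $\kappa$); for $C^\infty_b$ noncompact manifolds the same parametrix construction applies uniformly thanks to the bounded geometry; and for smooth perturbations of $\R^d$ one uses Smith's \cite{Smith} and Blair's \cite{Blair} constructions.

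The main obstacle is treating the four geometric settings in a unified way: the precise regularity threshold for the parametrix differs in each case, and the non-stationary phase decay yielding the factor $(1+|L-s|/r)^{-(d+1)/2}$ in the outer regime \eqref{eq:fspp2} must be checked to hold uniformly in the geometry. Once the Euclidean prototype is established (as in Appendix \ref{section_appendice_1}), the other three cases follow by invoking the corresponding parametrix constructions, since the wave kernel in each setting has, up to the scale $\kappa$, the same microlocal structure as the Euclidean one; in particular the spatial decay in $|L-s|/r$ is a consequence of integration by parts against the phase, which is insensitive to the specific geometry as long as the parametrix is available.
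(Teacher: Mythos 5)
Your proposal targets the wrong level of the argument. The paper's proof of this Corollary is a one-line combination: the four geometric settings satisfy Assumption~\ref{hyp_cossH} (Proposition~\ref{prop:ass}), hence Theorem~\ref{demo_hyp_Hmn2} yields Property $(H_m(|t|^{-d/2}))$ whenever $|t|\leq h^{1+\varepsilon}$, and Theorem~\ref{thm_strichartz} with $\gamma=1+\varepsilon$ converts that into Strichartz estimates with loss $\frac{1+\varepsilon}{p}$ derivatives. You never mention this chain; instead you offer a sketch of a proof of Proposition~\ref{prop:ass} itself, which is a different (and already-proved) statement, and you stop there without deriving the Strichartz estimate the Corollary actually asserts.

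Within your sketch of Assumption~\ref{hyp_cossH} there is also a real gap: the case split on $L=d(B_r,\widetilde{B_r})$ versus $s\pm O(r)$ does not cleanly cope with the fact that $\psi_{m_0}(r^2H)$ destroys compact support. In your regime $L>s+2r$ you invoke finite speed propagation for $\cos(s\sqrt H)$ and commutativity with $\psi_{m_0}(r^2H)$ to declare the operator ``trivially'' zero. It is not: after commuting, $\cos(s\sqrt H)\mathbf1_{B_r}$ is supported in $\{d(\cdot,B_r)\leq s\}$, but $\psi_{m_0}(r^2H)$ then spreads this support, and the restriction to $\widetilde{B_r}$ is only Gaussian-small, not zero. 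Similarly, in the regime $L\leq s-10r$ you say that post-composing \eqref{eq:fspp2} with $\psi_{m_0}(r^2H)$ ``costs only an absolute constant''; this is unjustified because the function $\psi_{m_0}(r^2H)f$ lives everywhere, so the bound on $\cos(s\sqrt H)$ must be summed over all the places it lives and those pieces see different effective distances. The paper's mechanism for doing exactly this bookkeeping is the bounded covering $(B_k)_k$ and the smooth partition of unity $\chi_{B_k}$, which lets one apply the finite speed propagation property and the two-sided estimate \eqref{eq:fspp2} ball by ball and then sum the resulting Gaussian tails (see \eqref{eq:sum}). Your sketch omits this covering entirely, and the case split as you have formulated it cannot be made rigorous without it (or an equivalent Gaussian-tail accounting).
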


As a conclusion, we have obtained that as soon as we have suitable (short time) $L^2-L^2$ microlocalized dispersive properties on the wave equation 
(Assumption \ref{hyp_cossH}) then we can obtain their Strichartz estimates and dispersive estimates for Schr\"odinger equation (with an eventual 
loss of derivatives if $\kappa<\infty$). We just point out that in the case of a convex subset of the Euclidean space with a boundary, 
then wave operators for the Dirichlet Laplacian do not satisfy Assumption \ref{hyp_cossH} (since there is a loss of $1/4$ in the main 
exponent), see \cite{ILP} by Ivanovici, Lebeau and Planchon.

\begin{proof}[Proof of Proposition \ref{prop:ass}]

We detail the proof in the Euclidean case with $\kappa=\infty$. We let the reader to
check that everything still holds (up to some change of notations) for a compact
Riemannian manifold with $\kappa$ given by the injectivity radius. Indeed, the proof
relies on \eqref{eq:fspp2} and a precise formulation of the wave kernel around the
light cone, which is obtained by the Hadamard parametrix (and has the same form as
in the Euclidean case), see \cite{Berard}. So let us focus on the Euclidean
situation.

First, if $s\leq 10 r$ then by the finite speed propagation property and
Davies-Gaffney estimates, we have
\begin{align*} 
\|\cos(s\sqrt H) \psi(r^2H)\|_{L^2(B) \to L^2(\widetilde B)} & \leq
\|\psi(r^2H)\|_{L^2(B) \to L^2(10 \widetilde B)} \\
 & \lesssim e^{-\frac{d(B,10 \widetilde B)^2}{4r^2}} \lesssim
\left(1+\frac{d(B,\widetilde B)}{r}\right)^{-\frac{d+1}{2}}
=\left(1+\frac{L-s+s}{r}\right)^{-\frac{d+1}{2}}\\
 & \lesssim
\left(\frac{r}{r+s}\right)^{\frac{d-1}{2}}\left(1+\frac{|L-s|}{r}\right)^{-\frac{d+1}{2}},
\end{align*}
since $s\lesssim r$, which is the desired estimate.

So we now only focus in the situation where $s\geq 10 r$ and consider $(B_k)_k$ a
bounded covering of $X$, by balls of radius $r$. Let $\chi_{B_k}$ be a smooth
partition of the unity, adapted to this covering: so $\chi_{B_k}$ is supported in
$2B_k$, takes values in $[0,1]$ and satisfies for all $n \in \mathbb N$ 
\begin{equation}\label{regularity}
 \|\nabla^n \chi_{B_k}\|_{L^{\infty}} \leq \frac{1}{r^n}.
\end{equation}

We decompose 
$$ \|\cos(s\sqrt H) \psi(r^2H)\|_{L^2(B) \to L^2(\widetilde B)} \leq \sum_{B_k}
\|\cos(s\sqrt H)(\chi_{B_k}. \psi(r^2H))\|_{L^2(B) \to L^2(\widetilde B)}.$$
Due to \eqref{eq:fspp}, the sum is restricted to balls $B_k$ such that
$d(B_k,\widetilde B)\leq s+2r$.

\smallskip
\noindent
\underline{Step 1}: The case $d(B_k,\widetilde B)\leq s-10r$.

Using \eqref{eq:fspp2} and Davies-Gaffney estimates, it comes 
\begin{align*}
 &\sum_{d(B_k,\widetilde B)\leq s-10r} \|\cos(s\sqrt
H)(\chi_{B_k}.\psi(r^2H))\|_{L^2(B) \to L^2(\widetilde B)}\\
 \leq& \sum_{d(B_k,\widetilde B)\leq s-10r} \|\cos(s\sqrt H)\|_{L^2(B_k) \to
L^2(\widetilde B)}\|\psi(r^2H)\|_{L^2(B) \to L^2(B_k)} \\
 \lesssim& \sum_{d(B_k,\widetilde B)\leq s-10r} \left(\frac{r}{r+s}
\right)^{\frac{d-1}{2}} \left( 1+ \frac{s-d(B_k,\widetilde B)}{r}
\right)^{-\frac{d+1}{2}} e^{-\frac{d(B,B_k)^2}{4r^2}}.
\end{align*}
Note that $s-d(B_k,\widetilde B) \geq 10r\geq 0$. 

 We can evaluate the following sum
\begin{align} \label{eq:sum}
 \sum_{k}e^{-\frac{d(B,B_k)^2}{4r^2}}&\leq \sum_{l=0}^{+\infty} e^{-2^{2l}} \sharp
\{k, \frac{d(B,B_k)}{2r} \sim 2^l \} \lesssim \sum_{l=0}^{+\infty}
2^{ld}e^{-2^{2l}}<+\infty.
\end{align}
We distinguish two cases. If $s-d(\widetilde B,B_k)\geq \frac 12 |s-d(B,\widetilde
B)|=\frac{1}{2}|s-L|$ then
\begin{align*}
 \sum_{d(B_k,\widetilde B)\leq s-10r}& \left(\frac rs \right)^{\frac{d-1}{2}}
\left(1+ \frac{s-d(B_k,\widetilde B)}{r}
\right)^{-\frac{d+1}{2}}e^{-\frac{d(B,B_k)^2}{4r^2}} \\
 &\leq \sum_k \left(\frac rs \right)^{\frac{d-1}{2}} \left(1+ \frac{|s-L|}{2r}
\right)^{-\frac{d+1}{2}}e^{-\frac{d(B,B_k)^2}{4r^2}}\\
 &\lesssim \left(\frac rs \right)^{\frac{d-1}{2}} \left(1+ \frac{|s-L|}{r}
\right)^{-\frac{d+1}{2}}.
\end{align*}
If $s-d(\widetilde B,B_k)\leq \frac 12 |s-d(B,\widetilde B)|$ then $$d(B,B_k) \geq
|d(B_k,\widetilde B)-d(B,\widetilde B)|=|(s-L)-(s-d(\widetilde B,B_k))| \geq \frac12
|s-L|.$$
Hence
\begin{align*}
 \sum_{d(B_k,\widetilde B)\leq s-10r}& \left(\frac rs \right)^{\frac{d-1}{2}}
\left(1+ \frac{s-d(B_k,\widetilde B)}{r}
\right)^{-\frac{d+1}{2}}e^{-\frac{d(B,B_k)^2}{4r^2}} \\
 &\lesssim \left(\frac rs \right)^{\frac{d-1}{2}} \sum_{d(B_k,\widetilde B)\leq
s-10r} \left( 1+\frac{10r}{r}
\right)^{-\frac{d+1}{2}}e^{-\frac{d(B,B_k)^2}{8r^2}}\underbrace{e^{-\frac{d(B,B_k)^2}{8r^2}}}_{\leq
e^{-\frac{|s-L|^2}{16r^2}}} \\
 &\lesssim \left(\frac rs \right)^{\frac{d-1}{2}} \left(1+ \frac{|s-L|}{r}
\right)^{-\frac{d+1}{2}}
\end{align*}
because for every $x\geq 0$, $e^{-x^2} \lesssim \left( 1+x \right)^{-\alpha}$ for
all $\alpha>1$. 

\smallskip
\noindent
\underline{Step 2}: The case $s-10r \leq d(B_k,\widetilde B)\leq s+2r$ with an odd
dimension $d\geq 3$.

In this case, we have to use a sharp expression of the kernel of the wave
propagator. It is known that the behaviour of the kernel is different according to
the parity of the dimension.
Let us start with the case of an odd dimension $d\geq 3$.
In the Euclidean situation, we have an exact representation of the kernel (see
\cite{Fol} e.g.): for every $s\geq 0$ and every sufficiently smooth function $g$
\begin{align*}
 \cos(s\sqrt H)g(x)&= \partial_s\left(\frac 1s \partial_s\right)^{\frac{d-3}{2}}
\left( s^{d-2} \int_{|y|=1} g(x+sy)dy \right)\\
 &= \sum_{n=0}^{\frac{d-1}{2}} c_n s^n \int_{|y|=1}\partial^n_s (g(x+sy))dy,
\end{align*}
where $c_n$ are some numerical constants.

Consider $g= \chi_{B_k} \psi(r^2H)f$ then it satisfies the following regularity
estimates (with a slight abuse of notations): for every integer $n\geq 0$
$$|\partial_s^n(\chi_{B_k} (x+sy) \psi(r^2H)f(x+sy))|\lesssim \frac{1}{r^n}
\widetilde \chi_{B_k}(x+sy) \widetilde \psi(r^2H)f(x+sy).$$
Let us explain this point. Indeed, we can control the derivatives of $\chi_{B_k}$ by
\eqref{regularity}. It remains to explain the behaviour of the derivatives of
$\psi(r^2H)f(x+sy)$.
The kernel of the heat semigroup, for $t>0$, is $$p_t(x,y)=\frac{1}{(4\pi t)^{\frac
d2}}e^{-\frac{|x-y|^2}{4t}}.$$ 
Thus for all $r>0$: $$\partial_s(p_{r^2}(x+sy,z)) = \frac{1}{(4\pi r^2)^{\frac
d2}}e^{-\frac{|x+sy-z|^2}{4r^2}} \frac{(x+sy-z)y}{2r^2}.$$
Hence
\begin{align*}
 |\partial_s(p_{r^2}(x+sy,z)| &\lesssim \frac{1}{(4\pi r^2)^{\frac
d2}}\frac{1}{2r^2}e^{-\frac{|x-y|^2}{4r^2}}|x+sy-z|\\
 &=\frac 1r\frac{1}{(4\pi r^2)^{\frac
d2}}\frac{|x+sy-z|}{2r}e^{-\left(\frac{|x+sy-z|}{2r}\right)^2}\\
 &\lesssim \frac 1r \frac{1}{(4\pi r^2)^{\frac d2}} e^{-\frac{|x+sy-z|^2}{8r^2}},
\end{align*}
which means that, up to some numerical constants, the $n$th derivative of
$\psi(r^2H)f(x+sy)$ behaves as $\frac{1}{r^n} \psi(r^2H)f(x+sy)$ in the sense that
their kernels have both similar Gaussian pointwise decays. Such a property also
holds on a compact smooth Riemannian manifold.

So we have for $f\in L^2(B)$ a function supported on $B$, 
\begin{align*}
 \|\cos&(s\sqrt H)(\chi_{B_k}.\psi(r^2H)f)\|_{L^2(\widetilde B)} \\
 &\lesssim \sum_{n=0}^{\frac{d-1}{2}} \left( \frac sr \right)^n \int_{|y|=1}
\|\widetilde \chi_{B_k}(x+sy) \widetilde \psi(r^2H)f(\cdot+sy)\|_{L^2(\widetilde
B)} dy \\
 &\lesssim \sum_{n=0}^{\frac{d-1}{2}} \left( \frac sr \right)^n \int_{S(0,1)\cap A}
\left \| \widetilde \psi(r^2H)f \right \|_{L^2(B_k)} dy,
\end{align*}
where $S(0,1)$ is the unit sphere and $A=\frac 1s (B_k-\widetilde B)$.
Hence from the exponential decay of the kernel of $\widetilde \psi(r^2 H)$, we get
\begin{align*}
 \|\cos(s\sqrt H)(\chi_{B_k}.\psi(r^2H)f)\|_{L^2(\widetilde B)}  &\lesssim
\sum_{n=0}^{\frac{d-1}{2}} \left( \frac sr \right)^n |S(0,1)\cap A| e^{-c
\frac{d(B,B_k)^2}{r^2}} \|f\|_{L^2(B)} \\
 &\lesssim \|f\|_{L^2(B)} \sum_{n=0}^{\frac{d-1}{2}} \left( \frac rs \right)^{d-1-n}
e^{-c\frac{d(B,B_k)^2}{r^2}} \\
 &\lesssim \|f\|_{L^2(B)} \left( \frac rs \right)^{\frac{d-1}{2}}
e^{-c\frac{d(B,B_k)^2}{r^2}}
\end{align*}
where we have used that the $(d-1)$-dimensional volume of $S(0,1)\cap A=S(0,1) \cap
\frac 1s (B_k-\widetilde B)$ is equivalent to $\left(\frac{r}{s}\right)^{d-1}$ and
$\left( \frac rs \right)^{d-1-n} \leq \left( \frac rs \right)^{\frac{d-1}{2}}$.
Hence, it remains to evaluate the sum $$\sum_{s-10r\leq d(\widetilde B, B_k) \leq
s+2r} e^{-c\frac{d(B,B_k)^2}{r^2}}.$$
Since $$d(B,B_k) \geq |d(B,\widetilde B) - d(\widetilde B, B_k)|-2r \geq |L-s|-4r.$$
Then $$|L-s|^2 \leq 2(d(B,B_k)^2+16r^2)$$ that is $$d(B,B_k)^2 \geq
\frac{|L-s|^2}{2}-16r^2.$$
Thus, we deduce
\begin{align*}
 \sum_{\underset{s-10r\leq d(\widetilde B, B_k) \leq s+2r}{B_k}}
e^{-c\frac{d(B,B_k)^2}{r^2}} &\leq \sum_{B_k} e^{-c\frac{d(B,B_k)^2}{2r^2}}
e^{-c\frac{|L-s|^2}{2r^2}}\\
 &\lesssim \left(1+\frac{|L-s|}{r} \right)^{-\frac{d+1}{2}}.
\end{align*}
In the end, we have obtained that 
$$\sum_{s-10r\leq d(\widetilde B, B_k) \leq s+2r} \|\cos(s\sqrt
H)(\chi_{B_k}.\psi(r^2H)f)\|_{L^2(\widetilde B)} \lesssim \left( \frac rs
\right)^{\frac{d-1}{2}} \left(1+\frac{|L-s|}{r} \right)^{-\frac{d+1}{2}}
\|f\|_{L^2(B)}$$
which gives the desired estimate (for an odd dimension).

\smallskip
\noindent
\underline{Step 3}: The case $s-10r \leq d(B_k,\widetilde B)\leq s+2r$ with an even
dimension $d\geq2$.

In this case the wave propagator is given by
\begin{align*}
 \cos(s\sqrt H)g(x)&= \partial_s\left(\frac 1s \partial_s\right)^{\frac{d-2}{2}}
\left( s^{d-1} \int_{|y|<1} g(x+sy)\frac{dy}{\sqrt{1-|y|^2}} \right)\\
 &= \sum_{n=0}^{\frac{d}{2}} c_n s^n
\int_{|y|<1}\partial_s(g(x+sy))\frac{dy}{\sqrt{1-|y|^2}},
\end{align*}
with some numerical constants $c_n$.
The same arguments as above give $$\|\cos(s\sqrt H)
(\chi_{B_k}.\psi(r^2H)f)\|_{L^2(\widetilde B)} \lesssim
\|f\|_{L^2(B)}\sum_{n=0}^{\frac d2} \left( \frac sr \right)^n
e^{-c\frac{d(B_k,B)^2}{r^2}} \int_{y\in A \cap B(0,1)} \frac{dy}{\sqrt{1-|y|^2}},$$
where $A=\frac 1s (B_k-\widetilde B)$.
Moreover
\begin{align*}
 \int_{A \cap B(0,1)} \frac{dy}{\sqrt{1-|y|^2}} &\leq \int_{A\cap B(0,1-\frac
rs)}\frac{dy}{\sqrt{1-|y|^2}} +\int_{1-\frac rs}^1 |S(0,\rho) \cap A|
\frac{d\rho}{\sqrt{1-\rho^2}}\\
 &\leq \int_{A\cap B(0,1)} \left(\frac sr \right)^{\frac 12} dy+ \left(\frac rs
\right)^{d-1} \int_{1-\frac rs}^1 \frac{d\rho}{\sqrt{1-\rho}}\\
 &\lesssim \left(\frac rs \right)^{d-\frac 12}+\left(\frac rs \right)^{d-1}
\left[\sqrt{1-\rho}\right]_{1-\frac rs}^1 \lesssim \left(\frac rs \right)^{d-\frac
12}.
\end{align*}
Hence,
\begin{align*} 
 \sum_{s-10r\leq d(\widetilde B, B_k) \leq s+2r} \|\cos(s\sqrt H)
(\chi_{B_k}.\psi(r^2H)f)\|_{L^2(\widetilde B)} &\lesssim \sum_{B_k} \left( \frac rs
\right)^{\frac{d-1}{2}} e^{-c\frac{d(B,B_k)^2}{r^2}}\|f\|_{L^2(B)} \\
 &\lesssim \left( \frac rs \right)^{\frac{d-1}{2}} \left( 1+
\frac{|s-L|}{r}\right)^{-\frac{d+1}{2}} \|f\|_{L^2(B)},
\end{align*}
which gives the desired estimate.

Note that since $r\lesssim s$ we have $\frac rs \lesssim \frac{r}{r+s}$ so in any
dimension $d>1$:
 $$\|\cos(s\sqrt H)\psi(r^2H)\|_{L^2(B) \to L^2(\widetilde B)} \lesssim \left(
\frac{r}{r+s} \right)^{\frac{d-1}{2}}
\left(1+\frac{|L-s|}{r}\right)^{-\frac{d+1}{2}}.$$
\end{proof}


\appendix

\section{Wave propagation in the Euclidean setting}\label{section_appendice_1}

In this appendix, we aim to check \eqref{eq:fspp2} in the Euclidean situation, from
the exact and global formula giving the wave operators.
Let us consider the Euclidean space $X=\mathbb R^d$, equipped with its canonical
structure and $H=-\Delta$.
 
\begin{prop}\label{propagateur_s} For every balls $B_r, \widetilde{B_r}$ of radius
$r>0$ and every $s>0$, if $L:=d(B_r,\widetilde{B_r})  \leq s-10r$ then
 $$\|\cos(s\sqrt H)\|_{L^2(B_r) \to L^2(\widetilde{B_r})} \lesssim \left(
\frac{r}{r+s} \right)^{\frac{d-1}{2}} \left( 1+
\frac{|L-s|}{r}\right)^{-\frac{d+1}{2}}.$$
\end{prop}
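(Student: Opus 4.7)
The plan is to leverage the explicit Euclidean formulas for the wave propagator, treating the cases of odd and even dimension separately, since the strong Huygens principle holds only in odd dimensions $d\geq 3$.

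First I would observe that $L \leq s - 10r$ with $L \geq 0$ forces $s \geq 10r$, so $s + r \simeq s$ and the target bound reduces, up to constants, to $r^d / [s^{(d-1)/2}(s-L)^{(d+1)/2}]$. Moreover, for any $x \in \widetilde{B_r}$ and $y \in B_r$, one has $|x-y| \in [L, L+2r] \subset [L, s-8r]$; this geometric separation is the heart of the argument.

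For odd $d \geq 3$ I would invoke the strong Huygens principle: the distributional kernel of $\cos(s\sqrt H)$ is supported on the sphere $\{z : |z| = s\}$. Since this sphere is disjoint from the admissible range of $|x-y|$ above, the operator vanishes identically from $L^2(B_r)$ into $L^2(\widetilde{B_r})$ and the estimate is trivial. The case $d = 1$, where $\cos(s\sqrt H)f(x) = \tfrac12(f(x+s) + f(x-s))$, is handled analogously.

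For even $d \geq 2$, working first with smooth, compactly supported $f$ and then passing to the limit by density (since $\cos(s\sqrt H)$ is $L^2$-bounded), I would use the explicit formula recalled in the proof of Proposition \ref{prop:ass}. The change of variables $z = sy$ turns the inner bracket into
$$\Phi(s,x) := \int_{|z|<s} \frac{f(x+z)}{\sqrt{s^2-|z|^2}}\,dz,$$
so that $\cos(s\sqrt H)f = \partial_s (s^{-1}\partial_s)^{(d-2)/2} \Phi$. For $x \in \widetilde{B_r}$, the integrand in $\Phi$ is supported in $|z| \in [L, L+2r] \subset [L, s-8r]$, i.e. safely inside $\{|z|<s\}$ and away from its boundary. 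This justifies differentiating under the integral; using $(s^{-1}\partial_s)^k(s^2-|z|^2)^{-1/2} = c_k (s^2-|z|^2)^{-1/2-k}$, one arrives at
$$\cos(s\sqrt H)f(x) = c'_d\, s \int_{\{z\,:\,x+z \in B_r\}} \frac{f(x+z)}{(s^2-|z|^2)^{(d+1)/2}}\,dz.$$
On this effective support $s - |z| \geq s - L - 2r \gtrsim s - L$ (since $s - L \geq 10r$) and $s + |z| \simeq s$, hence $s^2 - |z|^2 \gtrsim s(s-L)$. Combining this pointwise control of the kernel with Cauchy--Schwarz on $B_r$ yields
$$|\cos(s\sqrt H)f(x)| \lesssim \frac{s \cdot r^{d/2}}{[s(s-L)]^{(d+1)/2}} \|f\|_{L^2(B_r)},$$
and integrating the square over $\widetilde{B_r}$ (whose volume is $\simeq r^d$) recovers exactly the claimed bound $r^d/[s^{(d-1)/2}(s-L)^{(d+1)/2}]$.

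The main obstacle I anticipate is rigorously justifying the interchange of $s$-derivatives with the integral for $L^2$ data, since for $k$ large the factor $(s^2-|z|^2)^{-1/2-k}$ ceases to be locally integrable near $|z|=s$; this difficulty is entirely resolved by the geometric hypothesis $L + 2r \leq s - 8r$, which confines the integrand to a region compact in $\{|z|<s\}$, together with a standard density argument.
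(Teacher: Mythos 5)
Your proof is correct and follows essentially the same approach as the paper: explicit Euclidean representation formulas, a case split on the parity of $d$, strong Huygens for odd $d$, and in the even case the key geometric fact that $L\leq s-10r$ keeps the effective integration region a distance $\gtrsim s-L$ away from the singular sphere, which both kills boundary terms and controls the kernel singularity. The two proofs differ only in the even-dimensional computation: the paper stays in the $|y|<1$ variables, integrates by parts via Green's formula to derive a recursion on the auxiliary integrals $I_{n,m}$, then closes with Minkowski's integral inequality applied to $\|f(\cdot+sy)\|_{L^2}$; you instead substitute $z=sy$ first, which absorbs the $s^{d-1}$ prefactor into the kernel $(s^2-|z|^2)^{-1/2}$, lets you differentiate the kernel directly, and produces the single closed-form expression $\cos(s\sqrt H)f(x)=c'_d\,s\int f(x+z)(s^2-|z|^2)^{-(d+1)/2}\,dz$, which you bound pointwise and then Cauchy--Schwarz. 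Both routes yield the same kernel power $(s^2-|z|^2)^{-(d+1)/2}$ and the same final bound $r^d/\bigl[s^{(d-1)/2}(s-L)^{(d+1)/2}\bigr]$; yours is arguably cleaner since it avoids the $I_{n,m}$ bookkeeping. One harmless slip: with $L=d(B_r,\widetilde{B_r})$ the range of $|x-y|$ is $[L,L+4r]$ rather than $[L,L+2r]$, but the argument only needs $|x-y|\leq s-6r<s$, so nothing is affected.
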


\begin{proof}
 Let $f \in L^2(B_r)$.
 If $d\geq 3$ is odd then the wave propagator is given by
 $$\cos(s\sqrt H)f(x) = \sum_{n=0}^{\frac{d-1}{2}} c_ns^n
\int_{|y|=1}\partial_s(f(x+sy))\, dy,$$ for some numerical constants $c_n$.
 If $x\in \widetilde{B_r}$ and $x+sy \in B_r$ then $y=\frac{x+sy-x}{s} \in
\frac{B-\widetilde B}{s}$ hence
 \begin{equation}\label{support}
  |y| \leq \frac{d(B,\widetilde B) +2r}{s} \leq \frac{s-8r}{s} < 1.
 \end{equation}
 Thus $$\cos(s\sqrt H)f(x)=0.$$
 If $d \geq 2$ is even then the wave propagator is given by
  $$\cos(s\sqrt H)f(x)= \partial_s\left(\frac 1s \partial_s\right)^{\frac{d-2}{2}}
\left( s^{d-1} \int_{|y|<1} f(x+sy)\, \frac{dy}{\sqrt{1-|y|^2}} \right).$$
 Set $$I_{n,m}:=\int_{|y|<1} f(x+sy) \frac{|y|^{2m}}{(1-|y|^2)^n}\, dy.$$
 Since $$\cos(s\sqrt H)f(x)=\partial_s\left(\frac 1s
\partial_s\right)^{\frac{d-2}{2}} \left( s^{d-1} I_{\frac 12,0} \right)$$ we want
to evaluate 
 $$\partial_s I_{n,m}=\int_{|y|<1} \nabla f(x+sy).y \frac{|y|^{2m}}{(1-|y|^2)^n} \,
dy.$$
 By \eqref{support} the boundary term in Green's formula vanishes and so
$$\partial_s I_{n,m} = -\int_{|y|<1} \frac{f(x+sy)}{s}  \nabla\cdot
\left(\frac{y|y|^{2m}}{(1-|y|^2)^n}\right)\, dy.$$
 Consequently, it comes with numerical constants $\alpha_{n,m},\alpha_{n+1,m+1}$
  $$\partial_s I_{n,m} = \frac 1s (\alpha_{n,m}I_{n,m} +
\alpha_{n+1,m+1}I_{n+1,m+1}).$$
It follows that (with other coefficients but for simplicity we keep the same
notations) 
 $$\left( \frac 1s \partial_s \right) (s^{d-1} I_{n,m}) = s^{d-3}
(\alpha_{n,m}I_{n,m} + \alpha_{n+1,m+1}I_{n+1,m+1}).$$
 By iterating, we deduce that for $n=\frac{1}{2}$ and $m=0$
 $$\left( \frac 1s \partial_s \right)^{\frac{d-2}{2}}(s^{d-1} I_{\frac 12,0}) =
s^{d-1-(d-2)} (\alpha_{\frac 12,0}I_{\frac 12,0} + \cdots + \alpha_{\frac 12 +
\frac{d-2}{2},\frac{d-2}{2}}I_{\frac 12 + \frac{d-2}{2},\frac{d-2}{2}}).$$
 Hence,
  $$\cos(s\sqrt H) f(x) = \alpha_{\frac 12,0}I_{\frac 12,0} + \cdots +
\alpha_{\frac{d+1}{2},\frac d2}I_{\frac{d+1}{2},\frac d2},$$
where coefficients $\alpha_{n,m}$ are some numerical constants, possibly changing
from line to line.

 Since $\frac{1}{1-|y|^2} \geq 1$ and $|y|\leq 1$ we have: 
 \begin{align*}
  \|\cos(s\sqrt H)f\|_{L^2(\widetilde{B_r})} &  \lesssim \left \| \int_{|y|<1}
|f(x+sy)| \, \frac{dy}{(1-|y|^2)^{\frac{d+1}{2}}} \right \|_{L^2(\widetilde{B_r})}
\\
  &\lesssim \int_{B(0,1) \cap A} \|f(\cdot + sy)\|_{L^2(\widetilde{B_r})} \,
\frac{dy}{((1+|y|)(1-|y|))^{\frac{d+1}{2}}}
 \end{align*}
 where $A:=\frac 1s (B_r-\widetilde{B_r})$ so that $|y| \geq
\frac{d(B_r,\widetilde{B_r})-2r}{s}$. Moreover 
 $$\|f(\cdot +sy)\|_{L^2(\widetilde{B_r})} \leq \|f\|_{L^2(B_r)}.$$
 Hence: 
 \begin{align*}
  \|\cos(s\sqrt H)\|_{L^2(B) \to L^2(\widetilde B)} &\lesssim \frac{1}{\left(
1-\frac{L-2r}{s} \right)^{\frac{d+1}{2}}} |B(0,1) \cap A| \\
  &\lesssim \left( \frac{s-L+2r}{s} \right)^{-\frac{d+1}{2}} \left( \frac rs
\right)^d \lesssim \left( 1+ \frac{|s-L|}{r}\right)^{-\frac{d+1}{2}} \left( \frac
rs \right)^{\frac{d-1}{2}}\\
  &\lesssim \left( \frac{r}{r+s} \right)^{\frac{d-1}{2}} \left( 1+
\frac{|s-L|}{r}\right)^{-\frac{d+1}{2}},
 \end{align*}
where the last inequality holds if $r\leq s$. If $s\leq r$ then use $|B(0,1) \cap A|
\leq |B(0,1)| \lesssim 1$ to get the same estimation with $1\lesssim \frac{r}{r+s}$
instead of $\frac rs$.
\end{proof}


\small{

}\end{document}